\newcommand{\ee}{\textrm{e}}
\newcommand{\ii}{\textrm{i}}
\newcommand{\dd}{\textrm{d}}
\newtheorem{theorem}{Theorem}
\newtheorem{proposition}{Proposition}
\newtheorem{corollary}{Corollary}
\newtheorem{lemma}{Lemma}
\newtheorem{proof}{Proof}
\newtheorem{remark}{Remark}
\newtheorem{example}{Example}
\newtheorem{assumption}{Assumption}
\numberwithin{equation}{section}
\numberwithin{theorem}{section}
\numberwithin{proposition}{section}
\numberwithin{example}{section}
\numberwithin{remark}{section}
\numberwithin{proof}{section}
\title{Asymptotics of matrix orthogonal polynomials on the real line}
\author{
Alfredo Dea\~{n}o\footnote{Universidad Carlos III de Madrid, https://ror.org/03ths8210, Departamento de Matem\'aticas, Avenida de la Universidad, 30 (edificio Sabatini), 28911 Legan\'es (Madrid), Espa\~{n}a. Email: alfredo.deanho@uc3m.es} , 
Pablo Rom\'an\footnote{CIEM-CONICET and FaMAF-Universidad Nacional de C\'ordoba, Ciudad Universitaria, C\'ordoba, Argentina;
Guangdong Technion Israel Institute of Technology, 241 Daxue Road, Jinping District, Shantou, Guandong Province, China. Email: pablo.roman@unc.edu.ar}}
\date{\today}
\begin{document}
\maketitle

\begin{abstract}
In this paper, we are interested in matrix valued orthogonal polynomials on the real line with respect to exponential weights. We obtain strong asymptotics as the degree tends to infinity in different regions of the complex plane, as well as asymptotic behavior of recurrence coefficients and norms. The main tools are the Riemann--Hilbert formulation and the Deift--Zhou method of steepest descent, adapted to the matrix case. A central role is played by the matrix Szeg\H{o} function, an object that has independent interest.
\end{abstract}
\textbf{Keywords:} matrix orthogonal polynomials, Riemann--Hilbert problems, asymptotic ana\-ly\-sis, special functions.\\
\textbf{MSC classification numbers:} 33C45, 34M50, 30E15, 41A60

\section{Introduction}

In this paper, we study matrix orthogonal polynomials (MVOPs in the sequel) with respect to exponential weight functions on the real line. More precisely, we consider an $r\times r$ weight of the following form:
\begin{equation}\label{eq:Wx}
W_N(x)
=
\ee^{-Nv(x)} M(x)
\qquad
v(x)
=
x^{2m}+\sum_{j=0}^{2m-1}v_jx^j, \qquad m\geq 1.
\end{equation}
where $M(x)$ is an $r\times r$ Hermitian positive definite matrix 
on the real line, and $N>0$ is a parameter.

We can write the matrix part of the weight in the following form:
\begin{equation}\label{eq:MQQ}
M(x)=Q(x)Q(x)^\ast,
\end{equation}
where $Q(x)$ is a matrix valued function, that is analytic for $x\in\mathbb{R}$ and admits an analytic extension $Q(z)$ to a neighborhood of $\mathbb{R}$. 

A particularly important example is given by $Q(x)=\ee^{Ax}$, where the matrix $A$ is of size $r\times r$, nilpotent and with the following structure:
\begin{equation}\label{eq:A}
A
=
\begin{pmatrix}
    0 & 0 & \ldots & \ldots & 0\\
    \alpha_1 & 0 &\ddots & &\vdots\\
    0 & \alpha_2 & 0 &\ddots & \vdots \\
    \vdots & \ddots & \ddots & \ddots & 0 \\
    0 &\ldots & 0 & \alpha_{r-1} & 0
\end{pmatrix}
\end{equation}
with $\alpha_j\in\mathbb{R}$ for $j=1,\ldots,r-1$. A simpler case is obtained by letting $\alpha_j=\alpha$ for all $j$. 

Given this weight function, we construct monic MVOPs $P_{n,N}(x)=x^n I_r+\ldots$, which depend on the two parameters $n,N$ and the variable $x$ and satisfy the orthogonality conditions
\begin{equation}\label{eq:MVOPs}
\int_{\mathbb{R}} P_{n,N}(x)W_N(x)P_{m,N}(x)^\ast \dd x
=
\delta_{n,m}\mathcal{H}_{n,N},
\end{equation}
where $\mathcal{H}_{n,N}$ is a symmetric positive definite matrix and $\delta_{n,m}$ is Kronecker's delta. 

In the case of Hermite scalar factor, that is $v(x)=x^2$, the corresponding MVOPs appear in the context of differential equations of order $2$ with matrix coefficients having MVOPs as eigenfunctions in the papers \cite{DG2004, DG2005,DG2007} by Dur\'an and Gr\"{u}nbaum, see also \cite[Chapter 12]{AB2020} for a recent overview. In the $2\times 2$ Hermite case, the MVOPs can be written directly in terms of scalar Hermite polynomials, and many important properties (such as Rodrigues formula, recurrence coefficients etc) can be obtained explicitly. The Hermite and Freud (when $v(x)=x^4$) cases are also studied by Gr\"{u}nbaum, D. de la Iglesia and Mart\'inez--Finkelshtein in \cite{GdIM2012}, as well as by Cassatella--Contra and Ma\~{n}as in \cite{CCM2012}. In these references, the authors present the Riemann--Hilbert characterization of the MVOPs on the real line (that we will use later), with a particular interest in the algebraic and differential identities that these MVOPs satisfy. Such identities are also considered in \cite{DER2021}, in the light of the recent paper by Casper and Yakimov \cite{CY2022}, they correspond to non-commutative versions of known identities in integrable systems, such as Toda lattice or Painlev\'e difference equations.

In this paper, we present asymptotic results for MVOPs $P_{n,N}(x)$ with respect to a matrix weight of the form \eqref{eq:Wx}, as $N$ and the degree $n$ tend to infinity. 

\begin{assumption}\label{assumption1}
\normalfont
In our asymptotic analysis we make the following assumptions on the matrix weight function $W_N(x)$:
    \begin{itemize}
        \item The potential $v(x)$ is such that the corresponding equilibrium measure is supported on a single interval of the real line, that we denote $[a,b]$. This is true for instance is $v(x)$ is convex, and a standard example is given by the monomial $v(x)=x^{2m}$, with $m\geq 1$. The case $m=1$ gives the classical Hermite weight. This interval $[a,b]$ can be determined using the Mhaskar--Rakhmanov--Saff (MRS) numbers, which in this setting solve the following equations:
        \begin{equation}\label{eq:MRS}
        \frac{1}{2\pi} \int_{a}^{b} v'(s)\sqrt{\frac{s-a}{b-s}}\dd s=1, \qquad
        \frac{1}{2\pi} \int_{a}^{b} v'(s)\sqrt{\frac{b-s}{s-a}}\dd s=-1,
        \end{equation}
we refer the reader to \cite{DKMVZ1999,LL2001,ST1997}. 

From the MRS numbers \eqref{eq:MRS}, we define the following parameters:
\begin{equation}\label{eq:ctdt}
c=\frac{b-a}{2},\qquad d=\frac{b+a}{2},
\end{equation}

\item The matrix part of the weight $M(x)=Q(x)Q(x)^\ast$ in \eqref{eq:MQQ} 
remains independent of $N$. 
\end{itemize}
\end{assumption}

As a consequence of the second assumption, several steps of the steepest descent analysis can be directly adapted from the literature on the scalar case, see for example \cite{DKMVZ1999,KT2009}. One important exception is the construction of the global parametrix, which requires a matrix Szeg\H{o} function. Such a Szeg\H{o} function is an essential tool in the asymptotic analysis of MVOPs on the interval $[-1,1]$, presented recently in \cite{DKR2023}. For exponential weights on the real line as \eqref{eq:Wx}, the main idea is similar, and in the case $Q(x)=\ee^{Ax}$, the construction can be carried out algorithmically, following ideas in \cite{ephremidze_2014}. We also note that in this context, unlike the situation studied in \cite{DKR2023}, there is no need for spectral decomposition of the weight function.

\section{Riemann--Hilbert problem}\label{sec:RHid}
These MVOPs arise as solution of a RH problem of size $2r\times 2r$, we refer the reader to \cite{CCM2012,GdIM2012}: we seek $Y(z)=Y(z,n,N,\{\alpha_j\}):\mathbb{C}\mapsto\mathbb{C}^{2r\times 2r}$ such that
\begin{enumerate}
\item $Y(z)$ is analytic in $\mathbb{C}\setminus \mathbb{R}$.
\item For $x\in\mathbb{R}$, the matrix $Y(z)$ has continuous boundary values $Y_{\pm}(x)=\lim_{\varepsilon\to 0}Y(x\pm\ii\varepsilon)$, that satisfy the following jump condition:
\begin{equation}
Y_+(x)
=
Y_-(x)
\begin{pmatrix}
I_r & W_N(x)\\
0_r & I_r
\end{pmatrix},
\qquad W_N(x)=\ee^{-Nv(x)}M(x).
\end{equation}
Here and in the sequel, $0_r$ and $I_r$ indicate the zero and identity matrix of size $r\times r$, respectively.
\item As $z\to\infty$, we have the asymptotic behavior
\begin{equation}\label{eq:asympYN}
Y(z)
=
\left(I_{2r}+\frac{Y^{(1)}}{z}+\frac{Y^{(2)}}{z^2}+\mathcal{O}(z^{-2})\right) 
\begin{pmatrix} z^n I_r & 0_r\\ 0_r &  z^{-n} I_r\end{pmatrix}
\end{equation}
\end{enumerate}

\begin{remark}\label{rem:sigma3}
\normalfont
    In the sequel, we will use the standard notation with the Pauli $\sigma_3$ matrix, but understood by blocks in this matrix setting. That is, given $f(z)\neq 0$, we write
    \begin{equation}
        f(z)^{\sigma_3}
        :=
        \begin{pmatrix} f(z) I_r & 0_r\\ 0_r &  \frac{1}{f(z)} I_r\end{pmatrix}.
    \end{equation}
\end{remark}

The solution of this Riemann--Hilbert problem is given by the following matrix (written in $r\times r$ blocks):
\begin{equation}\label{eq:solRHPY}
Y(z)
=
\begin{pmatrix}
P_{n,N}(z) & \mathcal{C}(P_{n,N}W_N)(z)\\
-2\pi \ii \mathcal{H}_{n-1,N}^{-1}P_{n-1,N}(z) & -2\pi \ii \mathcal{H}_{n-1,N}^{-1}\mathcal{C}(P_{n-1,N}W_N)(z)
\end{pmatrix},
\end{equation}
where 
\begin{equation}
\mathcal{C}(f)(z)=\frac{1}{2\pi\ii}\int_{\mathbb{R}}\frac{f(s)}{s-z}\dd s
\end{equation}
is a standard Cauchy transform (taken entrywise), and the norm $\mathcal{H}_{n,N}$ is given by \eqref{eq:MVOPs}.

As a direct consequence of orthogonality \eqref{eq:MVOPs}, the  MVOPs $P_{n,N}(x)$ satisfy a three term recurrence relation 
\begin{equation}\label{eq:TTRRN}
zP_{n,N}(z)
=
P_{n+1,N}(z)
+B_{n,N} P_{n,N}(z)
+C_{n,N} P_{n-1,N}(z),
\end{equation}
where the coefficients $B_{n,N}$ and $C_{n,N}$ are $r\times r$ matrices. For simplicity, in the sequel we omit writing the dependence on $N$ explicitly, and we understand $P_n(z)=P_{n,N}(z)$ etc.

It is well known (see for example \cite[\S 4.1]{GdIM2012}) that these recurrence coefficients can be written in terms of the entries of the matrices $Y^{(1)}$ and $Y^{(2)}$ that appear in the large $z$ asymptotic expansion \eqref{eq:asympYN} of the Riemann--Hilbert problem. We write these formulas below for completeness, but also noting that the second expression for $B_{n}$, which is analogous to the one in \cite[Theorem 3.1]{DKMVZ1999} for the scalar case, appears to be new in the matrix setting.

\begin{proposition}
\normalfont
The recurrence coefficients $B_{n}$ and $C_{n}$ can be written as follows:
\begin{equation}\label{eq:BnCn}
B_{n}=Y^{(1)}_{n,11}-Y^{(1)}_{n+1,11}, \qquad
C_{n}=Y_{n,12}^{(1)}Y_{n,21}^{(1)}.
\end{equation}
Furthermore, the coefficient $B_{n}$ admits the following reformulation:
\begin{equation}\label{eq:BnY1Y2}
    B_{n}
=
Y_{n,11}^{(1)}
-
\left(Y_{n,12}^{(2)}\right)^\ast
\left(Y_{n,12}^{(1)}\right)^{-1}.
\end{equation}
\end{proposition}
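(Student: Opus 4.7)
The plan is to match coefficients in the expansion of $Y(z)$ at infinity, using the explicit form \eqref{eq:solRHPY} of the Riemann--Hilbert solution.

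First, I would read off the relevant blocks of $Y_n^{(1)}$ and $Y_n^{(2)}$ directly from \eqref{eq:solRHPY}. The $(1,1)$ block gives $P_n(z)=z^nI+Y_{n,11}^{(1)}z^{n-1}+Y_{n,11}^{(2)}z^{n-2}+\cdots$; the $(2,1)$ block expansion at infinity immediately yields $Y_{n,21}^{(1)}=-2\pi\ii\,\mathcal{H}_{n-1}^{-1}$ since $P_{n-1}$ is monic; and expanding the Cauchy kernel in $\mathcal{C}(P_nW_N)(z)$ as a geometric series in $1/z$ gives
\[
\mathcal{C}(P_nW_N)(z)=-\frac{1}{2\pi\ii}\sum_{k\ge 0}\frac{1}{z^{k+1}}\int_{\mathbb{R}} s^kP_n(s)W_N(s)\,\dd s,
\]
whose leading nonzero coefficient, by the orthogonality relations \eqref{eq:MVOPs} (using that $s^nI_r$ equals $P_n(s)^{\ast}$ plus lower-degree adjoint polynomials on the right), is $Y_{n,12}^{(1)}=-\mathcal{H}_n/(2\pi\ii)$. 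The two formulas in \eqref{eq:BnCn} then follow by routine algebra: substituting $P_{n\pm 1}$ and $P_n$ into \eqref{eq:TTRRN} and identifying the coefficient of $z^n$ gives $B_n=Y_{n,11}^{(1)}-Y_{n+1,11}^{(1)}$, while $C_n=\mathcal{H}_n\mathcal{H}_{n-1}^{-1}$ (a standard consequence of $\int sP_nW_NP_{n-1}^{\ast}\,\dd s$ computed two ways) factors as $Y_{n,12}^{(1)}Y_{n,21}^{(1)}$.

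For the new formula \eqref{eq:BnY1Y2}, the idea is to refine the Cauchy-transform expansion one order further and then invoke orthogonality against $P_{n+1}$. The next coefficient is
\[
Y_{n,12}^{(2)}=-\frac{1}{2\pi\ii}\int_{\mathbb{R}} s^{n+1}P_n(s)W_N(s)\,\dd s.
\]
To identify this, I would use $\int P_{n+1}W_NP_n^{\ast}\,\dd s=0$ together with the monic expansion $P_{n+1}(s)=s^{n+1}I_r+Y_{n+1,11}^{(1)}s^n+O(s^{n-1})$. Because $W_N$ is Hermitian, $\int s^k W_N P_n^{\ast}\,\dd s=\bigl(\int s^kP_nW_N\,\dd s\bigr)^{\ast}$, and the lower-order terms ($k<n$) vanish by orthogonality, while the $s^n$ term contributes $\mathcal{H}_n^{\ast}=\mathcal{H}_n$. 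This collapses the orthogonality identity to
\[
\Bigl(\textstyle\int s^{n+1}P_nW_N\,\dd s\Bigr)^{\ast}+Y_{n+1,11}^{(1)}\mathcal{H}_n=0,
\]
which, after taking adjoints and using $\mathcal{H}_n=\mathcal{H}_n^{\ast}$, gives $\int s^{n+1}P_nW_N\,\dd s=-\mathcal{H}_n(Y_{n+1,11}^{(1)})^{\ast}$, hence $(Y_{n,12}^{(2)})^{\ast}=Y_{n+1,11}^{(1)}Y_{n,12}^{(1)}$ (using the purely imaginary scalar factor in $Y_{n,12}^{(1)}$ to absorb the sign).

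The conclusion follows immediately: multiplying on the right by $(Y_{n,12}^{(1)})^{-1}$ gives $(Y_{n,12}^{(2)})^{\ast}(Y_{n,12}^{(1)})^{-1}=Y_{n+1,11}^{(1)}$, and substituting into the first formula for $B_n$ yields \eqref{eq:BnY1Y2}. The only delicate point is step three, where one must carefully track whether the matrix coefficients from the three-term recurrence act on the left or on the right when adjoints are taken; the Hermiticity of $W_N$ and of $\mathcal{H}_n$, together with the fact that $Y_{n,12}^{(1)}$ is a purely imaginary scalar multiple of a Hermitian matrix, makes everything align at the end.
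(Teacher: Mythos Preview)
Your proposal is correct and follows essentially the same route as the paper: both identify $Y_{n,12}^{(1)}=-\mathcal{H}_n/(2\pi\ii)$, $Y_{n,21}^{(1)}=-2\pi\ii\,\mathcal{H}_{n-1}^{-1}$, and then compute $Y_{n,12}^{(2)}$ via one further term of the Cauchy expansion together with orthogonality against $P_{n+1}$, using Hermiticity of $W_N$ and $\mathcal{H}_n$ (equivalently $(Y_{n,12}^{(1)})^{\ast}=-Y_{n,12}^{(1)}$) to convert into \eqref{eq:BnY1Y2}. The only cosmetic difference is that the paper expands $s^{n+1}I_r$ in terms of $(P_{n+1})^{\ast},(P_n)^{\ast},\dots$ inside $\int P_nW(\cdot)\,\dd s$, whereas you start from $\int P_{n+1}W P_n^{\ast}\,\dd s=0$ and expand $P_{n+1}$; these are the same computation reorganized.
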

 
\begin{proof}
\normalfont
The proof of the formula for $B_{n}$ in \eqref{eq:BnCn} follows directly from equating terms multiplying $z^n$ in the recurrence relation \eqref{eq:TTRRN}, and the fact that if $P_n(z)=z^n I_r+X_{n,n-1}z^{n-1}+\ldots$, then the subleading coefficient is $X_{n,n-1}=Y_{11}^{(1)}$, from \eqref{eq:asympYN}. 

Regarding the formula for $C_{n}$, we have $C_{n}=\mathcal{H}_n\mathcal{H}_{n-1}^{-1}$, in terms of norms in \eqref{eq:MVOPs} and directly from orthogonality. On the other hand, we have
\begin{equation}\label{eq:Y12infty}
\begin{aligned}
    Y_{n,12}(z)
    &=
    \frac{1}{2\pi\ii}\int_{\mathbb{R}}\frac{P_n(s)W(s)}{s-z}\dd s\\
    &=
    -\frac{1}{2\pi\ii z^{n+1}}\int_{\mathbb{R}}
    P_n(s)W(s)s^n\dd s
    -\frac{1}{2\pi\ii z^{n+2}}\int_{\mathbb{R}}
    P_n(s)W(s)s^{n+1}\dd s+\mathcal{O}(z^{-n-3}),
\end{aligned}
\end{equation}
as $z\to\infty$, $z\notin\mathbb{R}$, using orthogonality. Then, 
\begin{equation}
\begin{aligned}
    \int_{\mathbb{R}}
    P_n(s)W(s)s^n\dd s
    &=
    \int_{\mathbb{R}}
    P_n(s)W(s)(s^n I_r)^\ast\dd s\\
    &=
    \int_{\mathbb{R}}
    P_n(s)W(s)\left(P_n(s)-X_{n,n-1}\left(P_{n-1}(s)-\ldots\right)\right)^\ast\dd s\\
    &=
    \mathcal{H}_n,
\end{aligned}
\end{equation}
which means that $Y^{(1)}_{12}=-(2\pi\ii)^{-1}\mathcal{H}_{n}$. Also, since $Y^{(1)}_{21}(z)=-2\pi\ii \mathcal{H}_{n-1} \left(z^{n-1}+\ldots\right)$, we obtain $Y_{21}^{(1)}=-2\pi\ii \mathcal{H}_{n-1}^{-1}$, and that leads to \eqref{eq:BnCn} for $C_{n}$.

In order to prove \eqref{eq:BnY1Y2}, we examine the next term in the asymptotic expansion \eqref{eq:Y12infty}, and we observe that 
\begin{equation}
\begin{aligned}
    Y_{12}^{(2)}
    &=-\frac{1}{2\pi\ii}
    \int_{\mathbb{R}}
    P_n(s)W(s)s^{n+1}\dd s\\
    &=
    -\frac{1}{2\pi\ii}
    \int_{\mathbb{R}}
    P_n(s)W(s)(s^{n+1})^\ast\dd s\\
    &=
    -\frac{1}{2\pi\ii}
    \int_{\mathbb{R}}
    P_n(s)W(s)\left(P_{n+1}(s)-X_{n+1,n}\left(P_{n}(s)-\ldots\right)\right)^\ast\dd s=
    \frac{1}{2\pi\ii}\mathcal{H}_n X_{n+1,n}^\ast.
\end{aligned}
\end{equation}

Since $\mathcal{H}_n=-2\pi\ii Y_{n,12}^{(1)}$, it follows that $Y_{n,12}^{(2)}
=
-Y_{n,12}^{(1)} X_{n+1,n}^\ast 
$, and then 
\begin{equation}
X_{n+1,n}
=
-\left(\left(Y_{n,12}^{(1)}\right)^{-1}Y_{n,12}^{(2)}\right)^\ast 
=
-\left(Y_{n,12}^{(2)}\right)^\ast
\left(Y_{n,12}^{(1)}\right)^{-\ast}
=
\left(Y_{n,12}^{(2)}\right)^\ast
\left(Y_{n,12}^{(1)}\right)^{-1}.
\end{equation}
Here and in the sequel, we write $M^{-\ast}=(M^{-1})^{\ast}$ for any invertible matrix. The last equality follows from
\begin{equation}
Y_{n,12}^{(1)}
=
-\frac{1}{2\pi\ii} \mathcal{H}_n
\Rightarrow
\left(Y_{n,12}^{(1)}\right)^{-\ast}
=
2\pi\ii \left(\mathcal{H}_n\right)^{-\ast}
=
2\pi\ii \mathcal{H}_n^{-1}
=
-\left(Y_{n,12}^{(1)}\right)^{-1},
\end{equation}
because $\mathcal{H}_n$ is Hermitian. Writing everything together for $X_{n,n-1}-X_{n+1,n}$ and using \eqref{eq:BnCn}, 
we obtain \eqref{eq:BnY1Y2}.

\end{proof}

\begin{remark}\label{rem:orthonorm}
\normalfont
The recurrence relation can be rewritten in a more symmetric form for orthonormal polynomials. We define
\begin{equation}
    \Pi_{n,N}(z)=\kappa_{n,N} P_{n,N}(z),
\end{equation}
where $\kappa_{n,N}$ is a non-singular matrix chosen so that
\begin{equation}
\int_{\mathbb{R}} \Pi_{n,N}(x)W_N(x)\Pi_{n,N}(x)^\ast \,\dd x=I_r.
\end{equation}
Equivalently,
\begin{equation}
\kappa_{n,N}\mathcal{H}_{n,N}\kappa_{n,N}^\ast=I_r,
\end{equation}
hence
\begin{equation}
\mathcal{H}_{n,N}=\kappa_{n,N}^{-1}\kappa_{n,N}^{-\ast}=(\kappa_{n,N}^\ast \kappa_{n,N})^{-1}.
\end{equation}
This factorization is not unique: if $\widetilde{\kappa}_{n,N}=U\kappa_{n,N}$ with $U$ unitary, then
\begin{equation}
\widetilde{\kappa}_{n,N}^{-1}\widetilde{\kappa}_{n,N}^{-\ast}=\kappa_{n,N}^{-1}\kappa_{n,N}^{-\ast}.
\end{equation}

The recurrence relation \eqref{eq:TTRRN} becomes
\begin{equation}\label{eq:TTRR_orthonorm}
z\Pi_{n,N}(z)
=
A_{n+1,N}\Pi_{n+1,N}(z)
+\kappa_{n,N} B_{n,N} \kappa_{n,N}^{-1} \Pi_{n,N}(z)
+A_{n,N}^\ast \Pi_{n-1,N}(z),
\end{equation}
where 
\begin{equation}
A_{n,N}=\kappa_{n-1,N}\kappa_{n,N}^{-1}.
\end{equation}
Moreover, we observe that the coefficient  $\widehat{B}_{n,N}:=\kappa_{n,N} B_{n,N} \kappa_{n,N}^{-1}$ is Hermitian (unlike $B_{n,N}$ in general). 
Indeed, from orthogonality one has $B_{n,N}\mathcal{H}_{n,N}=\mathcal{H}_{n,N}B_{n,N}^\ast$, and using $\mathcal{H}_{n,N}^{-1}=\kappa_{n,N}^\ast\kappa_{n,N}$ we obtain
\begin{equation}
\widehat{B}_{n,N}^\ast
=
\kappa_{n,N}^{-\ast}B_{n,N}^\ast\kappa_{n,N}^\ast
=
\kappa_{n,N}B_{n,N}\kappa_{n,N}^{-1}
=
\widehat{B}_{n,N}.
\end{equation}
\end{remark}

\section{Matrix Szeg\H{o} function}
Our results make use of the matrix Szeg\H{o} function $D(z)$, which appears in a suitable factorization of the weight:
\begin{proposition}\label{prop:SzegoD}
\normalfont
Given a weight matrix $M(x)=Q(x)Q(x)^\ast$ that is Hermitian positive definite for $x\in(-1,1)$, there exists an analytic matrix valued function $D :\mathbb{C}\setminus[-1,1]\mapsto \mathbb{C}^{r\times r}$ that is invertible for every $z\in\mathbb{C}\setminus[-1,1]$, with continuous boundary values $D_{\pm}(z)$ on $(-1, 1)$ that satisfies
\begin{equation}\label{eq:WDD}
M(x) 
= 
D_-(x)D_-(x)^\ast
=
D_+(x)D_+(x)^\ast.
\end{equation}
Also, the limit
\begin{equation}\label{eq:defDinfty}
D(\infty) := \lim_{z\to\infty}D(z)
\end{equation}
exists and is an invertible matrix.
\end{proposition}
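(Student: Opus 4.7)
The plan is to reduce the problem on the cut $[-1,1]$ to a matrix spectral factorization problem on the unit circle, apply the classical matrix Szeg\H{o} factorization there, and pull the factor back through a conformal map.

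First, I would introduce the Joukowski-type map
\begin{equation*}
\phi(z) = z - \sqrt{z^2-1},
\end{equation*}
with the branch chosen so that $\sqrt{z^2-1}\sim z$ as $z\to\infty$. This sends $\mathbb{C}\setminus[-1,1]$ biholomorphically onto the open unit disk $\mathbb{D}$ with $\phi(\infty)=0$, and for $x=\cos\theta$ with $\theta\in(0,\pi)$ one has $\phi_\pm(x)=\ee^{\mp\ii\theta}$, so that the two sides of the cut realize $\partial\mathbb{D}$ as a double cover of $[-1,1]$. I would then pull the weight back to the circle via $\tilde M(\ee^{\ii\theta}):=M(\cos\theta)$; this is a Hermitian positive definite continuous matrix function on $\partial\mathbb{D}$ (in the applications of interest even real analytic), so in particular $\log\det\tilde M\in L^1$.

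The key input is the matrix Szeg\H{o} (Wiener--Masani) factorization: such a $\tilde M$ admits a matrix outer function $\tilde D$ analytic on $\mathbb{D}$, with continuous boundary values, satisfying $\tilde D(w)\tilde D(w)^\ast=\tilde M(w)$ almost everywhere on $\partial\mathbb{D}$, and with $\tilde D(0)$ invertible. For the case $Q(x)=\ee^{Ax}$ relevant to this paper, $\tilde D$ can be built algorithmically following \cite{ephremidze_2014}. Setting $D(z):=\tilde D(\phi(z))$ one obtains an analytic map on $\mathbb{C}\setminus[-1,1]$ whose boundary values $D_\pm(x)=\tilde D(\ee^{\mp\ii\theta})$ each satisfy $D_\pm(x)D_\pm(x)^\ast=\tilde M(\ee^{\mp\ii\theta})=M(x)$, verifying \eqref{eq:WDD}. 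The limit $D(\infty)=\tilde D(0)$ exists and is invertible by the outer property, and $D(z)$ is invertible throughout $\mathbb{C}\setminus[-1,1]$ because $\det\tilde D$ is the scalar outer factor of $\det\tilde M$ and hence nonvanishing on $\mathbb{D}$.

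I expect the main obstacle to be the existence and boundary regularity of $\tilde D$. Unlike the scalar case one cannot express the outer factor as the exponential of a Cauchy-type integral because matrix factors do not commute; one must invoke either the abstract Wiener--Masani existence theorem or a constructive scheme such as the Janashia--Lagvilava--Ephremidze algorithm to guarantee that $\tilde D$ extends continuously to $\partial\mathbb{D}$ and remains invertible at the origin. Once this input is secured, the transfer through $\phi$ is essentially mechanical.
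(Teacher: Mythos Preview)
Your proposal is correct and follows essentially the same route as the paper: both reduce to the Wiener--Masani matrix spectral factorization on the unit circle via the Joukowski map and then pull back, with the only cosmetic difference that the paper writes $D(z)=G(1/\varphi(z))$ with $\varphi(z)=z+(z^2-1)^{1/2}$ while you use $\phi(z)=z-(z^2-1)^{1/2}=1/\varphi(z)$ directly. The paper likewise cites Wiener--Masani (and Youla--Kazanjian) for existence and \cite{ephremidze_2014} for the constructive case, so your identification of the main obstacle matches theirs.
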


\begin{remark}
\normalfont
If $M(x)$ is real symmetric for $x\in(-1,1)$ then the matrix Szeg\H{o} function verifies
\begin{equation}
D(\overline{z})
=
\overline{D(z)},\qquad z\in\mathbb{C}\setminus[-1,1].
\end{equation}
In this situation, we have $D_{+}(x)=\overline{D_{-}(x)}$, and \eqref{eq:WDD} becomes
\begin{equation}\label{eq:WDD_real}
M(x) 
= 
D_+(x)D_-(x)^T
=
D_-(x)D_+(x)^T.
\end{equation}
\end{remark}

The existence of this matrix factorization relies on classical results, see for example the papers of Wiener and Masani \cite[Theorem 7.13]{WM1957} or Youla and Kazanjian \cite{YK1978}, and it is usually stated on the unit circle: under the condition 
\begin{equation}\label{eq:Szegocond}
    \frac{1}{2\pi}\int_{-\pi}^{\pi}\log(\det M(\cos\theta))\dd \theta>-\infty, 
\end{equation}
we have
\begin{equation}
    M\left(\frac{z+z^{-1}}{2}\right) 
    =
    G(z)G(z)^\ast, \qquad |z|=1,
\end{equation}
where $G(z)$ is analytic and invertible in $|z|<1$. Then 
    \begin{equation}\label{eq:DG}
        D(z)=G\left(\frac{1}{\varphi(z)}\right),\qquad 
        \varphi(z)=z+(z^2-1)^{1/2},
    \end{equation}
    where $\varphi(z)$ maps conformally $\mathbb{C}\setminus[-1,1]$ onto $\mathbb{C}\setminus\mathbb{D}$, is a matrix Szeg\H{o} function for $M(z)$.

The matrix Szeg\H{o} function is used in the global parametrix for MVOPs on $[-1,1]$ in \cite{DKR2023}. In that setting, $D(z)$ is constructed with a suitable factorization of the weight, that comes from representation theory. In our current situation, if $M(x)=Q(x)Q(x)^\ast$, we can only claim existence of such a factorization, but if $M(x)=\ee^{Ax}\ee^{A^\ast x}$, with $A$ a nilpotent matrix (so $M(x)$ is in fact a matrix polynomial), then a constructive algorithm is proposed in \cite{ephremidze_2014}. Building on these ideas, we have the following result:

\begin{proposition}
\normalfont
    Let $M(x)=\ee^{Ax}\ee^{A^\ast x}=G_0(x)G_0(x)^\ast$, with $A$ an $r\times r$ nilpotent matrix as in \eqref{eq:A}, then $D(z)$ given by \eqref{eq:DG} is a matrix Szeg\H{o} function for $M(z)$, where 
    \begin{equation}
    G(z)
    =
    G_0(z)
\textrm{diag}(z^{r-1},z^{r-2},\ldots,1)
\prod_{j=1}^{r-1}
\left(\prod_{k=1}^{r-j} U_k^{(j)}
\textrm{diag}(1,\ldots, z^{-1},\ldots,1)
\right).
    \end{equation}
    Here $U_k^{(j)}$ are constant unitary matrices, and $z^{-1}$ appears in the $j$-th position in the product.
    %
    
\end{proposition}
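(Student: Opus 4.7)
The plan is to verify the two defining properties of $G(z)$: first, that $G(z)G(z)^\ast = M((z+z^{-1})/2)$ on $|z|=1$; second, that $G(z)$ extends to an analytic and invertible matrix function on the open disk $|z|<1$. Once both are established, setting $D(z)=G(1/\varphi(z))$ via \eqref{eq:DG} produces the matrix Szeg\H{o} function on $\mathbb{C}\setminus[-1,1]$.

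The first property follows once one observes that every factor appearing after $G_0$ is unitary on the unit circle: the constants $U_k^{(j)}$ by construction, and both $\diag(z^{r-1},\ldots,1)$ and $\diag(1,\ldots,z^{-1},\ldots,1)$ because their diagonal entries have unit modulus for $|z|=1$. The product of these factors is therefore unitary on $|z|=1$, so the baseline identity $G_0 G_0^\ast = M$ (interpreted after the substitution $x=(z+z^{-1})/2$) propagates to $G(z)G(z)^\ast = M((z+z^{-1})/2)$ there.

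For the second property, the key observation is that $G_0\,\diag(z^{r-1},\ldots,1)$, understood under the Joukowski substitution, is actually a matrix polynomial in $z$. Indeed, the nilpotency and subdiagonal form of $A$ make $e^{Ax}$ unit lower triangular with each $(i,j)$ entry a monomial in $x$ of degree $i-j$; multiplying column $j$ by $z^{r-j}$ precisely clears the negative powers of $z$ generated by $x=(z+z^{-1})/2$. The determinant of this initial factor equals $z^{r(r-1)/2}$, since $\det G_0=1$ (the trace of $A$ is zero) and the diagonal matrix contributes $z^{(r-1)+(r-2)+\cdots+0}$. The $\sum_{j=1}^{r-1}(r-j)=r(r-1)/2$ factors of the form $\diag(1,\ldots,z^{-1},\ldots,1)$ appearing in the double product each divide the determinant by $z$, so after all of them $\det G(z)$ is a nonzero constant and invertibility reduces to analyticity.

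Analyticity is the main step and proceeds inductively following Ephremidze's algorithm \cite{ephremidze_2014}. At stage $(j,k)$ one has a matrix polynomial $P(z)$ analytic in $|z|<1$, and the task is to find a constant unitary $U_k^{(j)}$ such that the $j$-th column of $P(z)U_k^{(j)}$ vanishes at $z=0$; multiplying by $\diag(1,\ldots,z^{-1},\ldots,1)$ then divides that column by $z$ without creating a pole, yielding a new matrix polynomial with determinant having one less zero at the origin. Such a $U_k^{(j)}$ exists because the remaining order of vanishing of $\det P$ at $z=0$ forces a nontrivial kernel of $P(0)$; one chooses $U_k^{(j)}e_j$ to be a unit vector in that kernel and completes to a unitary. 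The main technical obstacle is verifying that this column-by-column reduction can be carried out in the prescribed order through all $r(r-1)/2$ iterations without obstruction; here one tracks how the kernel of the constant term evolves using the lower triangular shape inherited from $G_0$ and the specific subdiagonal structure of $A$ in \eqref{eq:A}. Once the full sequence is complete, $G(z)$ is analytic in $|z|<1$ with nonvanishing constant determinant, hence invertible throughout the disk, which concludes the proof.
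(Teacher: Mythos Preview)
Your proposal is correct and follows essentially the same approach as the paper, which also builds $G(z)$ by clearing the poles of $G_0$ with $\diag(z^{r-1},\ldots,1)$ and then iteratively removing the $r(r-1)/2$ zeros of the determinant at the origin via Ephremidze's reduction with unitary right factors. The ``technical obstacle'' you flag about the prescribed column order is not a genuine obstruction---since at each stage $\det P(0)=0$ furnishes a nonzero kernel vector that can be placed as the $j$-th column of a unitary for \emph{any} $j$, the specific ordering is merely a convention and no appeal to the lower-triangular structure of $G_0$ is needed for that step.
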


\begin{remark}
This factorization is clearly not unique, since we can consider $\widetilde{G}(z)=G(z) U$, with $U$ for example a constant unitary matrix, and the factorization still works.
\end{remark}

\begin{proof}
\normalfont
With the change of variable $x=\frac{1}{2}(z+z^{-1})$,
$M(z)$ is a Laurent polynomial in the variable $z$. We write
\begin{equation}
    M(z)=G_0(z)G_0(z)^\ast, \qquad G_0(z)=\ee^{Ax(z)}. 
\end{equation}
Because of the structure of the matrix $A$, the $j$-th column of $G_0(z)$ will have a pole at the origin of order $r-j$, for $j=1,2,\ldots r$.

We can remove these poles of $G_0(z)$ with right multiplication by a diagonal matrix with suitable positive powers of $z$:
\[
\widetilde{G}_0(z)
=
G_0(z)\textrm{diag}(z^{r-1},z^{r-2},\ldots,1).
\]

The resulting matrix $\widetilde{G}_0(z)$ is analytic in $\mathbb{D}$, but it becomes singular at the origin, since 
$\det \widetilde{G}_0(z)=z^{\frac{r(r-1)}{2}}\det G_0(z)=z^{\frac{r(r-1)}{2}}$.

In the next step, we remove zeros of the determinant (one by one) as follows: we multiply on the right by unitary matrices and then diagonal matrices with negative powers of $z$. The first column would be fixed with
\[
G_1(z)
:=
G_0(z)\textrm{diag}(z^{r-1},z^{r-2},\ldots,1)
\left(\prod_{k=1}^{r-1} U_k^{(1)}
\textrm{diag}(z^{-1},1,\ldots,1)
\right)
\]
The matrix $G_1(z)$ remains analytic in $\mathbb{D}$ if we choose the unitary matrices $U_k^{(1)}$ in such a way that at each step the product has the first column equal to $0$ when $z=0$, so that $z=0$ is a root of all the entries in that column (this happens $r-1$ times), and then the product with $z^{-1}$ on the right does not introduce any singularity.

Once we have used $r-1$ zeros of $\det \widetilde{G}_0(z)$ in this way, we move to the second column, and we build the following matrix:
\[
G_2(z)
:=
G_1(z)
\left(\prod_{k=1}^{r-2} U_k^{(2)}
\textrm{diag}(1,z^{-1},\ldots,1)
\right)
\]

We continue in the same fashion until we get to the last but one row, and as a result we obtain 
\[
G_{r-1}(z)
:=
G_0(z)
\textrm{diag}(z^{r-1},z^{r-2},\ldots,1)
\prod_{j=1}^{r-1}
\left(\prod_{k=1}^{r-j} U_k^{(j)}
\textrm{diag}(1,\ldots,z^{-1},\ldots,1)
\right),
\]
where $z^{-1}$ appears in the $j$-th position.

This matrix $G_{r-1}(z)$ is analytic and non-singular in $\mathbb{D}$, because
\[
\det G_{r-1}(z)
=
\pm \det G_{0}(z)\cdot z^{\frac{r(r-1)}{2}}
\prod_{j=1}^{r-1} z^{-(r-j)}=\pm 1.
\]

Furthermore, we can check that $G_{r-1}(z)G_{r-1}(z)^\ast=G_{0}(z)G_{0}(z)^\ast$, so the process keeps the original factorization. Therefore, we can take $G(z)=G_{r-1}(z)$ as the final matrix Szeg\H{o} function.

\end{proof}

\begin{example}
\normalfont
In the $2\times 2$ case ($r=2$), we have
\begin{equation}
    G_0(z)
    =
    \begin{pmatrix}
    1 & 0\\
    \frac{a}{2}(z+z^{-1}) & 1
    \end{pmatrix},
\end{equation}
and then we build
    \begin{equation}
    \widetilde{G}_0(z)
    =
    G_0(z) \begin{pmatrix} z & 0\\ 0 & 1\end{pmatrix}
    =
    \begin{pmatrix}
    z & 0\\ \frac{a}{2}(z^2+1) & 1
    \end{pmatrix}.
\end{equation}
We have
\[
G_1(z)
=
G_0(z)\textrm{diag}(z,1)
U_1^{(1)}\textrm{diag}(z^{-1},1)
\]
We choose the unitary factor $U_1^{(1)}$ in such a way that
\[
G_0(z)\textrm{diag}(z,1)\Big\vert_{z=0}
U_1^{(1)}
=
\begin{pmatrix}
    0 & \ast\\
    0 & \ast
\end{pmatrix}
\Rightarrow
\begin{pmatrix}
    0 & 0\\
    \frac{a}{2} & 1
\end{pmatrix}
U_1^{(1)}
=
\begin{pmatrix}
    0 & \ast\\
    0 & \ast
\end{pmatrix}
\]
We can pick (for example)
\[
U_1^{(1)}
=
\frac{1}{\sqrt{1+\frac{a^2}{4}}}
\begin{pmatrix}
1 & \frac{a}{2}\\
-\frac{a}{2} & 1
\end{pmatrix}
=
\frac{1}{\sqrt{4+a^2}}
\begin{pmatrix}
2 & a\\
-a & 2
\end{pmatrix}.
\]
As a result, we have
\begin{equation}
G(z)
=
\frac{1}{2\sqrt{a^2+4}}
\begin{pmatrix}
    4 & 2az\\
    2az & 4+a^2(1+z^2)
\end{pmatrix}.
\end{equation}
We can verify that $D(z)=G\left(\frac{1}{\varphi(z)}\right)$ satisfies all the requirements for the Szeg\H{o} function, using the fact that $\varphi_+(x)\varphi_-(x)=1$ for $x\in(-1,1)$. Also, we have
\begin{equation}
D(\infty)
=
\begin{pmatrix}
\frac{2}{\sqrt{a^2+4}} & 0\\
0 & \frac{\sqrt{a^2+4}}{2}
\end{pmatrix}.
\end{equation}
\end{example}

\begin{example}
\normalfont

In the case $r=3$, we have
\begin{equation}
G_0(z)
=
\begin{pmatrix}
    1 & 0 & 0\\
    \frac{a}{2}(z+z^{-1}) & 1 & 0\\
    \frac{a^2}{8}(z+z^{-1})^2 & \frac{a}{2}(z+z^{-1}) & 1 
\end{pmatrix}.
\end{equation}
Then, in the first step we obtain
\begin{equation}
\widetilde{G}_0(z)
=
G_0(z) 
\textrm{diag}(z^2,z,1)
=
\begin{pmatrix}
    z^2 & 0 & 0\\ 
    \frac{az}{2}(z^2+1) & z & 0\\
    \frac{a^2}{8}(z^2+1)^2 & \frac{a}{2}(z^2+1) & 1
    \end{pmatrix}.
\end{equation}

In this case, we have
\[
G_1(z)
=
G_0(z)\textrm{diag}(z^2,z,1)
U_1^{(1)}\textrm{diag}(z^{-1},1,1)
U_2^{(1)}\textrm{diag}(z^{-1},1,1).
\]
We choose the first unitary factor $U_1^{(1)}$ in such a way that
\[
G_0(z)\textrm{diag}(z^2,z,1)\Big\vert_{z=0}
U_1^{(1)}
=
\begin{pmatrix}
    0 & \ast &\ast \\
    0 & \ast & \ast\\
    0 & \ast & \ast
\end{pmatrix}
\Rightarrow
\begin{pmatrix}
    0 & 0 & 0\\
    0 & 0 & 0\\
    \frac{a^2}{8} & \frac{a}{2} & 1
\end{pmatrix}
U_1^{(1)}
=
\begin{pmatrix}
    0 & \ast &\ast \\
    0 & \ast & \ast\\
    0 & \ast & \ast
\end{pmatrix}.
\]
We have great freedom here, for example we can try
\[
U_1^{(1)}
=
\frac{1}{\sqrt{16+a^2}}
\begin{pmatrix}
    4 & a & 0 \\
    -a & 4 & 0\\
    0 & 0 & \sqrt{16+a^2}
\end{pmatrix}.
\]
If we take this choice, the next step is
\[
\begin{aligned}
G_0(z)\textrm{diag}(z^2,z,1)U_1^{(1)}\textrm{diag}(z^{-1},1,1)\Big\vert_{z=0}
U_2^{(1)}
&=
\begin{pmatrix}
    0 & 0 & 0\\
    \frac{a}{\sqrt{16+a^2}} & 0 & 0\\
    0 & \frac{a\sqrt{16+a^2}}{8} & 1
\end{pmatrix}
U_2^{(1)}\\
&=
\begin{pmatrix}
    0 & \ast &\ast \\
    0 & \ast & \ast\\
    0 & \ast & \ast
\end{pmatrix}.
\end{aligned}
\]
We can take
\[
U_2^{(1)}
=
\frac{1}{8+a^2}
\begin{pmatrix}
    0 & 0 & 8+a^2 \\
    8 & a\sqrt{16+a^2} & 0\\
    -a\sqrt{16+a^2} & 8 & 0
\end{pmatrix}.
\]
Finally, we have
\[
G_1(z)\Big\vert_{z=0}
U_1^{(2)}
=
\begin{pmatrix}
    \ast & 0 &\ast \\
    \ast & 0 & \ast\\
    \ast & 0 & \ast
\end{pmatrix},
\]
which leads to
\[
\begin{pmatrix}
    0 & 0 & 0\\
    \frac{4}{\sqrt{16+a^2}} & 0 & \frac{a}{\sqrt{16+a^2}}\\
    0 & \frac{8+a^2}{8} & 0
\end{pmatrix}
U_1^{(2)}
=
\begin{pmatrix}
    \ast & 0 &\ast \\
    \ast & 0 & \ast\\
    \ast & 0 & \ast
\end{pmatrix}.
\]
We can take
\[
U_1^{(2)}
=
\frac{1}{\sqrt{16+a^2}}
\begin{pmatrix}
    4 & a & 0 \\
    0 & 0 & \sqrt{16+a^2} \\
    a & -4 & 0
\end{pmatrix}.
\]
Writing everything together, we have
\[
\begin{aligned}
G(z)
&=
G_1(z)
U_1^{(2)}
\textrm{diag}(1,z^{-1},1)\\
&=
G_0(z)\textrm{diag}(z^2,z,1)U_1^{(1)}\textrm{diag}(z^{-1},1,1)U_2^{(1)}\textrm{diag}(z^{-1},1,1)
U_1^{(2)}
\textrm{diag}(1,z^{-1},1),  
\end{aligned}
\]
and direct calculation gives
\[
G(z)
=
\frac{1}{a^2+8}
\begin{pmatrix}
    4az & -8 & a^2z^2\\
    8+a^2(1+2z^2) & -4az &\frac{az}{2}(8+a^2(1+z^2))\\
    \frac{az}{2}(8+a^2(1+z^2)) & 
    -a^2z^2 & \frac{(8+a^2(1+z^2))^2}{8}
\end{pmatrix}.
\]
We can make $G(z)$ antisymmetric by considering $\widetilde{G}(z)=G(z)\begin{pmatrix} 0 & 1 & 0\\ -1 & 0& 0\\ 0 & 0 & 1\end{pmatrix}$.

Again, we can verify that $D(z)=\widetilde{G}\left(\frac{1}{\varphi(z)}\right)$ satisfies all the requirements for the Szeg\H{o} function. Also, we have
\begin{equation}
D(\infty)
=
\frac{1}{a^2+8}
\begin{pmatrix}
8 & 0 & 0\\
0 & a^2+8 & 0\\
0 & 0 & \frac{(a^2+8)^2}{8}
\end{pmatrix}.
\end{equation}

\end{example}

\begin{remark}
\normalfont
     In the cases $r=2$ and $r=3$ we can write 
\[
\prod_{j=1}^{r-1}
\left(\prod_{k=1}^{r-j} U_k^{(j)}
\textrm{diag}(1,\ldots,z^{-1},\ldots,1)
\right)
=
U
\textrm{diag}(z^{-(r-1)},z^{-(r-2)},\ldots,1),
\]
with a constant unitary matrix $U$. Again, $z^{-1}$ appears in the $j$-th position on the left hand side. In the $2\times 2$ and $3\times 3$ cases, we have
\begin{equation}
    U
    =
    \frac{1}{\sqrt{a^2+4}}
    \begin{pmatrix}
        2 & a\\ -a & 2
    \end{pmatrix},\qquad
U
=
\frac{1}{a^2+8}
\begin{pmatrix}
8 & 4a & a^2\\
-4a & 8-a^2 & 4a\\
a^2 & -4a & 8
\end{pmatrix}.
\end{equation}
This result is much more elegant, and it certainly suggests a deeper structure,  but we could not find a general proof of such property of swapping the order of the matrices in the factorization for $r\geq 4$. It is worth noticing, though, that guessing the form of such a global matrix $U$ is hard, whereas the long product can be constructed with relatively easy unitary factors.
\end{remark}

\section{Main results}
For simplicity, we study the diagonal case $n=N$, however since the dependence on $N$ is restricted to the scalar part of the weight,
we can write 
\begin{equation}\label{eq:VnN}
    \
    \ee^{-N v(z)}M(z)
    =
    \ee^{-n v_t(z)}M(z),\qquad\textrm{where}\quad 
    t=\frac{n}{N},\quad v_t(z)=\frac{1}{t}v(z).
\end{equation}


It follows from results of Kuijlaars and McLaughlin \cite[Theorem 1.3 (iii)]{KMcL2000} 
that, since $v(z)$ is analytic and $t_0=1$ is a regular value for $v(z)$ (by assumption), then 
$t\in(1-\varepsilon,1+\varepsilon)$ is a regular value too, for some $\varepsilon>0$. Also, in that neighborhood of $t_0=1$, the support of the equilibrium measure depends on $t$ but it remains one interval $[a_t,b_t]$, with $a_t$ real analytic and decreasing function of $t$, and $b_t$ real analytic and increasing function of $t$. Consequently, the asymptotic analysis will be valid for 
\begin{equation}\label{eq:nN}
    1-\varepsilon<\frac{n}{N}<1+\varepsilon
\end{equation}
for some $\varepsilon>0$. This observation is relevant for instance if one wants to use the asymptotic information together with string equations for the recurrence coefficients, see for instance  \cite{DER2021,GdIM2012}, in order to obtain more detailed information on the asymptotic expansions. This is a very advantageous approach in the scalar case, but it is unclear at this stage if these string equations can be used (even with symbolic calculations) in the matrix setting.

Our first result gives strong asymptotics for MVOPs as $N\to\infty$, for $z$ in different regions of the complex plane:
\begin{theorem}\label{th:MVOPs}
\normalfont 
Let $P_{N,N}(z)$ be the monic MVOPs with respect to the exponential weight \eqref{eq:Wx}. Then, 
as $N\to\infty$, we have the following asymptotic expansions:
\begin{enumerate}
    \item[(i)] For $z$ in compact subsets of $\mathbb{C}\setminus[-1,1]$ (outer asymptotics), we have 
\begin{equation}\label{eq:asymp_outer}
    \ee^{-Ng(z)}
    P_{N,N}(cz+d)
    \sim
    \frac{\varphi(z)^{1/2}}{\sqrt{2}(z^2-1)^{1/4}}D(\infty)
    \left[I_r+\sum_{k=1}^{\infty}\frac{\Pi_k(z)}{N^k}\right]D(z)^{-1}.
\end{equation}
uniformly for $z\in\mathbb{C}\setminus[-1,1]$, where each coefficient $\Pi_k(z)$ is an analytic function of $z$ in this domain. Here $D(z)$ is the matrix Szeg\H{o} function corresponding to the weight $M(x)$ in \eqref{eq:MQQ}, and $D(\infty)$ is given by \eqref{eq:defDinfty}. The parameters $c,d$ are given in \eqref{eq:ctdt}, and
\begin{equation}\label{eq:varphi}
\varphi(z)
= 
z +(z^2-1)^{1/2},
\end{equation}
which is analytic in $\mathbb{C}\setminus [-1,1]$, is a conformal map from this domain onto the exterior of the unit circle. The function $g(z)$ is defined in \eqref{eq:gz}.

\item[(ii)] For $x$ in compact subsets of $(-1,1)$ (inner asymptotics), we have 
\begin{equation}\label{eq:asymp_inner}
    c^{-N} \ee^{-\frac{N}{2}(v(x)+\ell)}
    P_{N,N}(cx+d)
    =
    \frac{\sqrt{2}}{(1-x^2)^{1/4}} 
    D(\infty)
\left[\textrm{Re}
\left(
\ee^{\ii\psi(x)}D_+(x)^{-1}\right)+\mathcal{O}(N^{-1})\right],
\end{equation}
where the phase function is 
\begin{equation}\label{eq:psi}
\psi(x)=-\frac{\ii N}{2}\phi_{+}(x)+\frac{1}{2}\arcsin(x).
\end{equation}
The function $D_+(x)$ is the boundary limit of $D(z)$ from the upper half plane, and $\ell$ is the Euler-Lagrange constant in \eqref{eq:variational}.

\item[(iii)] For $x\in(1-\delta,1)$, we have the following Airy--type asymptotics:
\begin{equation}\label{eq:asymp_edge}
    \begin{aligned}
        &N^{-\frac{1}{6}}
        c^{-N}\ee^{-\frac{N}{2}(v(x)+\ell)}
        P_{N,N}(cx+d)\\
    &=
    \frac{f(x)^{1/4}_+ \sqrt{2\pi}}{(1-x^2)^{1/4}}\,\textrm{Ai}(N^{2/3}f(x)) 
    D(\infty)
\,\textrm{Re}
\left(
\ee^{\ii\left(\frac{\pi}{4}-\frac{1}{2}\arcsin(x)\right)}D_+(x)^{-1}\right)
+\mathcal{O}(N^{-1/3}),
    \end{aligned}
\end{equation}
where $f(z)$ is a conformal map in a neighborhood of $z=1$ defined in terms of $\phi(z)$, see \eqref{eq:fz}, $\ell$ is the Euler-Lagrange constant in \eqref{eq:variational} and $f_+(x)^{1/4}$ is the positive boundary value of the fourth root on $(1-\delta,1)$.

\end{enumerate}
\end{theorem}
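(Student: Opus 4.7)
The plan is to perform a Deift--Zhou nonlinear steepest descent analysis on the Riemann--Hilbert problem \eqref{eq:solRHPY}, adapting the scalar scheme of \cite{DKMVZ1999,KT2009} to the matrix setting. The only essentially new ingredient relative to the scalar case is the use of the matrix Szeg\H{o} function $D(z)$ from Proposition 2 in the construction of the global parametrix. After rescaling $x\mapsto cy+d$ so that the equilibrium support becomes $[-1,1]$, I would run the standard chain $Y\to T\to S\to R$, where $R$ solves a small-norm problem on a contour away from $[-1,1]$, and then unravel in each region to obtain the three asymptotic regimes.

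The first transformation $Y\to T$ uses the $g$-function of the equilibrium measure for $v_t$ to normalize at infinity and to introduce the phase $\phi(z)=g_+(z)-g_-(z)$ on $[-1,1]$ together with the Euler--Lagrange constant $\ell$ from \eqref{eq:variational}; the matrix factor $M$ plays no role here since only the scalar $\ee^{-Nv}$ is involved. The transformation $T\to S$ opens lenses using the factorization of the jump on $(-1,1)$ into two triangular factors, deformed onto upper and lower lenses where $\textrm{Re}\,\phi<0$ makes their off-diagonal blocks exponentially small, and a central antidiagonal factor involving $M$ and $M^{-1}$; positive-definiteness of $M$ guarantees that $M^{-1}$ exists, and non-commutativity forces one to track matrix orderings carefully. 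The global parametrix is then taken to be
\begin{equation*}
P^{(\infty)}(z)=D(\infty)^{\sigma_3}\, N(z)\, D(z)^{-\sigma_3},
\end{equation*}
where $N(z)$ is the standard scalar parametrix on $[-1,1]$ (each $2\times 2$ entry multiplied by $I_r$) built from $\varphi$ and $(z^2-1)^{1/4}$; the matching of jumps across $(-1,1)$ closes thanks to the identity $M=D_+D_+^\ast=D_-D_-^\ast$ in \eqref{eq:WDD}, which converts the antidiagonal matrix jump of $S$ into the scalar antidiagonal jump handled by $N$. Near $\pm 1$, local Airy parametrices in disks $U_{\pm 1}$ are built via the conformal map $f$ from \eqref{eq:fz} and conjugated by $D^{\sigma_3}$, matching $P^{(\infty)}$ on $\partial U_{\pm 1}$ up to $\mathcal{O}(N^{-1})$. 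Setting $R=S(P^{(\infty)})^{-1}$ outside the disks and $R=S(P^{(\pm 1)})^{-1}$ inside produces a small-norm RHP whose solution expands as $R(z)=I_{2r}+\sum_{k\ge 1}R_k(z)/N^k$.

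Unravelling $R\to S\to T\to Y$ in each region then yields the three parts of the theorem. For $z$ outside the lens and the disks only $P^{(\infty)}$ contributes, and extracting the $(1,1)$-block gives \eqref{eq:asymp_outer} with $\Pi_k$ determined by the expansion of $R$. For $x$ in a compact subset of $(-1,1)$, $S$ also carries triangular lens factors on both sides, and combining the boundary values $\varphi_\pm^{1/2}$, $(z^2-1)^{-1/4}_\pm$ and $D_\pm$ produces the real part $\textrm{Re}(\ee^{\ii\psi(x)}D_+(x)^{-1})$ with the phase \eqref{eq:psi}. For $x\in(1-\delta,1)$, the Airy local parametrix contributes $\textrm{Ai}(N^{2/3}f(x))$ together with the prefactor $f_+(x)^{1/4}$, while the boundary value $D_+^{-1}$ persists from the matrix Szeg\H{o} factor. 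The main obstacle is the global parametrix: one must track the matrix ordering carefully so that \eqref{eq:WDD} applies on both sides of $(-1,1)$, and verify that the Airy local parametrix still matches $P^{(\infty)}$ to the required order once the conjugation by $D^{\sigma_3}$ is inserted.
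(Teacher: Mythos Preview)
Your outline matches the paper's proof almost step for step: the chain $Y\to U\to T\to S\to R$, the global parametrix built from the scalar model $\widetilde P^{(\infty)}$ dressed by the matrix Szeg\H{o} function, Airy local parametrices at $\pm 1$, and the unravelling in the three regions are exactly what the paper does in Sections~5 and~6.

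Two places where your shorthand would not survive the actual computation, and where the paper has to be more careful than you indicate. First, the global parametrix is \emph{not} $D(\infty)^{\sigma_3}N(z)D(z)^{-\sigma_3}$ in any literal sense: the $\sigma_3$ convention of Remark~\ref{rem:sigma3} is only defined for scalar functions, and for a matrix Szeg\H{o} factor the $(2,2)$ block must be $D(\overline z)^\ast$ rather than $D(z)$, see \eqref{eq:Pinfty}. With the naive $D(z)$ the jump does not reduce to the scalar antidiagonal one, because $D_-^{-1}M D_+^{-1}\neq I_r$ in general; it is precisely the pairing $D_-^{-1}M D_-^{-\ast}=I_r$ (from $M=D_-D_-^\ast$) that closes, and that forces the conjugate-transpose in the second block. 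Second, the local parametrices at $\pm 1$ are not ``conjugated by $D^{\sigma_3}$'': $D$ has a branch cut through the disk and cannot serve as the analytic dressing. The paper instead strips the matrix part of the jump with the entire factors $\ee^{\pm Az}$, $\ee^{\mp A^\ast z}$ (see \eqref{eq:P1P}), builds the Airy model, and then absorbs the mismatch into the analytic prefactor $E_N$; proving $E_N$ is genuinely analytic at $z=\pm 1$ is a separate lemma that uses the local structure of $D(z)^{-1}\ee^{Az}$. Both points are exactly the ``matrix ordering'' issues you flag at the end, but they require specific constructions rather than just bookkeeping.
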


\begin{remark}\label{rem:zeros}
\normalfont
From the previous results, we can obtain information regarding the zeros of MVOPs, that are understood as zeros of $\det\, P_{N,N}(z)$. Bearing in mind that the Szeg\H{o} function is invertible for $z\in\mathbb{C}\setminus[-1,1]$, it follows from Theorem \ref{th:MVOPs} and Hurwitz's theorem (see e.g. \cite[\S 6.4]{Simon2a}) that as $N\to\infty$, zeros of $\det\, P_{N,N}(z)$ cannot accumulate on any set of $\mathbb{C}\setminus[-1,1]$.
If we take determinants in \eqref{eq:asymp_inner}, we obtain
\begin{equation}
    \det\, P_{N,N}(cz+d)
    =
    F_N(x)
    \det\left(
    \textrm{Re}\left(\ee^{\ii\psi(x)}D_+(x)^{-1}\right)\right)\left(1+\mathcal{O}(N^{-1})\right),
\end{equation}
where $F_N(x)\neq 0$ in the interval $(-1,1)$. As a consequence, zeros of MVOPs tend to zeros of $\textrm{Re}\left(\ee^{\ii\psi(x)}D_+(x)^{-1}\right)$ as $N\to\infty$. If the Szeg\H{o} function is available explicitly (we include examples in Section \ref{sec:conclusions}), then we can give more detailed information about the limit distribution of zeros inside the interval $(-1,1)$.
\end{remark}

For the next result, we need the function 
$L(z)=D(z)^{-1}Q(z)Q(\overline{z})^\ast D(\overline{z})^{-\ast}$, see \eqref{eq:Lz}, which is analytic in $\mathbb{C}\setminus[-1,1]$ and admits local expansions around the points $z=\pm 1$:
\begin{equation}
L(z)
=
\begin{cases}
I_r+L_1 (z-1)^{1/2}+\mathcal{O}(z-1),&\qquad z\to 1\\
I_r+L_{-1} (z+1)^{1/2}+\mathcal{O}(z+1),&\qquad z\to -1,
\end{cases}
\end{equation}
where $L_{\pm 1}$ are some $r\times r$ matrices. 

As an example, in the case $r=2$, if $Q(x)=\ee^{Ax}$, with $A=\begin{pmatrix} 0 & 0 \\ a & 0\end{pmatrix}$, we obtain
\begin{equation}
L_1
=
\frac{2\sqrt{2}a}{a^2+4}
\begin{pmatrix}
-a & 2\\
2 & a
\end{pmatrix},\qquad 
L_{-1}
=
\frac{2\sqrt{2}a}{a^2+4}
\begin{pmatrix}
a & 2\\
2 & -a
\end{pmatrix}.
\end{equation}

These matrices $L_1$ and $L_{-1}$ appear in the next result, which gives the asymptotic expansions for recurrence coefficients:

\begin{theorem}\label{th:BnCn}
\normalfont 
The recurrence coefficients in \eqref{eq:TTRRN} satisfy
\begin{equation}
\begin{aligned}
B_{N,N}\sim B^{(0)}+\sum_{j=1}^{\infty} \frac{B^{(j)}}{N^j},\qquad
C_{N,N}\sim C^{(0)}+\sum_{j=1}^{\infty} \frac{C^{(j)}}{N^j},
\end{aligned}
\end{equation}
as $N\to\infty$. The first coefficients are
\begin{equation}\label{eq:asympBnCn_theorem}
\begin{aligned}
B^{(0)}&=d I_r, \qquad \\
C^{(0)}&=\frac{c^2}{4}I_r\qquad
C^{(1)}=
\frac{\sqrt{2} c^2}{8}
D(\infty)
\left(\frac{L_1}{h(1)}-\frac{L_{-1}}{h(-1)}\right)
D(\infty)^{-1},
\end{aligned}
\end{equation}
in terms of $D(\infty)$ again and the constants $c$ and $d$ given in \eqref{eq:ctdt}. The function $h(x)$ is directly related to the density of the equilibrium measure, see \eqref{eq:defpsit}.
\end{theorem}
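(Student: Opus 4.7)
The plan is to combine the full Riemann--Hilbert steepest descent output used to prove Theorem~\ref{th:MVOPs} with the identities \eqref{eq:BnCn} for the recurrence coefficients. After the chain of transformations $Y\to T\to S\to R$, one can write $Y(z)$ near $z=\infty$ explicitly in terms of the global parametrix $P^{(\infty)}(z)$ (built from the matrix Szeg\H{o} function $D$), the $g$-function, the rescaling $z\mapsto cz+d$, and the small-norm matrix $R(z)$. The latter admits an asymptotic expansion
\begin{equation*}
R(z)\sim I_r+\sum_{k\geq 1}\frac{R_k(z)}{N^k},\qquad R_k(z)=\sum_{\ell\geq 1}\frac{R_k^{(\ell)}}{z^\ell}.
\end{equation*}
Expanding both $P^{(\infty)}$ and $R$ at infinity and undoing the transformations produces $Y^{(1)}$ and $Y^{(2)}$ as polynomial combinations of the $R_k^{(\ell)}$ and the moments of $P^{(\infty)}$ at infinity. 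Substituting these into \eqref{eq:BnCn} and \eqref{eq:BnY1Y2} yields $B_{N,N}$ and $C_{N,N}$ order by order in $1/N$.

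For the \textbf{leading coefficients} $B^{(0)}$ and $C^{(0)}$, only the global parametrix contributes. The prefactor $\frac{\varphi(z)^{1/2}}{\sqrt{2}\,(z^2-1)^{1/4}}$ in \eqref{eq:asymp_outer}, together with the rescaling $z\mapsto cz+d$, reproduces the Chebyshev three--term recurrence, giving $B^{(0)}=d\,I_r$ and $C^{(0)}=\frac{c^2}{4}I_r$. The Szeg\H{o} factor $D$ enters only through a conjugation by $D(\infty)$, which drops out at leading order because the leading contributions are scalar multiples of the identity.

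The \textbf{subleading coefficient} $C^{(1)}$ is driven by $R_1$. The matrix $R$ has jumps supported on the boundaries of the small disks around $z=\pm 1$, and the matching with the Airy local parametrices yields a meromorphic $R_1$ of the form
\begin{equation*}
R_1(z)=\frac{A_1}{z-1}+\frac{A_{-1}}{z+1}.
\end{equation*}
The residues $A_{\pm 1}$ are read off from the standard Airy matching: their explicit form involves the leading nontrivial coefficient of the Airy jump discrepancy $P^{(\infty)}(z)\,[\text{Airy correction}]\,P^{(\infty)}(z)^{-1}$, which after conjugation across $D(\infty)$ is proportional to $D(\infty)L_{\pm 1}D(\infty)^{-1}$ with prefactor $\pm\frac{\sqrt{2}}{8}\,h(\pm 1)^{-1}$, the function $h$ encoding the derivative of the conformal map $f$ at the endpoint. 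Reading off $R_1^{(1)}=A_1+A_{-1}$ and substituting into the $1/N$--term of $C_{N,N}=Y_{N,12}^{(1)}Y_{N,21}^{(1)}$, the combination $\frac{L_1}{h(1)}-\frac{L_{-1}}{h(-1)}$ appears with the correct sign and the $D(\infty)$ conjugation survives, producing the stated formula for $C^{(1)}$.

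The \emph{main obstacle} is the careful book--keeping of the unfolding $R\to S\to T\to Y$: tracking the factors of the $g$-function, the Euler--Lagrange constant $\ell$, the rescaling $cz+d$, and the left/right $D(\infty)^{\pm 1}$ factors, while extracting the correct entries in the $2\times 2$ block structure. Once this is in place, higher--order coefficients $B^{(j)}$ and $C^{(j)}$ for $j\geq 1$ are obtained recursively from the hierarchy $R_k$ produced by iterating the small--norm analysis, and the particular form of $C^{(1)}$ above reduces to the two local expansions of $L(z)$ at the endpoints $\pm 1$.
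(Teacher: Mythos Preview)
Your overall strategy is correct and coincides with the paper's: both unfold $Y\to U\to T\to S\to R$ outside the lens, expand $R$ and $P^{(\infty)}$ at infinity, and feed the result into \eqref{eq:BnCn} and \eqref{eq:BnY1Y2}. However, there are two concrete problems in the details.

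First, your claimed form for $R_1$ is incorrect. The Airy matching produces a $\Delta_1(z)$ with a \emph{double} pole at $z=\pm 1$ (the order is $\lfloor\frac{3k+1}{2}\rfloor=2$ for $k=1$; see Lemma~\ref{lem:Deltak} and \eqref{eq:R1residues}), so outside the disks
\[
R_1(z)=\frac{\Delta_{1,-2}}{(z-1)^2}+\frac{\Delta_{1,-1}}{z-1}+\frac{\widetilde\Delta_{1,-2}}{(z+1)^2}+\frac{\widetilde\Delta_{1,-1}}{z+1}.
\]
This happens not to damage $C^{(1)}$, since only the $1/z$ coefficient $R_1^{(1)}=\Delta_{1,-1}+\widetilde\Delta_{1,-1}$ enters there, but the simple-pole claim is false and would derail any higher-order computation. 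Moreover, the residues $\Delta_{1,-1}$, $\widetilde\Delta_{1,-1}$ are \emph{not} simply proportional to $D(\infty)L_{\pm 1}D(\infty)^{-1}$: they contain additional scalar blocks in $h(\pm 1)$, $h'(\pm 1)$ (see \eqref{eq:Delta1}, \eqref{eq:Deltat1}). The $L_{\pm 1}$ term survives only because the specific combination $D(\infty)D(\infty)^*(\Delta_{1,-1})_{21}-(\Delta_{1,-1})_{12}D(\infty)^{-*}D(\infty)^{-1}$ kills the scalar pieces; that cancellation is part of the computation, not a structural fact about $A_{\pm 1}$.

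Second, and more seriously, your argument for $B^{(0)}=dI_r$ skips a genuine obstacle. Using \eqref{eq:BnY1Y2} and unfolding gives (cf.\ \eqref{eq:BnN0})
\[
B_{N,N}=d I_r+c\bigl(U^{(1)}_{11}-(U^{(2)}_{12})^*(U^{(1)}_{12})^{-1}\bigr)-(N{+}1)\,d\bigl(I_r+(U^{(1)}_{12})^*(U^{(1)}_{12})^{-1}\bigr),
\]
where the last term carries an explicit factor $N{+}1$. It is $\mathcal{O}(N^{-1})$ only because $R^{(1)}_{12}+(R^{(1)}_{12})^*=\mathcal{O}(N^{-2})$, which in turn relies on $L_1$ and $L_{-1}$ being \emph{Hermitian} (a consequence of $L(z)=D(z)^{-1}M(z)D(\overline z)^{-*}$). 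Without invoking that symmetry the a priori estimate is $\mathcal{O}(1)$, not $o(1)$, and the ``Chebyshev recurrence'' heuristic does not suffice to conclude $B^{(0)}=dI_r$.
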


\begin{remark}
    \normalfont
    In the scalar case, Bleher and Its \cite[Section 5]{BI2005} show that if $v(x)$ is one-cut regular (meaning that the corresponding equilibrium measure is supported on a single interval and has square root vanishing at the endpoints), then the recurrence coefficients (with a suitable shift in the argument of $B_{N,N}$) admit asymptotic expansions in powers of $N^{-2}$. This  is a refinement of the general asymptotic result that originates from the steepest descent analysis, and it is proved using the string equations, which are nonlinear equations for these recurrence coefficients that can be obtained in different ways. In the matrix case, string equations are available (see for instance \cite{DER2021}), 
and their analysis is in principle possible, but we do not address it in this paper. Formula \eqref{eq:asympBnCn_theorem}, however, indicates that it is no longer true in general that odd terms in the expansion vanish in the matrix case. 
\end{remark}
\begin{remark}
\normalfont
    This asymptotic result is consistent with the explicit formulas obtained in \cite{DG2005} for $2\times 2$ Hermite MVOPs. We note however that in that reference (and later ones), the weight function is presented with the matrix $A^\ast$ instead of $A$ (in our notation), and this requires modification of the matrix Szeg\H{o} function. Additionally, it is necessary to scale $a\mapsto a N^{-1/2}$, and the recurrence coefficients $B_n\mapsto B_n N^{-1/2}$ and $C_n\mapsto C_n N^{-1}$.
\end{remark}

The steepest descent analysis allows us to give the asymptotic behavior of the matrix norm $\mathcal{H}_N$ as well:
\begin{theorem}\label{th:Hn}
\normalfont 
The norms of the MVOPs have the following asymptotic behavior:

\begin{equation}\label{eq:asympHNN}
\begin{aligned}
\mathcal{H}_{N,N}
\sim
\pi
c^{2N+1} \ee^{N\ell}
D(\infty)
\left[
I_r
+
\sum_{j=1}^{\infty}
\frac{\mathcal{H}^{(j)}}{N^j}
\right]
D(\infty)^\ast.
\end{aligned}
\end{equation}
where the first term is 
\begin{equation}\label{eq:H1}
\begin{aligned}
\mathcal{H}^{(1)}
&=
\frac{4h(1)-3h'(1)}{24h(1)^2}I_r+\frac{L_1(2\sqrt{2}I_r+L_1)}{24h(1)^2}\\
&+
\frac{4h(-1)I_r+3h'(-1)}{24h(-1)^2}I_r
+\frac{L_{-1}(2\sqrt{2}I_r-L_{-1})}{24h(-1)^2}.
    \end{aligned}
\end{equation}
Here the matrix $D(\infty)$, the function $h(x)$, the constant $\ell$ and the matrices $L_{\pm 1}$ are the same as in the previous theorem.
\end{theorem}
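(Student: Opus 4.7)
The starting point is the identity $\mathcal{H}_{n,N}=-2\pi\ii Y_{n,12}^{(1)}$ established in the proof of the proposition in Section \ref{sec:RHid}. Thus one only needs to read off the subleading coefficient of the $(1,2)$-block of $Y_{N,N}(z)$ at infinity, and the plan is to do this by undoing the chain of Deift--Zhou transformations $Y\to T\to S\to R$ already used to prove Theorem \ref{th:MVOPs}. After the affine change $z\mapsto cz+d$, outside the lens and outside neighborhoods of $\pm 1$ one has
\begin{equation*}
Y(z)=\ee^{\frac{N\ell}{2}\sigma_3}R(z)P^{(\infty)}(z)\ee^{N(g(z)-\frac{\ell}{2})\sigma_3},
\end{equation*}
where $P^{(\infty)}$ is the global parametrix built from $D(z)$ and $R(z)=I_{2r}+O(N^{-1})$. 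Using $g(z)=\log z+g_1/z+O(z^{-2})$, $P^{(\infty)}(z)=I_{2r}+P^{(\infty)}_1/z+O(z^{-2})$ (with $P^{(\infty)}_1$ explicit from the expansions of $\varphi(z)^{1/2}$, $(z^2-1)^{-1/4}$ and $D(z)$ at infinity), and $R(z)=I_{2r}+R^{[1]}/z+O(z^{-2})$, one collects the $1/z$ coefficient in the $(1,2)$-block, multiplies by $-2\pi\ii$, and reintroduces the $c^{2N+1}$ factor from the scaling together with $\ee^{N\ell}$ coming from the $g$--$\ell$ pairing. This produces the prefactor $\pi\,c^{2N+1}\ee^{N\ell}\,D(\infty)(\,\cdot\,)D(\infty)^{\ast}$, the bracket being equal to $I_r+\sum_{j\geq 1}\mathcal{H}^{(j)}/N^j$ and entirely governed by the $1/N$ expansion of $R$.

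The coefficient $R^{[1]}$ itself has a $1/N$ expansion coming from the standard small-norm RHP for $R$: its jump is supported on small circles around $\pm 1$ and equals $P^{(\pm 1)}(P^{(\infty)})^{-1}$, where $P^{(\pm 1)}$ are the Airy parametrices built from the conformal map $f$ in \eqref{eq:fz}. Inserting the known asymptotic expansion of the Airy parametrix and applying the Cauchy integral yields
\begin{equation*}
R(z)=I_{2r}+\frac{1}{N}\left(\frac{R^{(1)}_1}{z-1}+\frac{R^{(-1)}_1}{z+1}\right)+O(N^{-2}),
\end{equation*}
with each residue an explicit function of $D(\pm 1)$, the matrices $L_{\pm 1}$, and the Taylor coefficients of $f$ at $\pm 1$; the latter are translated into $h(\pm 1)$ and $h'(\pm 1)$ by \eqref{eq:defpsit}. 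Reading off the $(1,2)$-block at $z=\infty$ and conjugating by $D(\infty)^{\sigma_3}$ then produces the two endpoint contributions in \eqref{eq:H1}.

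The main obstacle will be the matrix bookkeeping at the two endpoints. At each of $z=\pm 1$, two separate pieces must be combined: the scalar part of the Airy asymptotics, which contributes $(4h(\pm 1)\mp 3h'(\pm 1))/(24h(\pm 1)^2)$ together with the universal constant $1/24$ inherited from the Airy function coefficients, and the matrix-valued half-integer Taylor expansion of $L(z)=D(z)^{-1}\ee^{Az}\ee^{A^{\ast}z}D(\overline{z})^{-\ast}$, which after conjugation contributes the $L_{\pm 1}$--dependent terms. Engineering the combinations $L_{\pm 1}(2\sqrt{2}\,I_r\pm L_{\pm 1})$ with the correct $+/-$ pattern, and correctly tracking the sign change at $z=-1$ induced by the opposite orientation of the local conformal map, is where care is needed; beyond that the computation reduces to routine series manipulation.
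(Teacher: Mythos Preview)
Your approach is essentially the same as the paper's: start from $\mathcal{H}_{N,N}=-2\pi\ii Y^{(1)}_{N,12}$, unwind the steepest descent transformations outside the lens to write $Y_{12}$ in terms of $R$ and $P^{(\infty)}$, and then feed in the $1/N$ expansion of $R$ obtained from the Airy matching on the circles $\partial D_{\delta}(\pm 1)$, using the explicit residues computed from $\Delta_{1}(z)$ and its analogue at $z=-1$.

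One technical slip to flag: your displayed formula
\[
R(z)=I_{2r}+\frac{1}{N}\left(\frac{R^{(1)}_1}{z-1}+\frac{R^{(-1)}_1}{z+1}\right)+O(N^{-2})
\]
is not quite right. For an Airy local parametrix the first correction $\Delta_1(z)$ has a pole of order $\bigl[\tfrac{3\cdot 1+1}{2}\bigr]=2$ at each endpoint, so the Sokhotski--Plemelj solution for $R_1(z)$ outside the discs carries both $(z\mp 1)^{-2}$ and $(z\mp 1)^{-1}$ terms (cf.\ the paper's \eqref{eq:R1residues}). This does not spoil your computation of $\mathcal{H}^{(1)}$, since the double-pole pieces are $O(z^{-2})$ at infinity and hence do not contribute to $R^{[1]}=\lim_{z\to\infty}z(R(z)-I_{2r})$; only the simple-pole residues $\Delta_{1,-1}$ and $\widetilde{\Delta}_{1,-1}$ enter, exactly as you go on to use. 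But the intermediate formula for $R(z)$ should include those second-order terms.
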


\begin{corollary}
    \normalfont
    The recurrence coefficients for $A_{N,N}$ and $\widehat{B}_{N,N}$ for orthonormal polynomials, see \eqref{eq:TTRR_orthonorm}, satisfy
    \begin{equation}\label{eq:asympANBN}
A_{N,N}
=
\frac{c}{2}I_r+\mathcal{O}(N^{-1}), \qquad     \widehat{B}_{N,N}
=
d I_r+\mathcal{O}(N^{-1}),
    \end{equation}
as $N\to\infty$.
\end{corollary}
\begin{proof}
\normalfont
From Remark \ref{rem:orthonorm}, taking $n=N$, we observe the following:
\begin{equation}\label{eq:CNAN}
C_{N,N}
=
\kappa_{N-1,N}^{-1}A_{N,N}A_{N,N}^\ast \kappa_{N-1,N}
=
\kappa_{N,N}^{-1}A_{N,N}^\ast A_{N,N}\kappa_{N,N}.
\end{equation}
From Theorem \ref{th:Hn}, writing 
$\Lambda_N=\pi
c^{2N+1} \ee^{N\ell}
D(\infty)D(\infty)^\ast$, we obtain 
\[
\mathcal{H}_{N,N}=(\kappa_{N,N}^\ast \kappa_{N,N})^{-1}
=
\Lambda_N \left(I_r+\mathcal{O}(N^{-1})\right), 
\]
as $N\to\infty$, since $D(\infty)$ is independent of $N$. So,
\begin{equation}\label{eq:kappaNkappaN}
\kappa_{N,N}^\ast \kappa_{N,N}
=
\left(I_r+\mathcal{O}(N^{-1})\right)\Lambda_N ^{-1}.
\end{equation}
Up to unitary factors on the left, which must remain bounded in $N$, we can take 
\begin{equation}\label{eq:kappaN}
\kappa_{N,N}=\Lambda_N^{-1/2} \left(I_r+\mathcal{O}(N^{-1})\right).
\end{equation}
With this choice, it follows that
\[
\begin{aligned}
\kappa_{N,N}^\ast\kappa_{N,N}
&=
\left(I_r+\mathcal{O}(N^{-1})\right)\Lambda_N^{-1} \left(I_r+\mathcal{O}(N^{-1})\right)\\
&=    
\left(I_r+\mathcal{O}(N^{-1})\right)\Lambda_N^{-1} 
+
\left(I_r+\mathcal{O}(N^{-1})\right))\Lambda_N^{-1} \mathcal{O}(N^{-1}),
\end{aligned}
\]
as $N\to\infty$. Since the dependence on $N$ in $\Lambda_N$ appears only in scalar factors, we have 
\[
\Lambda_N^{-1} \mathcal{O}(N^{-1})
=
\Lambda_N^{-1} \mathcal{O}(N^{-1})\Lambda_N\Lambda_N^{-1}
=
\mathcal{O}(N^{-1})\Lambda_N^{-1},
\]
and we recover \eqref{eq:kappaNkappaN}. As a consequence, combining \eqref{eq:CNAN} with \eqref{eq:kappaN}, we obtain
\[
\begin{aligned}
    A_{N,N}^\ast A_{N,N} 
    &=
    \kappa_{N,N} C_{N,N} \kappa_{N,N}^{-1}\\
    &=
    \Lambda_N^{-1/2}\left(I_r+\mathcal{O}(N^{-1})\right)
    \left(\frac{c^2}{4}I_r+\mathcal{O}(N^{-1})\right)
    \left(I_r+\mathcal{O}(N^{-1})\right)\Lambda_N^{1/2}\\
    &=
    \frac{c^2}{4}I_r+\mathcal{O}(N^{-1}).
     \end{aligned}
\]
From this result, we obtain
\[
A_{N,N}
=
\frac{c}{2}I_r+\mathcal{O}(N^{-1}).
\]
Similarly, the other recurrence coefficient satisfies
\[
\begin{aligned}
    \widehat{B}_{N,N}
=
\kappa_{N,N}B_{N,N}\kappa_{N,N}^{-1}
&=
\Lambda_N^{-1/2}\left(I_r+\mathcal{O}(N^{-1})\right)
    \left(d I_r+\mathcal{O}(N^{-1})\right)
    \left(I_r+\mathcal{O}(N^{-1})\right)\Lambda_N^{1/2}\\
    &=
    d I_r+\mathcal{O}(N^{-1}),
\end{aligned}
\]
which concludes the proof.
\end{proof}

This asymptotic behavior of the recurrence coefficients 
$A_{N,N}$ and $\widehat{B}_{N,N}$ 
for orthonormal MVOPs can be combined with the result by Delvaux and Dette in \cite[Section 3]{DD_2012} in order to calculate the limiting zero counting measure of $\det P_{N,N}(z)$. In the notation of that reference, if we write
\begin{equation}
    \lim_{n/N\to s} A_{n,N}:=A_s, \qquad
    \lim_{n/N\to s} \widehat{B}_{n,N}:=B_s, 
\end{equation}
then from the previous discussion, we have limit values when $s=1$, namely $A_1=\frac{c}{2}I_r$ and $B_1=dI_r$. We consider the following equation:
\begin{equation}
    0
    =
    f_1(z,x):=
    \det(A_1^\ast z+B_1+A_1 z^{-1}-xI_r)
\end{equation}
In our case, since the limits $A_1$ and $B_1$ are multiples of the identity matrix, this becomes:
\begin{equation}
    0
    =
    \det\left(\frac{cz}{2}I_r +d I_r+\frac{c}{2}I_r-xI_r\right)
    =
    \prod_{k=1}^r \left(\frac{cz}{2}+d +\frac{c}{2z}-x\right)
    =
    \left(\frac{cz}{2}+d +\frac{c}{2z}-x\right)^r.
\end{equation}
The solutions $z=z(x)$ are 
\begin{equation}\label{eq:zx}
    z_{\pm}(x)=\frac{2(x-d)\pm 2\sqrt{(x-d)^2-c^2}}{c},
\end{equation}
each with multiplicity $r$. In this case, the set that attracts the zeros of MVOPs (via eigenvalues of block Toeplitz matrices) is 
\begin{equation}
    \Gamma_0
    =
    \{x\in\mathbb{C}:|z_-(x)|=|z_+(x)|\}.
\end{equation}
Such equality holds if $(x-d)^2-c^2\leq 0$, and this corresponds to $x\in[a,b]$, using \eqref{eq:ctdt}. This is consistent with the results that we obtain from the asymptotic expansion of the MVOPs, see Remark \ref{rem:zeros}.

More generally, if we consider the limit $n/N \to s$, for $s>0$, then the roots in \eqref{eq:zx}, and consequently the set $\Gamma_0$ will depend on $s$. We write $\Gamma_0(s)$ to indicate this dependence.

If we consider the normalized zero counting measure
\begin{equation}
    \nu_{n,N}=\frac{1}{rn}\sum_{k=1}^{rn}\delta(x-x_{k,N}),
\end{equation}
where $x_{k,N}$ are the zeros of $\det P_{N,N}(x)$, counted with multiplicities, then Theorem 3.2 in \cite[Section 3]{DD_2012} gives a general formula for the logarithmic potential of the limit measure $\mu_{0,1}$ as $n/N\to 1$:
\begin{equation}
    \int \log|x-t|\dd\mu_{0,1}(t)
    =\frac{1}{r}
    \int_0^1 \log|z_1(x,u)\cdots z_r(x,u)|\dd u+C, \qquad x\in\mathbb{C}\setminus \displaystyle\cup_{0\leq u\leq 1}\Gamma_0(x,u).
\end{equation}
For computational purposes, this formula requires (explicit) knowledge of how the roots (and the support of the equilibrium measure) change with the parameter $s$, which is complicated for a generic polynomial potential $v(x)$ in \eqref{eq:Wx}.

\section{Steepest descent analysis}

\subsection{First transformation. Normalization at infinity}
Our starting point is the Riemann--Hilbert problem for MVOPs in Section \ref{sec:RHid}. 
Using the constants $c$ and $d$ given in \eqref{eq:ctdt} and bearing in mind Remark \ref{rem:sigma3}, we make the first transformation, which is just a shift/scaling of the interval:
\begin{equation}\label{eq:UY}
V(z)=
c^{-N\sigma_3}Y(cz+d).
\end{equation}
This matrix satisfies the following Riemann--Hilbert problem:
\begin{enumerate}
\item $V(z)$ is analytic in $\mathbb{C}\setminus \mathbb{R}$.
\item For $x\in\mathbb{R}$, the matrix $V(z)$ has boundary values $V_{\pm}(x)=\lim_{\varepsilon\to 0}V(x\pm\ii\varepsilon)$, that satisfy the following jump condition:
\begin{equation}
V_+(x)
=
V_-(x)
\begin{pmatrix}
I_r & \ee^{-N v(cx+d)}M(x)\\
0_r & I_r
\end{pmatrix},
\qquad 
M(x)=Q(cx+d)Q(cx+d)^{\ast}.
\end{equation}
\item As $z\to\infty$, we have the asymptotic behavior
\begin{equation}\label{eq:asympU}
V(z)
=
\left(I_{2r}+\frac{U^{(1)}}{z}+\mathcal{O}(z^{-2})\right) z^{N\sigma_3}
\end{equation}
\end{enumerate}

By direct calculation, we can see that the first correction matrix $V^{(1)}$ can be written in terms of $Y^{(1)}$ as
\begin{equation}\label{eq:U1Y1}
V^{(1)}
=
c^{-1}
\left[c^{-N\sigma_3}
Y^{(1)}
c^{N\sigma_3}
+
dN\sigma_3\right].
\end{equation}

\subsection{First transformation. Normalization at infinity}
With the assumptions that we have set, the support of the equilibrium measure for $v(x)$ is a single interval $[a,b]$ which is mapped to $[-1,1]$ by the change of variable $z\mapsto cz+d$, recall \eqref{eq:ctdt}. If $v(x)$ is a polynomial of degree $2m$, see \eqref{eq:Wx}, then the density of the equilibrium measure is given by
\begin{equation}\label{eq:defpsit}
  \psi(x)=\frac{1}{2\pi}h(x)\sqrt{1-x^2}, \qquad x\in[-1,1].
\end{equation}
where $h(x)$ is a polynomial of degree $2m-2$, given by
\begin{equation}
\frac{\frac{\dd}{\dd x}v(cx+d)}{(x^2-1)^{1/2}}
=
h(x)+\mathcal{O}(x^{-1}),\qquad x\to\infty.
\end{equation}
We assume that $h(x)>0$ for $x\in(-1,1)$, to avoid critical cases arising from vanishing of the density of the equilibrium measure.


The $g$-function that we use to normalize the Riemann--Hilbert problem at infinity is 
\begin{equation}\label{eq:gz}
g(z)
=
\int_{-1}^{1} \log(z-s)\psi(s)\dd s,
\end{equation}
 which is analytic in $\mathbb{C}\setminus(-\infty,1]$. This function satisfies the Euler--Lagrange variational conditions
\begin{equation}\label{eq:variational}
g_{+}(x)+g_{-}(x)-v(x)-\ell
\begin{cases}
    =0, &\qquad  \textrm{a.e.}\,\, x\in (-1,1),\\
    < 0, & \qquad x\in \mathbb{R}\setminus[-1,1],
\end{cases}
\end{equation}
for some constant $\ell$. We also define
\begin{equation}\label{eq:phit}
\phi(z)
=
\int_{1}^z h(s)(s^2-1)^{1/2}\dd s, \qquad 
\widetilde{\phi}(z)
=
\int_{-1}^z h(s)(s^2-1)^{1/2}\dd s.
\end{equation}
The function $\phi(z)$ is analytic in $\mathbb{C}\setminus(-\infty,1]$, and $\widetilde{\phi}(z)$ is analytic in $\mathbb{C}\setminus[-1,\infty)$, and the contours of integration do not cross the corresponding cuts. 


 Alternatively, we can define
 \begin{equation}\label{eq:phitphi}
     \widetilde{\phi}(z)
     =
     \phi(z)\pm 2\pi\ii, 
     \qquad \pm \textrm{Im}\, z>0.
 \end{equation}

We make the following transformation:
\begin{equation}\label{eq:UtoT}
T(z)
=
\ee^{-\frac{N}{2}\ell\sigma_3}
U(z)
\ee^{-N\left(g(z)-\frac{\ell}{2}\right)\sigma_3}.
\end{equation}
Then this new matrix satisfies the following RHP:
\begin{enumerate}
\item $T(z)$ is analytic in $\mathbb{C}\setminus\mathbb{R}$.
\item For $x\in\mathbb{R}$, we have the jump
\begin{equation}
T_+(x)
=
T_-(x)
\begin{cases}
\begin{pmatrix}
I_r & \ee^{-N\tilde{\phi}(x)}M(x)\\
0_r & I_r
\end{pmatrix},& \qquad x<-1,\\
\begin{pmatrix}
\ee^{N\phi_{+}(x)}I_r & M(x)\\ 
0_r & \ee^{-N\phi_{+}(x)}I_r\\ 
\end{pmatrix},& \qquad -1<x<1,\\
\begin{pmatrix}
I_r & \ee^{-N\phi(x)}M(x)\\
0_r & I_r
\end{pmatrix},& \qquad x>1.
\end{cases}
\end{equation}

\item As $z\to\infty$, we have the asymptotics
\begin{equation}
T(z)
=
I_{2r}
+\frac{T^{(1)}}{z}+\mathcal{O}(z^{-2}).
\end{equation}
\end{enumerate}

\subsection{Second transformation. Opening of a lens}
In the next step, we factor the jump matrix on the interval $(-1,1)$:
\begin{multline}
\begin{pmatrix}
\ee^{N\phi_{+}(x)}I_r & M(x)\\
0_r & \ee^{-N\phi_{+}(x)}I_r
\end{pmatrix}\\
=
\begin{pmatrix}
I_r & 0_r  \\ 
 \ee^{N\phi_{-}(x)}M(x)^{-1}& I_r
\end{pmatrix}
\begin{pmatrix}
0_r & M(x)\\ 
-M(x)^{-1} & 0_r
\end{pmatrix}
\begin{pmatrix}
I_r & 0_r  \\ 
\ee^{N\phi_{+}(x)}M(x)^{-1}& I_r
\end{pmatrix}.
\end{multline}

\begin{figure}    \centerline{\includegraphics[scale=1]{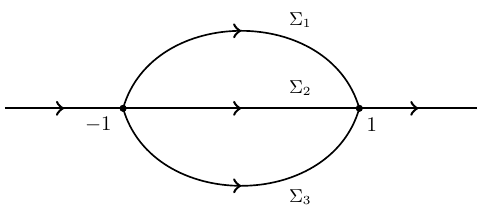}}
    \caption{Lens for the $T\mapsto S$ transformation.}
    \label{fig:lens}
\end{figure}

We open a lens-shaped domain around the interval $(-1,1)$, with contours $\Sigma_1\cup \Sigma_2\cup\Sigma_3$, labelled downwards, see Figure \ref{fig:lens}, and we define the following matrix:
\begin{equation}
S(z)
=
T(z)
\begin{cases}
\begin{pmatrix}
I_r & 0_r\\
-\ee^{N\phi(z)}M(z)^{-1} & I_r
\end{pmatrix},& \qquad \textrm{in the upper part of the lens},\\
\begin{pmatrix}
I_r & 0_r\\
\ee^{N\phi(z)}M(z)^{-1} & I_r
\end{pmatrix},& \qquad \textrm{in the lower part of the lens}.
\end{cases}
\end{equation}

This new matrix satisfies the following Riemann--Hilbert problem: 
\begin{enumerate}
\item $S(z)$ is analytic in $\mathbb{C}\setminus \left(\mathbb{R}\cup\Sigma_1\cup\Sigma_3\right)$. 
\item On these contours, it has the jumps
\begin{equation}
S_+(z)
=
S_-(z)
\begin{cases}
\begin{pmatrix}
I_r & \ee^{-N\widetilde{\phi}(z)} M(z)\\
0_r & I_r
\end{pmatrix}, & \qquad z<-1,\\
\begin{pmatrix}
I_r& 0_r\\
\ee^{N\phi(z)}M(z)^{-1} & I_r 
\end{pmatrix}, & \qquad z\in\Sigma_1\cup\Sigma_3,\\
\begin{pmatrix}
0_r & M(z)\\ 
-M(z)^{-1} & 0_r
\end{pmatrix}, & \qquad -1<z<1,\\
\begin{pmatrix}
I_r & \ee^{-N\phi(z)}M(z) \\ 
0_r & I_r
\end{pmatrix}, & \qquad z>1.
\end{cases}
\end{equation}
\item As $z\to\infty$, we have $S(z)=I_{2r}+\mathcal{O}(z^{-1})$.
\end{enumerate}

In this extension of the problem to the complex plane, we note that the factorization of the weight that we have to use is $M(z)=Q(z)Q(
\overline{z})^\ast$, to keep analyticity properties.

\subsection{Global parametrix and matrix Szeg\H{o} function}
To construct the global parametrix, we ignore all the jumps of the matrix $S(z)$ that become close to identity as $N\to\infty$, and we solve the following problem: 
\begin{enumerate}
\item $P^{(\infty)}(z)$ is analytic in $\mathbb{C}\setminus [-1,1]$.
\item On this interval it satisfies 
\begin{equation}\label{eq:jumpPinfty}
P^{(\infty)}_+(x)
=
P^{(\infty)}_-(x)
\begin{pmatrix}
0_r & M(x) \\ 
-M(x)^{-1} & 0_r
\end{pmatrix},  \qquad x\in (-1,1).
\end{equation}
\item As $z\to\infty$, we have $P^{(\infty)}(z)=I_{2r}+\mathcal{O}(z^{-1})$.
\end{enumerate}

We solve this global problem using a suitable matrix Szeg\H{o} function $D(z)$, see Proposition \ref{prop:SzegoD}. 
Once we have this matrix Szeg\H{o} function, we construct
\begin{equation}\label{eq:Pinfty}
P^{(\infty)}(z)
=
\begin{pmatrix}
D(\infty) & 0_r\\
0_r & D(\infty)^{-\ast}
\end{pmatrix}
\widetilde{P}^{(\infty)}(z)
\begin{pmatrix}
D(z)^{-1} & 0_r\\
0_r & D(\overline{z})^\ast
\end{pmatrix},
\end{equation}
and it follows that $\widetilde{P}^{(\infty)}(z)$ solves the same global problem but with jump matrix $\begin{pmatrix} 0_r & I_r\\ -I_r & 0_r\end{pmatrix}$. 

By diagonalising the jump for $\widetilde{P}^{(\infty)}(z)$, we can build the solution explicitly, with blocks that are multiples of the identity matrix:
\begin{equation}\label{eq:Ptinfty}
\widetilde{P}^{(\infty)}(z)
=
\frac{1}{2}
\begin{pmatrix}
(\beta(z)+\beta(z)^{-1})I_r & -\ii(\beta(z)-\beta(z)^{-1})I_r\\
\ii(\beta(z)-\beta(z)^{-1})I_r & (\beta(z)+\beta(z)^{-1})I_r
\end{pmatrix}, \qquad \beta(z)=\left(\frac{z-1}{z+1}\right)^{1/4},
\end{equation}
where the root has a cut on $[-1,1]$.

In order to find a factorization with the right properties, we map the interval $[-1,1]$ onto the unit circle:
\begin{equation}\label{eq:xtoz}
     x
     =
     \frac{z+z^{-1}}{2}, \qquad |z|=1.
\end{equation}
 In general, we have the factorization
\begin{equation}
M(z)
=
G(z)G(z)^\ast, \qquad |z|=1,
\end{equation}
where the matrix $G(z)$ is analytic and invertible in the interior of the unit disc $|z|<1$. We use the function $G(z^{-1})^T$ in the exterior of the disc.


\subsection{Local parametrix at $z=1$}
We consider a disc $D_{\delta}(1)$ of fixed radius $\delta>0$ around the endpoint $z=1$. We construct a matrix $P(z)$ that has the same jumps as $S(z)$ inside the disc, so 
we have
\begin{enumerate}
\item $P(z)$ is analytic in $D_{\delta}(1)\setminus \left(\mathbb{R}\cup\Sigma_1\cup\Sigma_3\right)$, see Figure \ref{fig:D1}. 
\item On these contours, it has the jumps
\begin{equation}
P_+(z)
=
P_-(z)
\begin{cases}
\begin{pmatrix}
I_r & 0_r\\
\ee^{N\phi(z)}M(z)^{-1} & I_r 
\end{pmatrix}, & \qquad z\in D_{\delta}(1)\cap (\Sigma_1\cup\Sigma_3),\\
\begin{pmatrix}
0_r & M(z)\\
-M(z)^{-1} & 0_r
\end{pmatrix}, & \qquad 1-\delta<z<1,\\
\begin{pmatrix}
I_r & \ee^{-N\phi(z)}M(z)\\ 
0 & I_r
\end{pmatrix}, & \qquad 1<z<1+\delta.
\end{cases}
\end{equation}
\item Uniformly for $z\in\partial D_{\delta}(1)$, we have the following matching condition with the global parametrix:
\begin{equation}
P(z)=\left(I_{2r}+\mathcal{O}(N^{-1})\right)P^{(\infty)}(z).
\end{equation}
\item As $z\to 1$, $P(z)$ has the same behavior as $S(z)$, in the sense that the product $S(z)P^{-1}
(z)$ remains bounded as $z\to 1$.
\end{enumerate}

\begin{figure}
    \centerline{\includegraphics[scale=1]{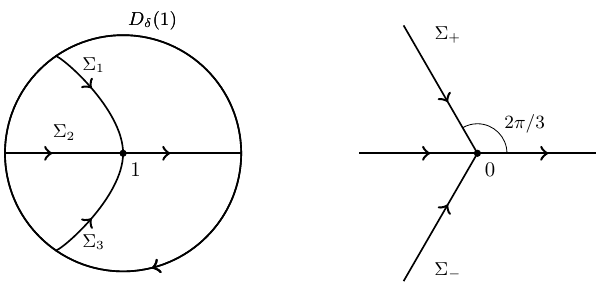}}
    \caption{Disc $D_{\delta}(1)$ for the local parametrix (left) and contours for the RH problem for $\Psi(\zeta)$ (right).}
    \label{fig:D1}
\end{figure}

The first step is to make constant jumps. We write
\begin{equation}\label{eq:P1P}
P(z)
=
E_N(z)P^{(1)}(z)
\begin{pmatrix}
\ee^{\frac{N}{2}\phi(z)} Q(z)^{-1} & 0_r\\
0_r & \ee^{-\frac{N}{2}\phi(z)} Q(\overline{z})^{\ast}
\end{pmatrix},
\end{equation}
where $E_N(z)$ is a suitable analytic prefactor, to be determined later. 

This new matrix $P^{(1)}(z)$ satisfies the following Riemann--Hilbert problem:
\begin{enumerate}
\item $P^{(1)}(z)$ is analytic in $D_{\delta}(1)\setminus \left(\mathbb{R}\cup\Sigma_1\cup\Sigma_3\right)$. 
\item On these contours, it has the jumps
\begin{equation}
P^{(1)}_+(z)
=
P^{(1)}_-(z)
\begin{cases}
\begin{pmatrix}
I_r & 0_r\\
I_r & I_r
\end{pmatrix}, & \qquad z\in D_{\delta}(1)\cap (\Sigma_1\cup\Sigma_3),\\
\begin{pmatrix}
0_r & I_r\\
-I_r & 0_r
\end{pmatrix}, & \qquad 1-\delta<z<1,\\
\begin{pmatrix}
I_r & I_r\\
0_r & I_r
\end{pmatrix}, & \qquad 1<z<1+\delta.
\end{cases}
\end{equation}
\item Uniformly for $z\in\partial D_{\delta}(1)$, we have the following matching condition with the global parametrix:
\begin{equation}
P^{(1)}(z)=\left(I_{2r}+\mathcal{O}(N^{-1})\right)P^{(\infty)}(z)
\begin{pmatrix}
\ee^{\frac{N}{2}\phi(z)} Q(z)^{-1} & 0_r\\
0_r & \ee^{-\frac{N}{2}\phi(z)}Q(\overline{z})^{\ast}
\end{pmatrix}.
\end{equation}
\end{enumerate}

We need a conformal map $f(z)$ from a neighborhood of $z=1$ onto a neighborhood of $\zeta=0$. We take 
\begin{equation}\label{eq:fz}
\frac{\phi(z)}{2}=\frac{2}{3} f(z)^{3/2} 
\Rightarrow
f(z)=\left(\frac{3}{4}\phi(z)\right)^{2/3}.
\end{equation} 
From the definition of $\phi(z)$, we have the local behavior
\begin{equation}\label{eq:localft}
f(z)
=2^{-1/3}h(1)^{2/3}(z-1)\left(1+\mathcal{O}(z-1)\right).
\end{equation}
We set up an auxiliary RH problem, in a new variable $\zeta$: we seek $\Psi(\zeta)$ such that
\begin{enumerate}
\item $\Psi(z)$ is analytic in $\mathbb{C}\setminus\left(\mathbb{R}\cup\Sigma_{\pm}\right)$, where $\Sigma_{\pm}=(0,\infty)\ee^{\pm2\pi\ii/3}$, the real axis oriented from left to right and $\Sigma_{\pm}$ towards the origin, see Figure \ref{fig:D1}. 
\item On these contours, it has the jumps
\begin{equation}
\Psi_+(\zeta)
=
\Psi_-(\zeta)
\begin{cases}
\begin{pmatrix}
I_r & 0_r\\
I_r & I_r
\end{pmatrix}, & \qquad \zeta\in \Sigma_{\pm},\\
\begin{pmatrix}
0_r & I_r\\
-I_r & 0_r
\end{pmatrix}, & \qquad  \zeta\in\mathbb{R}^{-},\\
\begin{pmatrix}
I_r & I_r\\
0_r & I_r
\end{pmatrix}, & \qquad  \zeta\in\mathbb{R}^+.
\end{cases}
\end{equation}
\item As $\zeta\to\infty$, we have the asymptotic behavior
\begin{equation}
\Psi(\zeta)
=
\zeta^{-\sigma_3/4}
\frac{1}{\sqrt{2}}
\begin{pmatrix}
I_r & \ii I_r\\
\ii I_r & I_r
\end{pmatrix}
(I_{2r} +\mathcal{O}(\zeta^{-3/2}))
\ee^{-\frac{2}{3}\zeta^{3/2}\sigma_3}.
\end{equation}
\item As $\zeta\to 0$, $\Psi(\zeta)$ remains bounded.
\end{enumerate}

We can solve the local problem with a suitable combination of Airy functions (we understand the following formula with $r\times r$ blocks, and we skip all the $I_r$ factors throughout for brevity:
\begin{align}\label{eq: Airy parametrix}
					\Psi(\zeta) =\sqrt{2\pi}
					\begin{cases}
					\begin{pmatrix}
					y_0(\zeta) & -y_2(\zeta) \\
					-\ii y_0'(\zeta) & \ii y_2'(\zeta) 
					\end{pmatrix}, \qquad & \arg \zeta \in \left(0, \frac{2\pi}{3}\right), \\
					\begin{pmatrix}
					-y_1(\zeta) & -y_2(\zeta) \\
					\ii y_1'(\zeta) & \ii y_2'(\zeta) 
					\end{pmatrix}, \qquad & \arg \zeta \in \left(\frac{2\pi}{3},\pi\right),\\
					\begin{pmatrix}
					-y_2(\zeta) & y_1(\zeta) \\
					\ii y_2'(\zeta) & -\ii y_1'(\zeta) 
					\end{pmatrix}, \qquad & \arg \zeta \in \left(-\pi, -\frac{2\pi}{3}\right),\\
					\begin{pmatrix}
					y_0(\zeta) & y_1(\zeta) \\
					-\ii y_0'(\zeta) & -\ii y_1'(\zeta) 
					\end{pmatrix}, \qquad & \arg \zeta \in \left(-\frac{2\pi}{3},0\right).
					\end{cases}
				\end{align}
				where 
				\begin{equation}\label{eq: Airy functions}
					y_0(\zeta) := \text{Ai}(\zeta), \qquad y_1(\zeta):=\omega\text{Ai}(\omega \zeta), \qquad y_2(\zeta):=\omega^2 \text{Ai}(\omega^2 \zeta),
				\end{equation}
where $\text{Ai}$ is the Airy function and $\omega:=\exp\left(2\pi\ii/3\right)$. We use the identity $y_0(\zeta)+y_1(\zeta)+y_2(\zeta)=0$.

We can actually write more detailed asymptotics for the local parametrix: \begin{equation}\label{eq:asympA_full}
\Psi(\zeta)
\sim 
\frac{1}{\sqrt{2}}
\zeta^{-\sigma_3/4}
    \begin{pmatrix}
    I_r & \ii I_r\\
    \ii I_r & I_r
    \end{pmatrix}
    \left(I_{2r}+\sum_{k=1}^{\infty}\frac{\Psi_k}{\zeta^{3k/2}}\right)
\ee^{-\frac{2}{3}\zeta^{3/2}\sigma_3},
\end{equation}
for $|\arg\,\zeta|<\pi$, where the coefficients for $k\geq 1$ are
\begin{equation}\label{eq:Psik}
    \Psi_k
    =
    \frac{(3/2)^k}{2}
    \begin{pmatrix}
    (-1)^k (u_k+v_k)I_r & \ii(u_k-v_k)I_r\\
   (-1)^{k+1} \ii(u_k-v_k)I_r & (u_k+v_k) I_r
    \end{pmatrix}
\end{equation}
with $u_0=v_0=1$ and 
\begin{equation}\label{eq:uk}
u_{k}=\frac{(2k+1)(2k+3)(2k+5)\cdots(6k-1)}{216^{k}k!}
=
\frac{\Gamma\left(3k+\frac{1}{2}\right)}{54^k k!\Gamma\left(k+\frac{1}{2}\right)}
,\qquad
v_k=\frac{6k+1}{1-6k}u_k,
\end{equation}
for $k\geq 1$, see \cite[9.7.2]{NIST:DLMF}.

If we set $\widetilde{P}(z)=\Psi(N^{2/3} f(z))$, then the local parametrix is
\begin{equation}\label{eq:P1}
P(z)
=
E_N(z)
\Psi(N^{2/3} f(z))
\begin{pmatrix}
\ee^{\frac{N}{2}\phi(z)}Q(z)^{-1} & 0_r\\
0_r & \ee^{-\frac{N}{2}\phi(z)}Q(\overline{z})^{\ast}
\end{pmatrix}.
\end{equation}
Since we have the matching condition $P(z)=(I+\mathcal{O}(N^{-1}))P^{(\infty)}(z)$ on the boundary of the disc $D_{\delta}(1)$, using \eqref{eq:asympA_full}, we need 
\begin{equation}\label{eq:EN}
\begin{aligned}
E_N(z)
&=
P^{(\infty)}(z)
\begin{pmatrix}
Q(z) & 0_r\\
0_r & Q(\overline{z})^{-\ast}
\end{pmatrix}
%
\frac{1}{\sqrt{2}}
\begin{pmatrix}
I_r & -\ii I_r\\
-\ii I_r & I_r
\end{pmatrix}
(N^{2/3}f(z))^{\sigma_3/4}.
\end{aligned}
\end{equation}

\begin{lemma}
    \normalfont
    The matrix $E_N(z)$ is an analytic matrix valued function in $D_{\delta}(1)$.
\end{lemma}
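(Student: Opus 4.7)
\normalfont
The plan is to establish analyticity in two stages: first check that the apparent jump of $E_N(z)$ across $(1-\delta,1)$ cancels (the only interval inside $D_{\delta}(1)$ where $E_N$ could have a jump), and then show that the remaining isolated singularity at $z=1$ is removable.

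For the first stage, which I expect to be the main algebraic obstacle, I observe that within $D_{\delta}(1)$ the only non-entire factors in the definition \eqref{eq:EN} of $E_N(z)$ are $P^{(\infty)}(z)$, which jumps on $(1-\delta,1)$ according to \eqref{eq:jumpPinfty}, and $(N^{2/3} f(z))^{\sigma_3/4}$, which jumps on the same interval because $f(x)<0$ there by \eqref{eq:localft}. Writing $L=\tfrac{1}{\sqrt{2}}\begin{pmatrix} I_r & -\ii I_r\\ -\ii I_r & I_r\end{pmatrix}$ for the constant rotation appearing in \eqref{eq:EN}, I would compute the boundary-value ratio $(f)_+^{\sigma_3/4}(f)_-^{-\sigma_3/4}=\diag(\ii I_r,-\ii I_r)$ using the principal fourth root, verify the identity $L\,\diag(\ii I_r,-\ii I_r)\,L^{-1}=\begin{pmatrix} 0_r & -I_r\\ I_r & 0_r\end{pmatrix}$, and then conjugate by $\diag(\ee^{Az},\ee^{-A^\ast z})$ to obtain $\begin{pmatrix} 0_r & -M(z)\\ M(z)^{-1} & 0_r\end{pmatrix}$, where $M(z)=\ee^{Az}\ee^{A^\ast z}$. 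Combining this with the jump matrix of $P^{(\infty)}(z)$ from \eqref{eq:jumpPinfty}, the product $\begin{pmatrix} 0_r & M\\ -M^{-1} & 0_r\end{pmatrix}\begin{pmatrix} 0_r & -M\\ M^{-1} & 0_r\end{pmatrix}$ collapses to $I_{2r}$, so $E_{N,+}(x)=E_{N,-}(x)$ on $(1-\delta,1)$ and $E_N$ extends to a function analytic in $D_{\delta}(1)\setminus\{1\}$.

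For the second stage, which is essentially routine, I estimate the growth at $z=1$. From the explicit form \eqref{eq:Ptinfty} and $\beta(z)^{\pm 1}=\mathcal{O}((z-1)^{\pm 1/4})$ one gets $\widetilde{P}^{(\infty)}(z)=\mathcal{O}((z-1)^{-1/4})$. Since $M(x)$ is smooth and positive definite at $x=1$, classical Szeg\H{o} theory guarantees that $D(z)$ and $D(\overline{z})^{\ast}$ both admit bounded and invertible limits as $z\to 1$ from $\mathbb{C}\setminus[-1,1]$; thus $P^{(\infty)}(z)=\mathcal{O}((z-1)^{-1/4})$ via \eqref{eq:Pinfty}. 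The diagonal entries of $(N^{2/3}f(z))^{\sigma_3/4}$ are $\mathcal{O}((z-1)^{\pm 1/4})$, while $L$, $\ee^{\pm Az}$, and $\ee^{\pm A^\ast z}$ are bounded near $z=1$. Multiplying these bounds yields $E_N(z)=\mathcal{O}((z-1)^{-1/2})$, so $(z-1)E_N(z)\to 0$ as $z\to 1$, and Riemann's removable singularity theorem extends $E_N$ analytically to all of $D_{\delta}(1)$.

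The hardest point is the jump cancellation in the first stage: it depends on the precise compatibility between the Szeg\H{o} factorization $M(x)=D_{\pm}(x)D_{\pm}(x)^\ast$ used to build $P^{(\infty)}(z)$, the diagonalising rotation $L$ coming from the scalar model \eqref{eq:Ptinfty}, and the block-diagonal exponential $\diag(\ee^{Az},\ee^{-A^\ast z})$ whose off-diagonal regrouping reproduces $M(z)$. Once this structural identity is in place, the growth bound at $z=1$ and the invocation of Riemann's theorem are straightforward.
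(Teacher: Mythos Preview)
Your argument is correct and the first stage (jump cancellation on $(1-\delta,1)$) matches the paper exactly; the paper simply asserts that ``direct calculation shows $E_{N-}(x)^{-1}E_{N+}(x)=I_{2r}$'' while you spell out the mechanism. The second stage, however, differs in a meaningful way. You bound $E_N(z)=\mathcal{O}((z-1)^{-1/2})$ by combining the crude estimates $\widetilde P^{(\infty)}(z)=\mathcal{O}((z-1)^{-1/4})$ and $f(z)^{\pm 1/4}=\mathcal{O}((z-1)^{\pm 1/4})$, then invoke Riemann's removable singularity theorem. The paper instead expands $E_N(z)$ as a combination of two explicit $2r\times r$ blocks and reduces the question to showing that
\[
\Omega_1(z)=D(z)^{-1}\ee^{Az}+D(\overline{z})^\ast\ee^{-A^\ast z},\qquad
\Omega_2(z)=(z-1)^{-1/2}\bigl(D(z)^{-1}\ee^{Az}-D(\overline{z})^\ast\ee^{-A^\ast z}\bigr)
\]
have removable singularities, which it establishes via the polar decomposition of $D_+(x)^{-1}\ee^{Ax}$ and Rellich's theorem on analytic dependence of spectral data. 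Your route is shorter and perfectly adequate for the lemma itself; the paper's detour, though heavier, extracts along the way that $D_+(x)^{-1}\ee^{Ax}$ is unitary at $x=1$ and that $D(z)^{-1}\ee^{Az}$ has a convergent expansion in $(z-1)^{1/2}$ with unitary leading coefficient---facts that are reused later in the analysis of $L(z)$ and of the first correction $\Delta_1(z)$.

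One point to tighten: your appeal to ``classical Szeg\H{o} theory'' for the boundedness and invertibility of $D(z)$ at $z=1$ is a little glib. Proposition~\ref{prop:SzegoD} only guarantees continuous boundary values on the open interval $(-1,1)$. In the present setting the claim is nonetheless true because $D(z)=G(1/\varphi(z))$ with $G$ a matrix \emph{polynomial} satisfying $\det G\equiv\pm 1$ (see the construction following \eqref{eq:DG}), so $G$ and $G^{-1}$ are entire and $D,D^{-1}$ extend continuously to $z=1$. You should say this explicitly rather than invoke general Szeg\H{o} theory.
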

\begin{proof}
\normalfont
    We first check that 
$E_{N+}(x)=E_{N-}(x)$ for $x\in(-1,1)$, which makes use of the fact that $f(z)=f_0(z-1)m(z)$, where $f_0>0$, $m(z)$ is analytic and $h(1)\neq 0$, and then
\begin{equation}
    \left(f(z)^{\alpha}\right)_{\pm}=|f(z)|^{\alpha}\ee^{\pm\alpha\pi\ii}, \qquad x\in(1-\delta,1).
\end{equation}
Then, direct calculation shows that $E_{N-}(x)^{-1}E_{N+}(x)=I_r$ for $x\in(1-\delta,1)$. This implies that any singularity of $E_N(z)$ at $z=1$ must be isolated. 

Now we need to consider the limit behavior as $z\to 1$, to rule out an essential singularity or a pole of $E_N(z)$ at that point: we write
\begin{equation}
\begin{aligned}
E_N(z)
&=
\begin{pmatrix}
D(\infty) & 0_r\\
0_r & D(\infty)^{-\ast}
\end{pmatrix}
\widetilde{P}^{(\infty)}(z)
\begin{pmatrix}
D(z)^{-1}Q(z) & 0_r\\
0_r & D(\overline{z})^\ast Q(\overline{z})^{-\ast}
\end{pmatrix}\\
&\times \frac{1}{\sqrt{2}}
\begin{pmatrix}
I_r & -\ii I_r\\
-\ii I_r & I_r
\end{pmatrix}
(N^{2/3}f(z))^{\sigma_3/4}.
\end{aligned}
\end{equation}
As in \cite[\S 3.5.7]{DKR2023}, we write this as follows:
\begin{equation}
    E_N(z)
    =
    \frac{1}{\sqrt{2}}
    \begin{pmatrix}
        D(\infty) & 0_r\\
        0_r & D(\infty)^{-\ast}
    \end{pmatrix}
    \begin{pmatrix}
        I_r & \ii I_r\\
        \ii I_r & I_r
    \end{pmatrix}
    \left(
    N^{2/3}E^{(1)}(z)+N^{-2/3}E^{(2)}(z)
    \right),
\end{equation}
where $E^{(1)}(z)$ and $E^{(2)}(z)$ are $2r\times r$ matrices, independent of $n$, that can be written as follows:
\begin{equation}
\begin{aligned}
    E^{(1)}(z)
    =
    \begin{pmatrix}
    \beta(z)f(z)^{1/4}\left(D(z)^{-1}Q(z)-D(\overline{z})^\ast Q(\overline{z})^{-\ast}\right)\\
    -\ii \beta(z)^{-1}f(z)^{1/4}
\left(D(z)^{-1}Q(z)+D(\overline{z})^\ast Q(\overline{z})^{-\ast}\right)\\
    \end{pmatrix},\\
    E^{(2)}(z)
    =
    \begin{pmatrix}
    -\ii\beta(z)f(z)^{-1/4}\left(D(z)^{-1}
    Q(z)+
    D(\overline{z})^\ast Q(\overline{z})^{-\ast}\right)\\
    \beta(z)^{-1}f(z)^{-1/4}
\left(-D(z)^{-1}Q(z)+D(\overline{z})^\ast Q(\overline{z})^{-\ast}\right)
    \end{pmatrix}.
\end{aligned}
\end{equation}

Bearing in mind that $\beta(z),f(z)^{1/4}=\mathcal{O}\left((z-1)^{1/4}\right)$ as $z\to 1$, we define
\begin{equation}\label{eq:Omega12}
    \begin{aligned}
        \Omega_1(z)
        &=   D(z)^{-1}Q(z)+D(\overline{z})^{\ast}Q(\overline{z})^{-\ast},\\
        \Omega_2(z)
        &=
        (z-1)^{-1/2}\left(D(z)^{-1}Q(z)-D(\overline{z})^\ast Q(\overline{z})^{-\ast}\right)
    \end{aligned}
\end{equation}

We want to show that $\Omega_1(z)$ and $\Omega_2(z)$ have removable singularities at $z=1$. On the interval $(1-\delta,1)$, using the polar decomposition, we have 
\[
\begin{aligned}
D_+(x)^{-1}Q(x)
=
V(x) M(x)^{-1/2}Q(x)
&=
V(x) (U(x)\Lambda(x)U(x)^{\ast})^{-1/2}Q(x)\\
&=
V(x) U(x)\Lambda(x)^{-1/2}U(x)^{\ast}Q(x),
\end{aligned}
\]
 where $U(x)$ and $V(x)$ are unitary matrices. It follows that $D_+(x)^{-1}Q(x)$ remains bounded as $x\to 1^-$, and moreover $\lim_{x\to 1^-}D_+(x)^{-1}Q(x)$ remains unitary. This is a direct consequence of Rellich's theorem, which states that spectral quantities (eigenvalues and eigenvectors) can be arranged in such a way that they are analytic functions of $z$ in a neighborhood of $z=1$, see \cite[Theorem 1.4.4]{BS2015_4} or \cite{Wimmer1986}.
 

Note that 
$D(\overline{z})^\ast Q(\overline{z})^{-\ast}=\left(D(z)^{-1}Q(z)\right)^{-\ast}$, so the same result applies to this factor. Then, it follows from \eqref{eq:Omega12} that $\Omega_1(x)=\mathcal{O}(1)$ and 
$\Omega_2(x)=\mathcal{O}((x-1)^{-1/2})$ as $x\to 1^-$. This, together with the fact that any singularity at $z=1$ should be isolated, implies that these two factors $\Omega_1$ and $\Omega_2$ have removable singularities at that point. The same is true for $E_N(z)$ and this factor is actually analytic in a neighborhood of $z=1$.
\end{proof}


\begin{remark}\label{rem:Dunit}
\normalfont
If we take $+$ boundary values on the interval, we obtain
\begin{equation}
D_+(x)^{-1}Q(x)\left(D_+(x)^{-1}Q(x)\right)^\ast
=
D_+(x)^{-1}Q(x)Q(x)^{\ast} D_+(x)^{-\ast}
=I_r,
\end{equation}
since $Q(x)Q(x)^\ast=M(x)=D_+(x)D_+(x)^\ast$, because of the properties of the matrix Szeg\H{o} function. Therefore, $D_+(x)^{-1}Q(x)$ is actually a unitary matrix, and the same is true with the $-$ boundary values. It follows that if we write 
\begin{equation}
D(z)^{-1}Q(z)=D_{1,0}+D_{1,1}(z-1)^{1/2}+\mathcal{O}(z-1),\qquad z\to 1,
\end{equation} 
then $D_{1,0}$ is a unitary matrix. Note that $D(z)^{-1}Q(z)$ is not unitary in general for $z$ away from the real axis. 
\end{remark}

\subsection{Local parametrix at $z=-1$}
A similar construction can be done in the disc $D_{\delta}(-1)$. We consider a disc $D_{\delta}(-1)$ of fixed radius $\delta>0$ around the endpoint $z=-1$, and we construct a matrix $\widetilde{P}(z)$ that has the same jumps as $S(z)$. Therefore, the matrix $\widetilde{P}(z)$ solves the following RH problem:
\begin{enumerate}
\item $\widetilde{P}(z)$ is analytic in $D_{\delta}(-1)\setminus \left(\mathbb{R}\cup\Sigma_1\cup\Sigma_3\right)$. 
\item On these contours, it has the jumps
\begin{equation}
\widetilde{P}_+(z)
=
\widetilde{P}_-(z)
\begin{cases}
\begin{pmatrix}
I_r & 0_r\\
\ee^{N\widetilde{\phi}(z)}M(z)^{-1} & I_r 
\end{pmatrix}, & \qquad z\in D_{\delta}(-1)\cap (\Sigma_1\cup\Sigma_3),\\
\begin{pmatrix}
0_r & M(z)\\
-M(z)^{-1} & 0_r
\end{pmatrix}, & \qquad -1<z<-1+\delta,\\
\begin{pmatrix}
I_r & \ee^{-N\widetilde{\phi}(z)}M(z)\\ 
0 & I_r
\end{pmatrix}, & \qquad -1-\delta<z<-1.
\end{cases}
\end{equation}
We have written $\widetilde{\phi}(z)$ instead of $\phi(z)$ in the jumps on account of \eqref{eq:phitphi}.

\item Uniformly for $z\in\partial D_{\delta}(-1)$, we have the following matching condition with the global parametrix:
\begin{equation}
\widetilde{P}(z)=\left(I+\mathcal{O}(N^{-1})\right)P^{(\infty)}(z).
\end{equation}
\end{enumerate}

As before, we build this local parametrix as 
\begin{equation}
\widetilde{P}(z)
=
\widetilde{E}_N(z)
\Psi(N^{2/3} \widetilde{f}(z))
\begin{pmatrix}
\ee^{\frac{N}{2}\widetilde{\phi}(z)}
Q(z)^{-1} & 0_r\\
0_r & \ee^{-\frac{N}{2}\widetilde{\phi}(z)} Q(\overline{z})^{\ast}.
\end{pmatrix}.
\end{equation}

The conformal map now is 
\begin{equation}\label{eq:ftz}
\widetilde{f}(z)
=
\left(\frac{3}{4}\widetilde{\phi}(z)\right)^{2/3},
\end{equation}
with local behavior $\widetilde{f}(z)=\widetilde{f}_0(z+1)\left(1+\mathcal{O}(z+1)\right)$ as $z\to -1$, with $\widetilde{f}_0<0$. So this takes $z=-1$ to $\zeta=0$ and reverses orientation of the contours. As a consequence 
\begin{equation}
\widetilde{P}(z)
=
\widetilde{E}_N(z)
\begin{pmatrix} 
I_r & 0_r\\
0_r & -I_r
\end{pmatrix}
\Psi(N^{2/3} \widetilde{f}(z))
\begin{pmatrix} 
I_r & 0_r\\
0_r & -I_r
\end{pmatrix}
\begin{pmatrix}
\ee^{\frac{N}{2}\widetilde{\phi}(z)}
Q(z)^{-1} & 0_r\\
0_r & \ee^{-\frac{N}{2}\widetilde{\phi}(z)}Q(\overline{z})^{\ast}
\end{pmatrix}.
\end{equation}
Because of the matching on the boundary of the disc, we need 
\begin{equation}\label{eq:ENt}
    \widetilde{E}_N(z)
    =
    P^{(\infty)}(z)
    \begin{pmatrix}
Q(z) & 0_r\\
0_r & Q(\overline{z})^{\ast}
\end{pmatrix}\\
\frac{1}{\sqrt{2}}
\begin{pmatrix}
I_r & \ii I_r\\
\ii I_r & I_r
\end{pmatrix}
(N^{2/3}\widetilde{f}(z))^{\sigma_3/4}.
\end{equation}

Similarly as before, we can check that $\widetilde{E}_N(z)$ does not have a jump on the interval $(-1,-1+\delta)$, by writing
\begin{equation}
    \widetilde{E}_N(z)
    =
    \frac{1}{\sqrt{2}}
    \begin{pmatrix}
        D(\infty) & 0_r\\
        0_r & D(\infty)^{-\ast}
    \end{pmatrix}
    \begin{pmatrix}
        I_r & \ii I_r\\
        \ii I_r & I_r
    \end{pmatrix}
    \left(
    N^{2/3}\widetilde{E}^{(1)}(z)+N^{-2/3}\widetilde{E}^{(2)}(z)
    \right),
\end{equation}
where $\widetilde{E}^{(1)}(z)$ and $\widetilde{E}^{(2)}(z)$ are $2r\times r$ matrices, independent of $n$:
\begin{equation}
\begin{aligned}
    \widetilde{E}^{(1)}(z)
    =
    \begin{pmatrix}
\beta(z)\widetilde{f}(z)^{1/4}\left(D(z)^{-1}Q(z)+
D(\overline{z})^\ast Q(\overline{z})^{-\ast}\right)\\
    -\ii \beta(z)^{-1}\widetilde{f}(z)^{1/4}
\left(D(z)^{-1}Q(z)-D(\overline{z})^\ast Q(\overline{z})^{-\ast}\right)\\
    \end{pmatrix},\\
    \widetilde{E}^{(2)}(z)
    =
    \begin{pmatrix}
    \ii\beta(z)\widetilde{f}(z)^{-1/4}\left(D(z)^{-1}Q(z)-
    D(\overline{z})^\ast Q(\overline{z})^{-\ast}\right)\\
    \beta(z)^{-1}\widetilde{f}(z)^{-1/4}
\left(-D(z)^{-1}Q(z)-D(\overline{z})^\ast Q(\overline{z})^{-\ast}\right)
    \end{pmatrix}.
\end{aligned}
\end{equation}

Since $\beta(z)^{-1},\widetilde{f}(z)^{1/4}=\mathcal{O}\left((z+1)^{1/4}\right)$ as $z\to -1$, we define
\begin{equation}\label{eq:Omegat12}
    \begin{aligned}
        \widetilde{\Omega}_1(z)
        &=
        (z+1)^{-1/2}\left(D(z)^{-1}Q(z)-D(\overline{z})^\ast Q(\overline{z})^{-\ast}\right),\\
        \widetilde{\Omega}_2(z)
        &=
        D(z)^{-1}Q(z)+D(\overline{z})^\ast Q(\overline{z})^{-\ast}.
    \end{aligned}
\end{equation}

Using an argument similar to the one before, we can show that $\widetilde{\Omega}_1(z)$ and $\widetilde{\Omega}_2(z)$ have removable singularities at $z=-1$, and hence $\widetilde{E}_N(z)$ is analytic in a neigborhood of $z=-1$. Also, if we write the local expansion
\begin{equation}
D(z)^{-1}Q(z)=D_{-1,0}+D_{-1,1}(z+1)^{1/2}+\mathcal{O}(z+1),\qquad z\to -1,
\end{equation} 
then $D_{-1,0}$ is a unitary matrix.

\subsection{Final transformation}
In the last step of the steepest descent method, we set $\Sigma=\mathbb{R}\cup \Sigma_1\cup\Sigma_3$ and we construct the following matrix:
\begin{equation}
R(z)
=
\begin{cases}
S(z)P^{(\infty)}(z)^{-1},& \qquad z\in \mathbb{C}\setminus\left(\overline{D_{\delta}(1)}\cup \overline{D_{\delta}(-1)}\cup\Sigma\right),\\
S(z)P(z)^{-1},& \qquad z \in D_{\delta}(1)\setminus\Sigma,\\
S(z)\widetilde{P}(z)^{-1},& \qquad z \in D_{\delta}(-1)\setminus\Sigma.
\end{cases}
\end{equation}

This matrix is analytic in $\mathbb{C}\setminus \Sigma_R$, see Figure \ref{fig:SigmaR}, and on this set of contours it has the following jumps:
\begin{equation}
J_R(z)
=
\begin{cases}
P^{(\infty)}(z)
\begin{pmatrix}
I_r & 0_r\\
\ee^{N\phi(z)}M(z)^{-1}  & I_r
\end{pmatrix}
P^{(\infty)}(z)^{-1},& \quad z\in \Sigma_R\setminus \left(\partial D_{\delta}(1)\cup \partial D_{\delta}(-1)\right),\\
P(z)P^{(\infty)}(z)^{-1},& \quad z\in \partial D_{\delta}(1),\\
\widetilde{P}(z)P^{(\infty)}(z)^{-1},& \quad z\in \partial D_{\delta}(-1).
\end{cases}
\end{equation}
\begin{figure}
    \centerline{\includegraphics[scale=1]{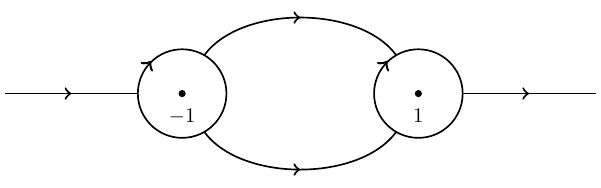}}
\caption{Final union of contours $\Sigma_R$.}
\label{fig:SigmaR}
\end{figure}

We write the jump of this final matrix as $J_R(z)=I_{2r}+\Delta(z)$, where
$\Delta(z)$ admits an asymptotic expansion in inverse powers of $N$:
\begin{equation}\label{eq:jumpDelta}
\Delta(z)
\sim
\sum_{k=1}^{\infty}\frac{\Delta_k(z)}{N^k}, \qquad n\to\infty.
\end{equation}
The coefficients $\Delta_k(z)$ are identically zero on $\Sigma_R\setminus \left(\partial D_{\delta}(1)\cup D_{\delta}(-1)\right)$, because the jump on this part of the final contour is exponentially close to identity. On the boundary of the discs, $\Delta_k(z)$ can be calculated from the matching between the global and local parametrices.

\begin{lemma}
    \normalfont
    On the boundary of the discs, the coefficients $\Delta_k$ in \eqref{eq:jumpDelta} are given by 
    \begin{equation}\label{eq:Deltak}
        \Delta_k(z)
        =
        \begin{cases}
\displaystyle           
\frac{1}{f(z)^{3k/2}}
P^{(\infty)}(z)
\begin{pmatrix}
Q(z) & 0_r\\
0_r & Q(\overline{z})^{-\ast}
\end{pmatrix}
\Psi_k
\begin{pmatrix}
Q(z)^{-1} & 0_r\\
0_r & Q(\overline{z})^{\ast}
\end{pmatrix}
P^{(\infty)}(z)^{-1},\\
\displaystyle
\frac{1}{\widetilde{f}(z)^{3k/2}}
P^{(\infty)}(z)
\begin{pmatrix}
Q(z) & 0_r\\
0_r & -Q(\overline{z})^{-\ast}
\end{pmatrix}
\Psi_k
\begin{pmatrix}
Q(z)^{-1} & 0_r\\
0_r & -Q(\overline{z})^{\ast}
\end{pmatrix}
P^{(\infty)}(z)^{-1},\\
        \end{cases}
    \end{equation}
for $z\in\partial D_{\delta}(1)$ and for $z\in\partial D_{\delta}(-1)$ respectively, and for $k\geq 1$.
\end{lemma}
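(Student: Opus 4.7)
\normalfont
The plan is to substitute the local parametrix formulas into the jump $J_R(z)$ on each circle and expand using the known asymptotics of $\Psi(\zeta)$ at infinity. I will carry out the disc around $z=1$ in detail and indicate the adjustment needed at $z=-1$.

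First I write
\[
J_R(z)=P(z)P^{(\infty)}(z)^{-1}
=
E_N(z)\,\Psi(N^{2/3}f(z))
\begin{pmatrix}
\ee^{\frac{N}{2}\phi(z)}\ee^{-Az} & 0_r\\
0_r & \ee^{-\frac{N}{2}\phi(z)}\ee^{A^\ast z}
\end{pmatrix}
P^{(\infty)}(z)^{-1},
\qquad z\in\partial D_\delta(1),
\]
and insert the full asymptotic expansion \eqref{eq:asympA_full} for $\Psi(N^{2/3}f(z))$. The key observation is that, by the choice of conformal map \eqref{eq:fz}, one has $\tfrac{2}{3}(N^{2/3}f(z))^{3/2}=\tfrac{N}{2}\phi(z)$, so the factor $\ee^{-\tfrac{2}{3}\zeta^{3/2}\sigma_3}$ coming from $\Psi$ cancels the $\ee^{\tfrac{N}{2}\phi(z)\sigma_3}$ appearing in the conjugating diagonal. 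After this cancellation, the formula for $E_N(z)$ in \eqref{eq:EN} contributes the inverse of the leading factor $\tfrac{1}{\sqrt{2}}(N^{2/3}f(z))^{-\sigma_3/4}\begin{pmatrix}I_r & \ii I_r\\ \ii I_r & I_r\end{pmatrix}$, because
\[
\frac{1}{2}
\begin{pmatrix}I_r & -\ii I_r\\ -\ii I_r & I_r\end{pmatrix}
\begin{pmatrix}I_r & \ii I_r\\ \ii I_r & I_r\end{pmatrix}
=I_{2r},
\]
and the rational powers of $N^{2/3}f(z)$ cancel against each other.

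What survives after these cancellations is
\[
J_R(z)
\sim
P^{(\infty)}(z)
\begin{pmatrix}\ee^{Az} & 0_r\\ 0_r & \ee^{-A^\ast z}\end{pmatrix}
\left(I_{2r}+\sum_{k=1}^\infty \frac{\Psi_k}{N^k f(z)^{3k/2}}\right)
\begin{pmatrix}\ee^{-Az} & 0_r\\ 0_r & \ee^{A^\ast z}\end{pmatrix}
P^{(\infty)}(z)^{-1}.
\]
Subtracting $I_{2r}$, identifying the resulting series with $\Delta(z)=\sum_k N^{-k}\Delta_k(z)$ as in \eqref{eq:jumpDelta}, and reading off the $N^{-k}$ coefficient yields the first line of \eqref{eq:Deltak}.

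For the disc $\partial D_\delta(-1)$, the same procedure is applied to $\widetilde P(z)\,P^{(\infty)}(z)^{-1}$, with $f,\phi,E_N$ replaced by $\widetilde f,\widetilde\phi,\widetilde E_N$. The only new ingredient is the orientation reversal carried by $\widetilde f$ at the origin (since $\widetilde f_0<0$), which in the construction of $\widetilde P(z)$ amounts to the conjugation $\Psi(N^{2/3}\widetilde f(z))\mapsto \operatorname{diag}(I_r,-I_r)\,\Psi(N^{2/3}\widetilde f(z))\,\operatorname{diag}(I_r,-I_r)$ together with the corresponding sign in the $(2,2)$-block of $\widetilde E_N(z)$ compared with $E_N(z)$, as visible in \eqref{eq:ENt}. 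Tracking these signs through the same cancellations above replaces $\operatorname{diag}(\ee^{Az},\ee^{-A^\ast z})$ by $\operatorname{diag}(\ee^{Az},-\ee^{-A^\ast z})$ on the left and correspondingly on the right, producing the second line of \eqref{eq:Deltak}.

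The main obstacle is purely bookkeeping: carefully propagating the $\sigma_3$-block structure through the substitutions so that (i) the $\phi$-exponentials cancel, (ii) the leading Airy prefactor from $\Psi$ exactly inverts the prefactor in $E_N(z)$, and (iii) the sign flips induced by $\widetilde f_0<0$ at $z=-1$ are applied consistently. Once this is set up, the coefficient $\Psi_k$ is simply transported to the original $z$-variable by conjugation by $P^{(\infty)}(z)\,\operatorname{diag}(\ee^{Az},\pm\ee^{-A^\ast z})$ and weighted by $f(z)^{-3k/2}$ (resp.~$\widetilde f(z)^{-3k/2}$), which is precisely \eqref{eq:Deltak}.
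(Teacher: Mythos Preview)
Your proof is correct and follows essentially the same approach as the paper's own proof: substitute the local parametrix into $J_R(z)$, use the asymptotic expansion \eqref{eq:asympA_full} of $\Psi$, and observe that the exponential factors and Airy prefactors cancel against $E_N(z)$ (respectively $\widetilde E_N(z)$), leaving precisely the conjugation of $\Psi_k$ by $P^{(\infty)}(z)\,\operatorname{diag}(\ee^{Az},\pm\ee^{-A^\ast z})$ and the weight $f(z)^{-3k/2}$ (respectively $\widetilde f(z)^{-3k/2}$). Your write-up is in fact somewhat more explicit than the paper's about how these cancellations occur.
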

\begin{proof}
    \normalfont
Let us look first at the jump on $\partial D_{\delta}(1)$:
\begin{equation}
\begin{aligned}
J_R(z)
&=
P(z)P^{(\infty)}(z)^{-1}\\
&=
E_N(z)
\Psi(N^{2/3} f(z))
\begin{pmatrix}
\ee^{\frac{N}{2}\phi(z)}Q(z)^{-1} & 0_r\\
0_r & \ee^{-\frac{N}{2}\phi(z)}Q(\overline{z})^{\ast}
\end{pmatrix}
P^{(\infty)}(z)^{-1}.
\end{aligned}
\end{equation}
Using formula \eqref{eq:EN} for $E_N(z)$,the asymptotic expansion \eqref{eq:asympA_full}  and the conformal map \eqref{eq:fz}, we can write
\begin{equation}
\begin{aligned}
    J_R(z)-I_{2r}\sim
\sum_{k=1}^{\infty}
P^{(\infty)}(z)
\begin{pmatrix}
Q(z) & 0_r\\
0_r & Q(\overline{z})^{-\ast}
\end{pmatrix}
    \frac{\Psi_k}{N^k f(z)^{3k/2}}
\begin{pmatrix}
Q(z)^{-1} & 0_r\\
0_r & Q(\overline{z})^{\ast}
\end{pmatrix}
P^{(\infty)}(z)^{-1},
\end{aligned}
\end{equation}
for $z\in\partial D_{\delta}(1)$, 
where $\Psi_k$ are given in \eqref{eq:Psik} and \eqref{eq:uk}, and this leads to $\Delta_k(z)$ as in the first line of \eqref{eq:Deltak}.

For $z\in\partial D_{\delta}(-1)$, the jump is
\begin{equation}
\begin{aligned}
J_R(z)
&=
\widetilde{P}(z)P^{(\infty)}(z)^{-1}\\
&=
\widetilde{E}_N(z)
\sigma_3
\Psi(N^{2/3} \widetilde{f}(z))
\sigma_3
\begin{pmatrix}
\ee^{\frac{N}{2}\widetilde{\phi}(z)}
Q(z)^{-1} & 0\\
0 & \ee^{-\frac{N}{2}\widetilde{\phi}(z)}Q(\overline{z})^{\ast}
\end{pmatrix}
P^{(\infty)}(z)^{-1}.
\end{aligned}
\end{equation}
Using \eqref{eq:ENt} and \eqref{eq:asympA_full} again, we arrive at the second line of \eqref{eq:Deltak}.
\end{proof}

Further properties of $\Delta_k(z)$ can be obtained with a local analysis. The following result is analogous to \cite[Lemma 8.2]{KMcLVAV2004}:
\begin{lemma}\label{lem:Deltak}
\normalfont
    For $k\geq 1$, the coefficients $\Delta_k(z)$ (respectively $\widetilde{\Delta}_k(z)$) can be extended to a meromorphic function in $D_{\delta}(1)$ (respectively $D_{\delta}(-1)$), with a pole of order at most $\left[\frac{3k+1}{2}\right]$ at $z=1$ (respectively $z=-1$). 
\end{lemma}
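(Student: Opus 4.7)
The plan is to rewrite the formula for $\Delta_k(z)$ from the previous lemma in terms of the analytic prefactor $E_N(z)$ in \eqref{eq:EN}, which will absorb the branch structure of $P^{(\infty)}(z)$ and $f(z)^{\sigma_3/4}$ on $(1-\delta,1)$. Inverting \eqref{eq:EN} yields
\begin{equation*}
P^{(\infty)}(z)\begin{pmatrix}\ee^{Az} & 0_r\\ 0_r & \ee^{-A^\ast z}\end{pmatrix}
=
E_N(z)\bigl(N^{2/3}f(z)\bigr)^{-\sigma_3/4}C_0^{-1},
\qquad
C_0=\tfrac{1}{\sqrt{2}}\begin{pmatrix}I_r & -\ii I_r\\ -\ii I_r & I_r\end{pmatrix},
\end{equation*}
so that substitution into the first line of \eqref{eq:Deltak} gives the candidate meromorphic extension
\begin{equation*}
\Delta_k(z)
=
\frac{1}{f(z)^{3k/2}}\,E_N(z)\bigl(N^{2/3}f(z)\bigr)^{-\sigma_3/4}\bigl(C_0^{-1}\Psi_k C_0\bigr)\bigl(N^{2/3}f(z)\bigr)^{\sigma_3/4}E_N(z)^{-1}.
\end{equation*}

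The heart of the proof will be a block computation of the constant matrix $C_0^{-1}\Psi_k C_0$ using \eqref{eq:Psik}. A direct calculation shows this matrix is block diagonal for $k$ even and block anti-diagonal for $k$ odd, namely
\begin{equation*}
C_0^{-1}\Psi_k C_0
=
(3/2)^k
\begin{cases}
\diag(u_kI_r,\,v_kI_r), & k\text{ even},\\[2pt]
\begin{pmatrix}0_r & \ii u_k I_r\\ -\ii v_k I_r & 0_r\end{pmatrix}, & k\text{ odd}.
\end{cases}
\end{equation*}
Conjugation by the diagonal factor $(N^{2/3}f(z))^{\sigma_3/4}$ leaves the even case untouched, while in the odd case it multiplies the $(1,2)$ block by $(N^{2/3}f(z))^{-1/2}$ and the $(2,1)$ block by $(N^{2/3}f(z))^{1/2}$. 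After combining with the overall $f(z)^{-3k/2}$, the nonzero entries of the middle matrix carry exponents of $f(z)$ equal to $-3k/2$ in the even case and $-(3k\pm 1)/2$ in the odd case; in both parities these are integers, so the middle matrix is genuinely meromorphic in $D_\delta(1)$ with a pole at $z=1$ of order at most $[(3k+1)/2]$.

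To transfer this bound to $\Delta_k(z)$, I would observe that $E_N(z)$ is not merely analytic but actually invertible in $D_\delta(1)$: $\det P^{(\infty)}\equiv 1$ because the jump matrix in \eqref{eq:jumpPinfty} has determinant one and $P^{(\infty)}(\infty)=I_{2r}$; $\det \ee^{\pm Az}=1$ by nilpotency of $A$; and $\det C_0=1$ by direct computation. Hence $\det E_N(z)\equiv 1$, and conjugation by $E_N(z)^{\pm 1}$ preserves both single-valuedness and the pole order. The argument at $z=-1$ for $\widetilde{\Delta}_k(z)$ proceeds identically, with $f$, $E_N$ replaced by $\widetilde{f}$, $\widetilde{E}_N$ from \eqref{eq:ftz}, \eqref{eq:ENt}, and the extra $\sigma_3$-conjugation visible in the second line of \eqref{eq:Deltak} only changing signs in the off-diagonal blocks.

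The main point, and the only real source of care in the argument, is the parity-dependent block structure of $C_0^{-1}\Psi_k C_0$: the whole statement hinges on the observation that the parity of $k$ dictates which off-diagonal block carries the $(N^{2/3}f(z))^{\pm 1/2}$ factor, and this conspires exactly with the fractional exponent in $f(z)^{-3k/2}$ so that all half-integer powers of $f(z)$ cancel, leaving only a meromorphic singularity of the claimed order.
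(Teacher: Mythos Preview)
Your proof is correct and takes a genuinely different route from the paper's. The paper proceeds in two separate steps: first it checks that $\Delta_k$ has no jump across $(1-\delta,1)$ by computing $\Delta_{k-}^{-1}\Delta_{k+}$ directly (using the identity $\Psi_k^{-1}\begin{pmatrix}0_r&I_r\\-I_r&0_r\end{pmatrix}\Psi_k=(-1)^k\begin{pmatrix}0_r&I_r\\-I_r&0_r\end{pmatrix}$ together with the jump of $P^{(\infty)}$), and then bounds the pole order by the crude estimate $\widetilde{P}^{(\infty)}(z)=\mathcal{O}((z-1)^{-1/4})$, $f(z)^{-3k/2}=\mathcal{O}((z-1)^{-3k/2})$, giving $\Delta_k(z)=\mathcal{O}((z-1)^{-(3k+1)/2})$. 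Your approach instead absorbs all the branch structure into the already-proved-analytic prefactor $E_N$, reducing the problem to the explicit middle factor $(N^{2/3}f)^{-\sigma_3/4}(C_0^{-1}\Psi_k C_0)(N^{2/3}f)^{\sigma_3/4}/f^{3k/2}$; the parity computation on $C_0^{-1}\Psi_k C_0$ then makes both single-valuedness and the exact pole order visible in one stroke, with no separate jump check needed. This is cleaner and more structural, and it reuses the previous lemma on $E_N$ in an essential way rather than reproving part of it. The paper's approach, on the other hand, is closer to the standard template (as in Kuijlaars--McLaughlin--Van Assche--Vanlessen) and does not rely on the block-parity identity for $\Psi_k$.

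One small point worth tightening: your Liouville argument for $\det P^{(\infty)}\equiv 1$ needs the observation that $\det P^{(\infty)}$ has at worst removable singularities at $z=\pm 1$; this follows from the explicit form \eqref{eq:Ptinfty} (giving $\mathcal{O}((z\mp 1)^{-1/4})$ entrywise, hence $\det P^{(\infty)}$ grows slower than any pole), but it should be said. Alternatively, since you only need $E_N^{-1}$ analytic in $D_\delta(1)$, you could simply note that the same argument that proved analyticity of $E_N$ in the preceding lemma applies verbatim to $E_N^{-1}$.
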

\begin{proof}
\normalfont
    From \eqref{eq:Deltak}, we have
\begin{equation}
    \begin{aligned}
        \Delta_{k-}^{-1}(z)\Delta_{k+}(z)
        &=
        (-1)^kP_-^{(\infty)}(z)
        \begin{pmatrix}
        Q(z) & 0_r\\
        0_r & Q(\overline{z})^{-\ast}
        \end{pmatrix}\\
        &\times 
        \Psi_k^{-1}
        \begin{pmatrix}
        0_r & I_r\\
        -I_r & 0_r
        \end{pmatrix}    
        \Psi_k
        \begin{pmatrix}
        Q(z)^{-1} & 0_r\\
        0_r & Q(\overline{z})^{\ast}
        \end{pmatrix}    
        \left(P^{(\infty)}(z)\right)_+^{-1}.
    \end{aligned}
\end{equation}
The overall sign comes from the fact that $f(z)=C(z-1)+\mathcal{O}((z-1)^2)$, with $C>0$, see \eqref{eq:localft}, and then $\left(f^{3k/2}(z)\right)_-\left(f^{-3k/2}(z)\right)_+=\ee^{-\frac{3k\pi\ii}{2}}\ee^{-\frac{3k\pi\ii}{2}}=\ee^{-3k\pi\ii}=(-1)^k$. Next, from direct calculation using \eqref{eq:Psik}, we obtain
\[
\Psi_k^{-1}
        \begin{pmatrix}
        0_r & I_r\\
        -I_r & 0_r
        \end{pmatrix}    
\Psi_k
=
(-1)^k \begin{pmatrix}
    0_r & I_r\\
    -I_r & 0_r
\end{pmatrix},
\]
and then $\Delta_{k-}^{-1}(z)\Delta_{k+}(z)=I_{2r}$ by direct calculation using the jump for $P^{(\infty)}(z)$, see \eqref{eq:jumpPinfty}, and the fact that $M(z)=Q(z)Q(\overline{z})^{\ast}$. This shows that $\Delta_k(z)$ is analytic in a punctured neighborhood $D_{\delta}(1)\setminus\{1\}$.

Next, we want to show that the function $\Delta_k(z)$ can in fact be extended to a meromorphic function in a neighborhood of $z=1$. Using \eqref{eq:Pinfty}, we have
\begin{equation}
    \begin{aligned}
\Delta_k(z)&=
\frac{1}{f(z)^{3k/2}}
\begin{pmatrix}
D(\infty) & 0_r\\
0_r & D(\infty)^{-\ast}
\end{pmatrix}
\widetilde{P}^{(\infty)}(z)\\
&\times 
\begin{pmatrix}
D(z)^{-1}Q(z) & 0_r\\
0_r & D(\overline{z})^\ast Q(\overline{z})^{-\ast}
\end{pmatrix}
\Psi_k
\begin{pmatrix}
Q(z)^{-1} D(z) & 0_r\\
0_r & Q(\overline{z})^{\ast} D(\overline{z})^{-\ast}
\end{pmatrix}\\
&\times 
\widetilde{P}^{(\infty)}(z)^{-1}
\begin{pmatrix}
D(\infty)^{-1} & 0_r\\
0_r & D(\infty)^{\ast}
\end{pmatrix}.
\end{aligned}
\end{equation}

We know that $\widetilde{P}^{(\infty)}(z)=\mathcal{O}((z-1)^{-1/4})$, by $r\times r$ blocks,  directly from its definition, $f(z)=\mathcal{O}(z-1)$ as $z\to 1$, and the factor $D(z)^{-1}Q(z)$ is bounded as $z\to 1$. As a consequence, we obtain
\begin{equation}
    \Delta_k(z)
    =
    \mathcal{O}\left((z-1)^{-\frac{3k}{2}-\frac{1}{2}}\right), \qquad z\to 1.
\end{equation}
Therefore, $\Delta_k(z)$ has a pole of order at most $\left[\frac{3k+1}{2}\right]$ at $z=1$. If $k=2s-1$ is odd, we obtain a pole of order at most $3s-1$, and if $k=2s$ is even, we obtain a pole of order at most $3s$.

A similar calculation can be applied to $\widetilde{\Delta}_-^{-1}(z)\widetilde{\Delta}_+(z)$ around $z=-1$. 
\end{proof}

As a consequence of the asymptotic behavior of the jump matrix $J_R(z)$ as $N\to\infty$, standard theory gives asymptotic behavior of the matrix $R(z)$ itself. Namely, we have
\begin{equation}\label{eq:asympR}
    R(z)
    \sim I_{2r}+\sum_{k=1}^{\infty}\frac{R_k(z)}{N^k}, \qquad n\to\infty,
\end{equation}
where each coefficient $R_k(z)$ is an analytic function of $z$ in $\mathbb{C}\setminus \left(\partial D_{\delta}(1)\cup\partial D_{\delta}(-1)\right)$, and also $R_k(z)=\mathcal{O}(z^{-1})$ as $z\to\infty$.

An essential feature of the expansion \eqref{eq:asympR} is that it is uniform for $z$ near infinity. The proof of this property (see for instance \cite[Lemma 8.3]{KMcLVAV2004}) furnishes a sequence of additive Riemann--Hilbert problems for the coefficients $R_k(z)$ that can be solved sequentially. Using \eqref{eq:jumpDelta} and expanding the equation $R_+(z)=R_-(z)\left(I_{2r}+\Delta(z)\right)$ in powers of $N$ and identifying terms with equal powers, we obtain
\begin{equation}\label{eq:RHforRk}
    R_{k+}(z)
    =
    R_{k-}(z)
    +
    \sum_{j=1}^{k}
    R_{(k-j)-}(z)\Delta_j(z), \qquad z\in \partial D_{\delta}(1)\cup\partial D_{\delta}(-1),
\end{equation}
for $k\geq 1$, with $R_{0-}(z)=I_{2r}$.

From this, $R_{k}(z)$ can be calculated using the local properties of $\Delta_j(z)$ and the residue theorem.

\subsection{Calculation of $R_1(z)$}

For $k=1$, we have from \eqref{eq:RHforRk} the following Riemann--Hilbert problem:
\begin{equation}\label{eq:RHforR1}
    R_{1+}(z)=R_{1-}(z)+\Delta_1(z), 
    \qquad z\in \partial D_{\delta}(1)\cup\partial D_{\delta}(-1).
\end{equation}
This problem can be solved using Sokhotski--Plemelj formula:
\begin{equation}
    R_1(z)
    =
    \frac{1}{2\pi\ii}
    \oint_{\partial D_{\delta}(1)\cup\partial D_{\delta}(-1)}
    \frac{\Delta_1(s)}{s-z}\dd s.
\end{equation}

Since $\Psi_{2s-1,11}=-\Psi_{2s-1,22}$ and $\Psi_{2s-1,12}=\Psi_{2s-1,21}$ and the fact that $\det \widetilde{P}^{(\infty)}(z)=1$ and has blocks that are multiples of the identity (and therefore commute with other matrices), we can write the first correction matrix in the following form:
\begin{equation}\label{eq:Delta1general}
\begin{aligned}
\Delta_{1}(z)
&=
\frac{1}{f(z)^{\frac{3}{2}}}
\begin{pmatrix}
D(\infty) & 0_r\\
0_r & D(\infty)^{-\ast}
\end{pmatrix}
\widetilde{P}^{(\infty)}(z)\\
&\times
\begin{pmatrix}
    \Psi_{1,11} & \Psi_{1,12} L(z)\\
    \Psi_{1,12}L(z)^{-1}  & -\Psi_{1,11} 
\end{pmatrix}
\widetilde{P}^{(\infty)}(z)^{-1}
\begin{pmatrix}
D(\infty)^{-1} & 0_r\\
0_r & D(\infty)^\ast
\end{pmatrix},
\end{aligned}
\end{equation}
where 
\begin{equation}\label{eq:Lz}
L(z)=D(z)^{-1} Q(z)Q(\overline{z})^{\ast}
D(\overline{z})^{-\ast}.
\end{equation}

As a consequence of Lemma \ref{lem:Deltak}, the matrix $\Delta_1(z)$ has a pole of order $2$ at $z=1$. We write
\begin{equation}\label{eq:Delta1_z1}
    \Delta_1(z)
    =
    \frac{\Delta_{1,-2}}{(z-1)^2}
    +
    \frac{\Delta_{1,-1}}{z-1}
    +\mathcal{O}(1), \qquad z\to 1.
\end{equation}

Using the structure of the matrix $\widetilde{P}^{(\infty)}(z)$, and in particular the fact that its entries are multiples of the identity and therefore commute, we can write $\Delta_1(z)$ as follows:
\begin{equation}
    \begin{aligned}
        \Delta_{1}(z)
        &=
        \frac{\Psi_{1,11}}{(z^2-1)^{1/2}f(z)^{\frac{3}{2}}}
        \begin{pmatrix}
            D(\infty) & 0_r\\
            0_r & D(\infty)^{-\ast}
        \end{pmatrix}
        \begin{pmatrix}
            z I_r & -\ii I_r\\
            -\ii I_r & -z I_r    
        \end{pmatrix}
        \begin{pmatrix}
            D(\infty)^{-1} & 0_r\\
            0_r & D(\infty)^{\ast}
        \end{pmatrix}\\
        &+
        \frac{\Psi_{1,12}}{2(z^2-1)^{1/2}f(z)^{\frac{3}{2}}}\\
        &\times
        \begin{pmatrix}
            D(\infty) & 0_r\\
            0_r & D(\infty)^{-\ast}
        \end{pmatrix}
        \begin{pmatrix}
            \ii(L(z)+L(z)^{-1}) &
            \varphi(z) L(z)+\varphi(z)^{-1}L(z)^{-1}\\
            \varphi(z) L(z)^{-1}+\varphi(z)^{-1}L(z)
           &
            -\ii(L(z)+L(z)^{-1})
        \end{pmatrix}\\
        &\times
        \begin{pmatrix}
            D(\infty)^{-1} & 0_r\\
            0_r & D(\infty)^{\ast}
        \end{pmatrix}.
    \end{aligned}
\end{equation}

Here $\varphi(z)=z+(z^2-1)^{1/2}$ is the usual conformal map. As $z\to 1$, we have
\begin{equation}
\begin{aligned}
&
    \begin{pmatrix}
            \ii(L(z)+L(z)^{-1}) &
            \varphi(z) L(z)+\varphi(z)^{-1}L(z)^{-1}\\
            \varphi(z) L(z)^{-1}+\varphi(z)^{-1}L(z)
           &
            -\ii(L(z)+L(z)^{-1})
        \end{pmatrix}\\
&=
        2\begin{pmatrix}
            \ii I_r & I_r\\
            I_r & -\ii I_r
        \end{pmatrix}
+
       2\begin{pmatrix}
            0_r & I_r\\
            I_r & 0_r
        \end{pmatrix}
        (z-1)
        +
        \begin{pmatrix}
           \ii  L_1^2 & L_1(2\sqrt{2}I_r+L_1)\\
            L_1(-2\sqrt{2}I_r+L_1) & -\ii L_1^2
        \end{pmatrix}
        (z-1)\\
        &+\mathcal{O}((z-1)^2).
\end{aligned}
\end{equation}
The first two terms in this expansion will give a block scalar contribution, but the last one is genuinely matrix valued, since $L_1$ is not, in general, a multiple of the identity matrix.

We use the following expansions as $z\to 1$: 
\begin{equation}
(z^2-1)^{1/2}f(z)^{3/2}
=
h(1)(z-1)^{2}+\frac{2h(1)+3h'(1)}{5}(z-1)^{3}+\mathcal{O}((z-1)^{4}), 
\end{equation}
and 
\begin{equation}\label{eq:Lat1}
L(z)
=
I_r+L_1(z-1)^{1/2}+L_2(z-1)+\mathcal{O}((z-1)^{3/2}).
\end{equation}
This last expansion follows from $D(z)^{-1}Q(z)=D_{1,0}+D_{1,1}(z-1)^{1/2}+\mathcal{O}(z-1)$, with $D_{1,0}$ a unitary matrix, see Remark \ref{rem:Dunit}, and in fact 
\begin{equation} L_1=D_{1,0}D_{1,1}^\ast+D_{1,1}D_{1,0}^\ast.
\end{equation}
Thus, we obtain
\begin{equation}\label{eq:Delta1}
    \begin{aligned}
    \Delta_{1,-2}
    &=
    \frac{5}{48h(1)}
    \begin{pmatrix}
     -I_r & \ii D(\infty)D(\infty)^\ast\\
     \ii D(\infty)^{-\ast}D(\infty)^{-1} & I_r
    \end{pmatrix},\\
    \Delta_{1,-1}
    &=
    \frac{1}{48 h(1)^2}
\begin{pmatrix}
     3(h(1)+h'(1))I_r
     &
     \ii\left(4h(1)-3h'(1)\right)D(\infty)D(\infty)^\ast\\
      \ii\left(4h(1)-3h'(1)\right)D(\infty)^{-\ast}D(\infty)^{-1}
      &
      -3(h(1)+h'(1))I_r
      \end{pmatrix}\\
&       
      +\frac{1}{16h(1)}
\begin{pmatrix}
D(\infty) & 0_r\\
0_r & D(\infty)^{-\ast}
\end{pmatrix}
      \begin{pmatrix}
       -L_1^2 
       &
       \ii L_1(2\sqrt{2}I_r+L_1)\\
       \ii L_1(-2\sqrt{2}I_r+L_1) 
       &
       L_1^2 
       \end{pmatrix}
    \begin{pmatrix}
    D(\infty)^{-1} & 0_r\\
    0_r & D(\infty)^\ast
    \end{pmatrix}
 \end{aligned}
\end{equation}

We notice an important difference with respect to the scalar case, see for example \cite[(4.11)]{KT2009}. This corresponds to $L_1=0_r$, and in that situation the matrices $\Delta_{1,-2}(z)$ and $\Delta_{1,-1}(z)$ can be written as a combination (by blocks) of the Pauli matrices $\sigma_1$ and $\sigma_3$. If $L_1\neq 0_r$, in a genuine matrix case, this is no longer the case, and the structure of the coefficients is broken.

Regarding the behavior in $D_{\delta}(-1)$, we have 
\begin{equation}
(z^2-1)^{1/2}\widetilde{f}(z)^{3/2}
=
-h(-1)(z+1)^{2}+\frac{2h(-1)-3h'(-1)}{5}(z+1)^{3}+\mathcal{O}((z+1)^{4}), 
\end{equation}
as $z\to -1$, and 
\begin{equation}\label{eq:Latm1}
L(z)
=
I_r+L_{-1}(z+1)^{1/2}+L_{-2}(z+1)+\mathcal{O}((z+1)^{3/2}),
\end{equation}
with 
\begin{equation} L_{-1}=D_{-1,0}D_{-1,1}^\ast+D_{-1,1}D_{1,0}^\ast.
\end{equation}

As a consequence, we obtain
\begin{equation}\label{eq:Delta1_zm1}
    \Delta_1(z)
    =
    \frac{\widetilde{\Delta}_{1,-2}}{(z+1)^2}
    +
    \frac{\widetilde{\Delta}_{1,-1}}{z+1}
    +\mathcal{O}(1), \qquad z\to -1,
\end{equation}
with matrices 
\begin{equation}\label{eq:Deltat1}
    \begin{aligned}
    \widetilde{\Delta}_{1,-2}
    &=
    \frac{5}{48h(-1)}
                \begin{pmatrix}
D(\infty) & 0_r\\
0_r & D(\infty)^{-\ast}
\end{pmatrix}
    \begin{pmatrix}
     -I_r & -\ii I_r\\
     -\ii I_r & I_r
    \end{pmatrix}            
    \begin{pmatrix}
D(\infty)^{-1} & 0_r\\
0_r & D(\infty)^{\ast}
\end{pmatrix},\\
    \widetilde{\Delta}_{1,-1}
    &=
            \begin{pmatrix}
D(\infty) & 0_r\\
0_r & D(\infty)^{-\ast}
\end{pmatrix}\\
&\times
\left[
    \frac{1}{48 h(-1)^2}
\begin{pmatrix}
     -3(h(-1)-h'(-1))I_r
     &
\ii\left(4h(-1)+3h'(-1)\right)I_r\\
\ii\left(4h(-1)+3h'(-1)\right)I_r
      &
      3(h(-1)-h'(-1))I_r
      \end{pmatrix}\right.\\
&\left.+
      \frac{1}{16h(-1)}
      \begin{pmatrix}
       L_{-1}^2 
       &
       -\ii L_{-1}(2\sqrt{2}I_r-L_{-1})\\
      \ii L_{-1}(2\sqrt{2}I_r+L_{-1}) 
       &
       -L_1^2 
       \end{pmatrix}\right]\\
       &\times
        \begin{pmatrix}
        D(\infty)^{-1} & 0_r\\
        0_r & D(\infty)^\ast
        \end{pmatrix}
    \end{aligned}
\end{equation}

The first correction matrix $R_1(z)$ can be constructed as follows:
\begin{equation}\label{eq:R1residues}
    R_1(z)
    =
    \begin{cases}
        \displaystyle
        \frac{\Delta_{1,-2}}{(z-1)^2}+\frac{\Delta_{1,-1}}{z-1}
        +
        \frac{\widetilde{\Delta}_{1,-2}}{(z+1)^2}+\frac{\widetilde{\Delta}_{1,-1}}{z+1}, & 
        \,\, z\in \mathbb{C}\setminus\left(\overline{D_{\delta}(1)\cup D_{\delta}(-1)}\right),\\
        \displaystyle
        \frac{\Delta_{1,-2}}{(z-1)^2}+\frac{\Delta_{1,-1}}{z-1}
        +
        \frac{\widetilde{\Delta}_{1,-2}}{(z+1)^2}+\frac{\widetilde{\Delta}_{1,-1}}{z+1}-\Delta_1(z), & \,\,z\in D_{\delta}(1)\cup D_{\delta}(-1).
    \end{cases}
\end{equation}
Higher order terms can be constructed by iterating this procedure, for example
\begin{equation}
R_{2+}(z)
=
R_{2-}(z)
+
R_{1-}(z)\Delta_1(z)+\Delta_2(z), \qquad
z\in\partial D_{\delta}(1)\cup\partial D_{\delta}(-1),
\end{equation}
but in general this is a laborious task. We note that in this situation the order of the poles at $z=\pm 1$ increases faster than in other cases such as a Bessel parametrix, see \cite{KMcLVAV2004} for instance, and this complicates the calculation.

\section{Proof of Theorem \ref{th:MVOPs}}
\subsection{Outer asymptotics}
Outside of the lens, we have $T(z)=S(z)=R(z)P^{(\infty)}(z)$, and then
\begin{equation}
\begin{aligned}
U(z)
&=
\begin{pmatrix}
\ee^{\frac{N\ell}{2}}I_r & 0_r\\
0_r & \ee^{-\frac{N\ell}{2}}I_r
\end{pmatrix}
R(z)P^{(\infty)}(z)
\begin{pmatrix} 
\ee^{N\left(g(z)-\frac{\ell}{2}\right)}I_r & 0_r\\
0_r & \ee^{-N\left(g(z)-\frac{\ell}{2}\right)}I_r
\end{pmatrix}.
\end{aligned}
\end{equation}
In particular,
\begin{equation}
\begin{aligned}
U_{11}(z)
&=
\begin{pmatrix}
\ee^{\frac{N\ell}{2}}I_r & 0_r\\
\end{pmatrix}
R(z)P^{(\infty)}(z)
\begin{pmatrix} 
\ee^{N\left(g(z)-\frac{\ell}{2}\right)}I_r \\ 0_r
\end{pmatrix}\\
&=
\ee^{Ng(z)}\left(R_{11}(z)P_{11}^{(\infty)}(z)+R_{12}(z)P_{21}^{(\infty)}(z)\right).
\end{aligned}
\end{equation}
We have
\begin{equation}
\begin{aligned}
P_{11}^{(\infty)}(z)
&=
\frac{1}{2}D(\infty)(\beta(z)+\beta(z)^{-1})D(z)^{-1},\\
P_{21}^{(\infty)}(z)
&=
\frac{\ii}{2}D(\infty)^{-\ast}(\beta(z)-\beta(z)^{-1})D(z)^{-1}.
\end{aligned}
\end{equation}
Since $R_{11}(z)=I_r+\mathcal{O}(n^{-1})$ and $R_{12}(z)=\mathcal{O}(N^{-1})$ as $n\to\infty$, and $P^{(\infty)}(z)$ is independent of $N$, we have
\begin{equation}
U_{11}(z)
=
\ee^{Ng(z)}\left(P_{11}^{(\infty)}(z)+\mathcal{O}(N^{-1})\right), \qquad N\to\infty.
\end{equation}
This gives strong asymptotics for the MVOPs away from the interval $[-1,1]$, since $U_{11}(z)=c^{-N}P_{N,N}(cz+d)$.

\subsection{Inner asymptotics}
If we take $z$ in the upper part of the lens, we have $T(z)=S(z)$, but we need to add a block triangular factor when we undo the transformations inside the lens:
\begin{equation}
\begin{aligned}
U(z)
&=
\begin{pmatrix}
\ee^{\frac{N\ell}{2}}I_r & 0_r\\
0_r & \ee^{-\frac{N\ell}{2}}I_r
\end{pmatrix}
S(z)
\begin{pmatrix} 
I_r & 0_r\\
\ee^{N\phi(z)}M(z)^{-1} & I_r
\end{pmatrix}
\begin{pmatrix} 
\ee^{N\left(g(z)-\frac{\ell}{2}\right)}I_r & 0_r\\
0_r & \ee^{-N\left(g(z)-\frac{\ell}{2}\right)}I_r
\end{pmatrix}\\
&=
\begin{pmatrix}
\ee^{\frac{N\ell}{2}}I_r & 0_r\\
0_r & \ee^{-\frac{N\ell}{2}} I_r
\end{pmatrix}
R(z)P^{(\infty)}(z)
\begin{pmatrix} 
\ee^{N\left(g(z)-\frac{\ell}{2}\right)}I_r & 0_r\\
M(z)^{-1}\ee^{N\left(\phi(z)+g(z)-\frac{\ell}{2}\right)} & \ee^{-N\left(g(z)-\frac{\ell}{2}\right)}I_r
\end{pmatrix}.
\end{aligned}
\end{equation}
In particular,
\begin{equation}
\begin{aligned}
U_{11}(z)
&=
\begin{pmatrix}
\ee^{\frac{N\ell}{2}}I_r & 0_r\\
\end{pmatrix}
R(z)P^{(\infty)}(z)
\begin{pmatrix} 
\ee^{N\left(g(z)-\frac{\ell}{2}\right)}I_r \\ \ee^{N\left(\phi(z)+g(z)-\frac{\ell}{2}\right)}M(z)^{-1}
\end{pmatrix}\\
&=
\ee^{N\left(g(z)+\frac{\phi(z)}{2}\right)}\\
&\times 
\left(\ee^{-\frac{N\phi(z)}{2}}\left(P_{11}^{(\infty)}(z)+\mathcal{O}(N^{-1})\right)
+\ee^{\frac{N\phi(z)}{2}}\left(P_{12}^{(\infty)}(z)M(z)^{-1}+\mathcal{O}(N^{-1})\right)
\right).
\end{aligned}
\end{equation}
 If we take the limit $z\to x$ from this side, we get
\begin{equation}
\begin{aligned}
U_{11+}(x)
&=
\ee^{N\left(g_{+}(x)+\frac{\phi_{+}(x)}{2}\right)}\\
&\times \left(\ee^{-\frac{N}{2}\phi_{+}(x)}\left(P_{11+}^{(\infty)}(x)+\mathcal{O}(N^{-1})\right)
+\ee^{\frac{N}{2}\phi_{+}(x)}\left(P_{12+}^{(\infty)}(x)M(x)^{-1}
+\mathcal{O}(N^{-1})\right)\right).
\end{aligned}
\end{equation}
Now, we note that $M(x)=D_{+}(x)D_+(x)^\ast=D_{-}(x)D_-(x)^\ast$, so 
\[
\begin{aligned}
P_{11+}^{(\infty)}(x)
&=
D(\infty)\widetilde{P}^{(\infty)}_{11+}(x)D_+(x)^{-1}, \\
P_{12+}^{(\infty)}(x)M(x)^{-1}
&=
D(\infty)\widetilde{P}^{(\infty)}_{12+}(x)D_-(x)^{\ast}D_-(x)^{-\ast}D_-(x)^{-1}
=
D(\infty)\widetilde{P}^{(\infty)}_{12+}(x)D_-(x)^{-1}.
\end{aligned}
\]
Also, since $M(x)$ is real valued on $[-1,1]$, we have $M(\overline{z})=\overline{M(z)}$ for $z\in\mathbb{C}\setminus[-1,1]$, and as a consequence $D_+(x)=\overline{D_-(x)}$. Therefore,
\begin{equation}\label{eq:U110}
\begin{aligned}
U_{11+}(x)
&=
\ee^{\frac{N}{2}(v(x)+\ell)} D(\infty)
\\
&\times \left(\ee^{-\frac{N}{2}\phi_{+}(x)}\left(\widetilde{P}^{(\infty)}_{11+}(x)D_+(x)^{-1}+\mathcal{O}(N^{-1})\right)
+\ee^{\frac{N}{2}\phi_{+}(x)}\left(\widetilde{P}^{(\infty)}_{12+}(x)\overline{D_+(x)^{-1}}
+\mathcal{O}(N^{-1})\right)\right)\\
&=
\ee^{\frac{N}{2}(v(x)+\ell)} D(\infty)
\\
&\times \left(\ee^{-\frac{N}{2}\phi_{+}(x)} \widetilde{P}^{(\infty)}_{11+}(x)D_+(x)^{-1}
+\ee^{\frac{N}{2}\phi_{+}(x)} \widetilde{P}^{(\infty)}_{12+}(x)\overline{D_+(x)^{-1}}
+\mathcal{O}(N^{-1})\right),
\end{aligned}
\end{equation}
since $|\ee^{\pm\frac{N}{2}\phi_{+}(x)}|=1$ for $x\in [-1,1]$, and $\widetilde{P}^{(\infty)}_+(x)$ and $D_+(x)$ are independent of $N$, and we have used that $\phi(z)=-2g(z)+v(z)+\ell$.

We define the function $\varphi(z)=z+(z^2-1)^{1/2}$ with a branch cut on $[-1,1]$, and on this interval we write
\begin{equation}
\varphi_{\pm}(x)
=
\ee^{\pm\ii \arccos x}
=
\ee^{\pm\ii\left(\frac{\pi}{2}-\arcsin x\right)},\qquad x\in[-1,1].
\end{equation}
As a consequence, we have
\begin{equation}
\begin{aligned}
\widetilde{P}^{(\infty)}_{11+}(x)
&=
\frac{\ee^{-\pi\ii/4}\varphi(x)_+^{1/2}}{\sqrt{2}(1-x^2)^{1/4}}
=
\frac{\ee^{-\frac{\ii}{2}\arcsin(x)}}{\sqrt{2}(1-x^2)^{1/4}}
\\
\widetilde{P}^{(\infty)}_{12+}(x)
&=
\frac{\ee^{\pi\ii/4}\varphi(x)_+^{-1/2}}{\sqrt{2}(1-x^2)^{1/4}}
=
\frac{\ee^{\frac{\ii}{2}\arcsin(x)}}{\sqrt{2}(1-x^2)^{1/4}},
\end{aligned}
\end{equation}
and therefore $\widetilde{P}^{(\infty)}_{11+}(x)=\overline{\widetilde{P}^{(\infty)}_{12+}(x)}$. So
\begin{multline}
\ee^{-\frac{N}{2}\phi_{+}(x)} \widetilde{P}^{(\infty)}_{11+}(x)D_+(x)^{-1}
+\ee^{\frac{N}{2}\phi_{+}(x)} \widetilde{P}^{(\infty)}_{12+}(x)\overline{D_+(x)^{-1}}\\
=
\frac{\sqrt{2}}{(1-x^2)^{1/4}}
\textrm{Re}\left(\ee^{\frac{N}{2}\phi_{+}(x)+\frac{\ii}{2}\arcsin(x)} D_+(x)^{-1}\right).
\end{multline}

This leads to 
\begin{equation}\label{eq:leadingU11}
U_{11+}(x)
=
\frac{\sqrt{2}\ee^{\frac{N}{2}(v(x)+\ell)}}{(1-x^2)^{1/4}} D(\infty)
\left[\textrm{Re}
\left(
\ee^{\ii\psi(x)}D_+(x)^{-1}\right)+\mathcal{O}(N^{-1})\right],
\end{equation}
where the phase function is given by \eqref{eq:psi}:
\begin{equation}
\psi(x)=-\frac{\ii N}{2}\phi_{+}(x)+\frac{1}{2}\arcsin(x).
\end{equation}
Again, we replace $U_{11+}(x)=c^{-N} P_{N,N}(cx+d)$ in order to deduce inner asymptotics for the MVOPs inside the interval $(-1,1)$, and away from the endpoints.
\subsection{Edge asymptotics}
If we consider $z$ inside $D_{\delta}(1)$ and inside the lens, we have
\begin{equation}
\begin{aligned}
U_{11}(z)
&=
\begin{pmatrix}
\ee^{\frac{N\ell}{2}}I_r & 0_r
\end{pmatrix}
R(z)P(z)
\begin{pmatrix} 
\ee^{N\left(g(z)-\frac{\ell}{2}\right)}I_r\\ 
\ee^{N\left(\phi(z)+g(z)-\frac{\ell}{2}\right)}M(z)^{-1}
\end{pmatrix}\\
&=
\ee^{Ng(z)}\\
&\times 
\left(R_{11}(z)P_{11}(z)+R_{12}(z)P_{21}(z)
+
\ee^{N\phi(z)}
\left(
R_{11}(z)P_{12}(z)+R_{12}(z)P_{22}(z)\right)M(z)^{-1}
\right)\\
&=
\ee^{N\left(g(z)+\frac{\phi(z)}{2}\right)}\\
&\times 
\left(\ee^{-\frac{N}{2}\phi(z)}\left(P_{11}(z)+\mathcal{O}(N^{-1})\right)+\ee^{\frac{N}{2}\phi(z)}\left(P_{12}(z)M(z)^{-1}
+\mathcal{O}(N^{-1})\right)\right).
\end{aligned}
\end{equation}
Now we need to calculate carefully, using \eqref{eq:P1P} and \eqref{eq:P1}:
\begin{equation}
\begin{aligned}
\ee^{-\frac{N}{2}\phi(z)}P_{11}(z)
&=
\ee^{-\frac{N}{2}\phi(z)}
\begin{pmatrix}
I_r & 0_r
\end{pmatrix}
E_N(z)\Psi(z)
\begin{pmatrix}
\ee^{\frac{N}{2}\phi(z)}Q(z)^{-1}\\ 0_r 
\end{pmatrix}\\
&=
(E_{11}(z)\Psi_{11}(z)+E_{12}(z)\Psi_{21}(z))
Q(z)^{-1}\\
\ee^{\frac{N}{2}\phi(z)}P_{12}(z)M(z)^{-1}
&=
\ee^{\frac{N}{2}\phi(z)}
\begin{pmatrix}
I_r & 0_r
\end{pmatrix}
E_N(z)\Psi(z)
\begin{pmatrix}
0_r\\ \ee^{-\frac{N}{2}\phi(z)}Q(\overline{z})^{\ast}
\end{pmatrix}M(z)^{-1}
\\
&=
(E_{11}(z)\Psi_{12}(z)+E_{12}(z)\Psi_{22}(z))
Q(z)^{-1},
\end{aligned}
\end{equation}
$M(z)^{-1}=\left(Q(z)Q(\overline{z})^{\ast}\right)^{-1}=Q(\overline{z})^{-\ast}Q(z)^{-1}$. Also, we have
\begin{equation}
\begin{aligned}
E_{11}(z)
&=
\begin{pmatrix} I_r & 0_r\end{pmatrix}
P^{(\infty)}(z)
\begin{pmatrix}
Q(z) & 0_r\\
0_r & Q(\overline{z})^{-\ast}
\end{pmatrix}
\frac{1}{\sqrt{2}}
\begin{pmatrix}
I_r & -\ii I_r\\
-\ii I_r & I_r
\end{pmatrix}
(N^{2/3}f(z))^{\sigma_3/4}
\begin{pmatrix} I_r \\ 0_r \end{pmatrix}\\
&=
\frac{D(\infty)(N^{2/3}f(z))^{1/4}}{\sqrt{2}}
\begin{pmatrix} \widetilde{P}^{(\infty)}_{11}(z) & \widetilde{P}^{(\infty)}_{12}(z) \end{pmatrix}
\begin{pmatrix} D(z)^{-1}Q(z) \\ -\ii D(\overline{z})^\ast Q(\overline{z})^{-\ast} \end{pmatrix}
\end{aligned}
\end{equation}
and
\begin{equation}
\begin{aligned}
E_{12}(z)
&=
\begin{pmatrix} I_r & 0_r\end{pmatrix}
P^{(\infty)}(z)
\begin{pmatrix}
Q(z) & 0_r \\
0_r & Q(\overline{z})^{-\ast}
\end{pmatrix}
\frac{1}{\sqrt{2}}
\begin{pmatrix}
I_r & -\ii I_r\\
-\ii I_r & I_r
\end{pmatrix}
(N^{2/3}f(z))^{\sigma_3/4}
\begin{pmatrix} 0_r \\ I_r \end{pmatrix}\\
&=
\frac{D(\infty) (N^{2/3}f(z))^{-1/4}}{\sqrt{2}}
\begin{pmatrix} 
\widetilde{P}^{(\infty)}_{11}(z) & \widetilde{P}^{(\infty)}_{12}(z)\end{pmatrix}
\begin{pmatrix}-\ii D(z)^{-1}Q(z) \\  D(\overline{z})^\ast Q(\overline{z})^{-\ast} \end{pmatrix}\\
\end{aligned}
\end{equation}
Because of this calculation, the leading term as $N\to\infty$ corresponds to terms with $E_{11}(z)$. We write
\begin{equation}
\begin{aligned}
&
\left(E_{11}(z)\Psi_{11}(z)+E_{12}(z)\Psi_{21}(z)+\mathcal{O}(N^{-1})\right)Q(z)^{-1}\\
&=
\frac{N^{1/6}f(z)^{1/4}}{\sqrt{2}}D(\infty)
\left[\left(\widetilde{P}^{(\infty)}_{11}(z)D(z)^{-1}-\ii \widetilde{P}^{(\infty)}_{12} D(\overline{z})^\ast Q(\overline{z})^{-\ast}Q(z)^{-1}\right)\Psi_{11}(z)+\mathcal{O}(N^{-1/3})\right]\\
\end{aligned}
\end{equation}
and
\begin{equation}
\begin{aligned}
&\left(E_{11}(z)\Psi_{12}(z)+E_{12}(z)\Psi_{22}(z)+\mathcal{O}(N^{-1})\right)Q(z)^{-1}\\
&=
\frac{N^{1/6}f(z)^{1/4}}{\sqrt{2}}D(\infty)
\left[\left(\widetilde{P}^{(\infty)}_{11}(z)D(z)^{-1}-\ii \widetilde{P}^{(\infty)}_{12} D(\overline{z})^\ast Q(\overline{z})^{-\ast}Q(z)^{-1}\right)\Psi_{12}(z)+\mathcal{O}(N^{-1/3})\right].
\end{aligned}
\end{equation}
Now we take the limit $z\to x$ to the interval from this side (positive) and we get
\begin{equation}
\begin{aligned}
\widetilde{P}^{(\infty)}_{11+}(x)D_+(x)^{-1}-\ii \widetilde{P}^{(\infty)}_{12+} D_-(x)^\ast Q(\overline{z})^{-\ast}Q(z)^{-1}
&=
\widetilde{P}^{(\infty)}_{11+}(x)D_+(x)^{-1}-\ii \widetilde{P}^{(\infty)}_{12+} D_-(x)^{-1}\\
&=
\frac{\sqrt{2}\, \ee^{-\frac{\pi\ii}{4}}}{(1-x^2)^{1/4}}
\textrm{Re}\left(
\ee^{\frac{\pi\ii}{4}-\frac{\ii}{2}\arcsin(x)}
D_+(x)^{-1}\right),
\end{aligned}
\end{equation}
where we have used that  $M(x)=Q(x)Q(x)^{\ast}=D_-(x)D_-(x)^\ast$ on the interval, and the fact that $\widetilde{P}^{(\infty)}_{11+}(x)=\overline{\widetilde{P}^{(\infty)}_{12+}(x)}$ again. 

Consequently
\begin{equation}
\begin{aligned}
U_{11+}(x)
&=
\ee^{N\left(g_+(x)+\frac{\phi_+(x)}{2}\right)}
\frac{N^{1/6}f(x)^{1/4}_{+}}{(1-x^2)^{1/4}}D(\infty)\\
&\times \left(
\left(\Psi_{11}(x)+\Psi_{12}(x)\right)
\textrm{Re}\left(
\ee^{\frac{\pi\ii}{4}-\frac{\ii}{2}\arcsin(x)}
D_+(x)^{-1}\right)
+\mathcal{O}(N^{-1/3})\right).
\end{aligned}
\end{equation}

In this region, we have 
\begin{equation}
\begin{aligned}
\Psi_{11}(x)+\Psi_{12}(x)
=
-\sqrt{2\pi}\left(y_2(N^{2/3}f(x))+y_1(N^{2/3}f(x))\right)
&=
\sqrt{2\pi} y_0(N^{2/3}f(x)) \\
&=
\sqrt{2\pi}\textrm{Ai}(N^{2/3}f(x)).
\end{aligned}
\end{equation}

This gives the last part of Theorem \ref{th:MVOPs}. We can use this estimate in order to approximate the extreme zeros of MVOPs, in terms of the zeros of the Airy function. We recall that the zeros of $\textrm{Ai}(\zeta)$ are located on the negative real axis, which is mapped to the real axis with $x<1$.

\section{Proof of Theorem \ref{th:BnCn}}
We begin with the coefficient $C_{N,N}=Y_{N,12}^{(1)}Y_{N,21}^{(1)}$. In the sequel, we omit the subscript $N$ in the matrices for simplicity of notation. From 
\eqref{eq:U1Y1}, we have 
\begin{equation}
C_{N,N}
=
c^2\,
U_{12}^{(1)}U_{21}^{(1)}
\end{equation}
Then, undoing the transformations outside of the lens, we obtain
\begin{equation}
\begin{aligned}
U_{12}(z)
U_{21}(z)
&=
\ee^{-N(g(z)-\ell)}
\left(R(z)P^{(\infty)}(z)\right)_{12}
\ee^{N(g(z)-\ell)}
\left(R(z)P^{(\infty)}(z)\right)_{21}\\
&=
\left(R(z)P^{(\infty)}(z)\right)_{12}\left(R(z)P^{(\infty)}(z)\right)_{21},
\end{aligned}
\end{equation}
and since $U_{12}^{(1)}U_{21}^{(1)}=\lim_{z\to\infty}\left(z U_{12}(z)\right)\left(z U_{21}(z)\right)$, it follows that 
\begin{equation}
\begin{aligned}
C_{N,N}
&=
c^2\,\lim_{z\to\infty}\left(R_{11}(z) zP_{12}^{(\infty)}(z)+zR_{12}(z)P_{22}^{(\infty)}(z)\right)
\left(zR_{21}(z)P_{11}^{(\infty)}(z)+R_{22}(z)zP_{21}^{(\infty)}(z)\right).
\end{aligned}
\end{equation}

We use that $R(z)=I_{2r}+\mathcal{O}(z^{-1})$ as $z\to\infty$, and 
\begin{equation}\label{eq:asympPinfty}
P^{(\infty)}(z)
=
\begin{pmatrix}
D(\infty) & 0_r\\
0_r & D(\infty)^{-\ast}
\end{pmatrix}
\widetilde{P}^{(\infty)}(z)
\begin{pmatrix}
D(z)^{-1} & 0_r\\
0_r & D(\overline{z})^{\ast},
\end{pmatrix}
\end{equation}
with 
\begin{equation}\label{eq:asympPtinfty}
\widetilde{P}^{(\infty)}(z)
=
I_{2r}
+\frac{\ii}{2z} 
\begin{pmatrix}
0_r & I_r\\
-I_r & 0_r
\end{pmatrix}
+
\frac{1}{8z^2}I_{2r}+\mathcal{O}(z^{-3}), \qquad z\to\infty,
\end{equation}
to obtain
\begin{equation}
\begin{aligned}
\lim_{z\to\infty} R_{11}(z) zP_{12}^{(\infty)}(z)
=
\frac{\ii}{2}D(\infty)D(\infty)^\ast,\quad
\lim_{z\to\infty} R_{22}(z)z P_{21}^{(\infty)}(z)
=
-\frac{\ii}{2}D(\infty)^{-\ast}D(\infty)^{-1}.
\end{aligned}
\end{equation}

Therefore, the recurrence coefficient $C_{N,N}$ satisfies 
\begin{equation}
\begin{aligned}
C_{N,N}
&=
c^2\, \lim_{z\to\infty}
\left[\left(\frac{\ii}{2} D(\infty)D(\infty)^\ast+z R_{12}(z)\right)
\left(-\frac{\ii}{2} D(\infty)^{-\ast}D(\infty)^{-1}+z R_{21}(z)\right)\right]\\
&=
\frac{c^2}{4}\, \lim_{z\to\infty}
\left[\left(
I_r-2\ii z R_{12}(z)D(\infty)^{-\ast}D(\infty)^{-1}\right)
\left(I_r+2\ii z D(\infty)D(\infty)^{\ast}R_{21}(z)\right)\right]\\
\end{aligned}
\end{equation}
We have
\begin{equation}
\lim_{z\to\infty}
\left(z R_{12}(z)\right)=R_{12}^{(1)}, \qquad
\lim_{z\to\infty}
\left(z R_{21}(z)\right)=R_{21}^{(1)},
\end{equation}
and then
\begin{equation}
\begin{aligned}
C_{N,N}
&=
\frac{c^2}{4}
\left[I_r
+2\ii
\left(D(\infty)D(\infty)^{\ast}R_{21}^{(1)}
-R_{12}^{(1)}D(\infty)^{-\ast}D(\infty)^{-1}\right)\right]\\
\end{aligned}
\end{equation}

The matrix $R^{(1)}(z)$ admits an asymptotic expansion in inverse powers of $N$ and using \eqref{eq:R1residues}, we obtain
\begin{equation}
\begin{aligned}
R_{12}^{(1)}
&=
\frac{\left(\Delta_{1,-1}\right)_{12}+\left(\widetilde{\Delta}_{1,-1}\right)_{12}}{N}+\mathcal{O}(N^{-2})\\
R_{21}^{(1)}
&=
\frac{\left(\Delta_{1,-1}\right)_{21}+\left(\widetilde{\Delta}_{1,-1}\right)_{21}}{N}+\mathcal{O}(N^{-2}).
\end{aligned}
\end{equation}
Using \eqref{eq:Delta1}, we can easily check that
\begin{equation}
    -\left(\Delta_{1,-1}\right)_{12}
    D(\infty)^{-\ast}D(\infty)^{-1}
    +
    D(\infty)D(\infty)^{\ast}\left(\Delta_{1,-1}\right)_{21}
    =
    -\frac{\ii\sqrt{2}}{4h(1)}D(\infty)L_1D(\infty)^{-1},
\end{equation}
and similarly, using \eqref{eq:Deltat1}
\begin{equation}
    -\left(\widetilde{\Delta}_{1,-1}\right)_{12}
    D(\infty)^{-\ast}D(\infty)^{-1}
    +
    D(\infty)D(\infty)^{\ast}\left(\widetilde{\Delta}_{1,-1}\right)_{21}
    =
    \frac{\ii\sqrt{2}}{4h(-1)}D(\infty)L_{-1}D(\infty)^{-1},
\end{equation}

As a consequence, assembling everything together, we arrive at
\begin{equation}
\begin{aligned}
C_{N,N}
&=
\frac{c^2}{4}I_r+
\frac{\sqrt{2} c^2}{8N}
D(\infty)
\left(\frac{L_1}{h(1)}-\frac{L_{-1}}{h(-1)}\right)
D(\infty)^{-1}
+\mathcal{O}(N^{-2})\\
&=
\frac{c^2}{4}
\left[I_r+
\frac{\sqrt{2}}{2N}
D(\infty)
\left(\frac{L_1}{h(1)}-\frac{L_{-1}}{h(-1)}\right)
D(\infty)^{-1}
+\mathcal{O}(N^{-2})
\right],
\end{aligned}
\end{equation}
as $N\to\infty$, which gives the leading terms $C^{(0)}$ and $C^{(1)}$ in \eqref{eq:asympBnCn_theorem}.

Regarding the coefficient $B_{N,N}$, we use \eqref{eq:U1Y1} for $Y^{(1)}$ and the corresponding formula for the next term $Y^{(2)}$: 
\begin{equation}
\begin{aligned}
    Y^{(1)}
    &=
    c
\begin{pmatrix}
c^{N}I_r & 0_r\\
0_r & c^{-N} I_r
\end{pmatrix}
U^{(1)}
\begin{pmatrix}
c^{-N}I_r & 0_r\\
0_r & c^{N} I_r
\end{pmatrix}
-
N d\begin{pmatrix} I_r & 0_r\\ 0_r & -I_r \end{pmatrix},\\
Y^{(2)}
    &=
    c^2
\begin{pmatrix}
c^{N}I_r & 0_r\\
0_r & c^{-N} I_r
\end{pmatrix}
U^{(2)}
\begin{pmatrix}
c^{-N}I_r & 0_r\\
0_r & c^{N} I_r
\end{pmatrix}
-N d Y^{(1)}\sigma_3\\
&+
d Y^{(1)}-\frac{d^2}{2}
\begin{pmatrix}
N(N-1)I_r & 0_r\\
0_r & N(N+1) I_r
\end{pmatrix}.
\end{aligned}
\end{equation}
As a consequence,
\begin{equation}
    \begin{aligned}
    Y^{(1)}_{11}
    &=
    c U^{(1)}_{11}-Nd\\
    \left(Y^{(1)}_{12}\right)^{-1}
    &=
    \left(c^{2N+1}U^{(1)}_{12}\right)^{-1}
    =
    c^{-2N-1}
    \left(U^{(1)}_{12}\right)^{-1}\\
    \left(Y_{N,12}^{(2)}\right)^\ast
    &=
\left(c^{2N+2}U_{12}^{(2)}+Nd Y^{(1)}_{12}+d Y_{12}^{(1)}\right)^\ast\\
 &=
    c^{2N+2}\left(U_{12}^{(2)}\right)^{\ast}
    +(N+1)d\left(Y_{12}^{(1)}\right)^\ast\\
    &=
    c^{2N+2}\left(U_{12}^{(2)}\right)^{\ast}
    +(N+1)c^{2N+1}d\left(U_{12}^{(1)}\right)^\ast.
    \end{aligned}
\end{equation}
It follows that
\begin{equation}\label{eq:BnN0}
\begin{aligned}
    B_{N,N}
   &=
   c U^{(1)}_{11}-Nd
   -
   \left[
   c^{2N+2}\left(U_{12}^{(2)}\right)^{\ast}
    +(N+1)dc^{2N+1}\left(U_{12}^{(1)}\right)^\ast
   \right]
   c^{-2N-1}
    \left(U^{(1)}_{12}\right)^{-1}\\
    &=
    d+c\left(U^{(1)}_{11}-\left(U_{12}^{(2)}\right)^{\ast}
   \left(U^{(1)}_{12}\right)^{-1}\right)
    -(N+1)d\left(I_r+\left(U_{12}^{(1)}\right)^\ast
    \left(U^{(1)}_{12}\right)^{-1}\right).
    \end{aligned}
\end{equation}

In the transformations outside the lens, we have \begin{equation}
\begin{aligned}
    U(z)
    =
    \ee^{\frac{N\ell}{2}\sigma_3} T(z)
    \ee^{N(g(z)-\frac{\ell}{2})\sigma_3}
    &=
    \ee^{\frac{N\ell}{2}\sigma_3} S(z)\ee^{N(g(z)-\frac{\ell}{2})\sigma_3}\\
    &=
    \ee^{\frac{N\ell}{2}\sigma_3} R(z)P^{(\infty)}(z)
    \ee^{N(g(z)-\frac{\ell}{2})\sigma_3}.
    \end{aligned}
\end{equation}
Next, we write
\begin{equation}
\begin{aligned}
&U(z)
\begin{pmatrix}
    z^{-N} I_r & 0_r\\
    0_r & z^{N}I_r
\end{pmatrix}\\
&=
\ee^{\frac{N\ell}{2}\sigma_3} 
\left(I_{2r}+\frac{R^{(1)}}{z}+\frac{R^{(2)}}{z^2}+\mathcal{O}(z^{-3})\right)
\left(I_{2r}+\frac{P^{(1)}}{z}+\frac{P^{(2)}}{z^2}+\mathcal{O}(z^{-3})\right)
\ee^{-\frac{N\ell}{2}\sigma_3}\\
&\times 
\left(I_{2r}+\frac{G^{(1)}}{z}+\frac{G^{(2)}}{z^2}+\mathcal{O}(z^{-3})\right).
\end{aligned}
\end{equation}
Here
\begin{equation}
\ee^{Ng(z)\sigma_3}
\begin{pmatrix}
    z^{-N} I_r & 0_r\\
    0_r & z^{N}I_r
\end{pmatrix}
=
I_{2r}+\frac{G^{(1)}}{z}+\frac{G^{(2)}}{z^2}+\mathcal{O}(z^{-3}),
\end{equation}
where
\begin{equation}
    G_1
    =
    -N\mu_1 
    \begin{pmatrix}
        I_r & 0_r\\
        0_r & -I_r
    \end{pmatrix}, \qquad \mu_1=\int s\psi(s)\dd s,
\end{equation}
since 
\[
g(z)=\int \log(z-s)\psi(s)\dd s=\log(z)-\frac{1}{z}\int s\psi(s)\dd s-\frac{1}{2z^2} \int s^2\psi(s)\dd s+\mathcal{O}(z^{-3})
\]
as $z\to\infty$. The matrices $G^{(j)}$ for $j\geq 2$ are more complicated, but they all share the same block diagonal structure. Therefore, the factor $\ee^{-\frac{N\ell}{2}\sigma_3}$ commutes with all these factors, and we obtain
\begin{equation}
\begin{aligned}
&U(z)\begin{pmatrix}
    z^{-N} I_r & 0_r\\
    0_r & z^{N}I_r
\end{pmatrix}\\
&=
\ee^{\frac{N\ell}{2}\sigma_3} 
\left[I_{2r}+
\frac{1}{z}
\left(R^{(1)}+P^{(1)}+G^{(1)}\right)\right.\\
&\left.
+
\frac{1}{z^2}
\left(R^{(2)}+P^{(2)}+G^{(2)}
+
R^{(1)}P^{(1)}
+
R^{(1)}G^{(1)}
+
P^{(1)}G^{(1)}
\right)
+\mathcal{O}(z^{-3})\right]
\ee^{-\frac{N\ell}{2}\sigma_3}.
\end{aligned}
\end{equation}

As a consequence,
\begin{equation}\label{eq:U1U2}
\begin{aligned}
    U^{(1)}
    &=
    \ee^{\frac{N\ell}{2}\sigma_3} 
    \left(R^{(1)}+P^{(1)}+G^{(1)}\right)
    \ee^{-\frac{N\ell}{2}\sigma_3} \\
    U^{(2)}
    &=
    \ee^{\frac{N\ell}{2}\sigma_3} 
    \left(R^{(2)}+P^{(2)}+G^{(2)}
+
R^{(1)}P^{(1)}
+
R^{(1)}G^{(1)}
+
P^{(1)}G^{(1)}
\right)
    \ee^{-\frac{N\ell}{2}\sigma_3} \\
\end{aligned}
\end{equation}

In particular, 
\begin{equation}
\begin{aligned}
    U_{11}^{(1)}
    &=
    R_{11}^{(1)}+P_{11}^{(1)}+G_{11}^{(1)}
    =
    R_{11}^{(1)}+P_{11}^{(1)}-N\mu_1 I_r\\
    U_{12}^{(1)}
    &=
    \ee^{N\ell}\left(R_{12}^{(1)}+P_{12}^{(1)}+G_{12}^{(1)}\right)
    =
    \ee^{N\ell}\left(R_{12}^{(1)}+P_{12}^{(1)}\right).
\end{aligned}
\end{equation}

Also, if we write 
$D(z)=D(\infty)+z^{-1}D^{(1)}+\mathcal{O}(z^{-2})$ 
as $z\to\infty$, and we use \eqref{eq:Pinfty}, we obtain
\begin{equation}\label{eq:P11P12}
    \begin{aligned}
        P^{(1)}_{11}
        =   
        -D^{(1)} D(\infty)^{-1},\qquad
        P^{(1)}_{12}
        =
        \frac{\ii}{2}D(\infty)D(\infty)^\ast, \qquad
        P^{(1)}_{22}
        =   
        D(\infty)^{-\ast}\left(D^{(1)}\right)^\ast.
    \end{aligned}
\end{equation}

As a consequence, in the last term in \eqref{eq:BnN0}, we can write
\begin{equation}
\begin{aligned}
    &I_r+
    \left(U_{12}^{(1)}\right)^\ast
    \left(U^{(1)}_{12}\right)^{-1}\\
    &=
    I_r+\left(R_{12}^{(1)}+\frac{\ii}{2}D(\infty)D(\infty)^\ast\right)^\ast
    \left(R_{12}^{(1)}+\frac{\ii}{2}D(\infty)D(\infty)^\ast R_{12}^{(1)}\right)^{-1}\\
    &=
    I_r-\left(I_r-2\ii D(\infty)^{-\ast}D(\infty)^{-1}R_{12}^{(1)}\right)^\ast
    \left(I_r-2\ii R_{12}^{(1)}
    D(\infty)^{-\ast}D(\infty)^{-1}\right)^{-1}\\
    &=
    I_r-\left(I_r+2\ii \left(R_{12}^{(1)}\right)^\ast
    D(\infty)^{-\ast}D(\infty)^{-1}\right)
    \left(I_r-2\ii R_{12}^{(1)}
    D(\infty)^{-\ast}D(\infty)^{-1}\right)^{-1}.
\end{aligned}
\end{equation}
We have $R_{12}^{(1)}=N^{-1}\left((\Delta_{1,-1})_{12}+\widetilde{\Delta}_{1,-1})_{12}\right)+\mathcal{O}(N^{-2})$ as $N\to\infty$. Also,  using \eqref{eq:Delta1}, \eqref{eq:Deltat1}, it turns out that the matrices $L_1$ and $L_{-1}$ are Hermitian, and we can check that in fact $R_{12}^{(1)}+\left(R_{12}^{(1)}\right)^\ast=\mathcal{O}(N^{-2})$, and so $I_r+\left(U_{12}^{(1)}\right)^\ast
    \left(U^{(1)}_{12}\right)^{-1}=\mathcal{O}(N^{-2})$, and the last term in \eqref{eq:BnN0} is 
\begin{equation}
\begin{aligned}   
    -(N+1)d\left(I_r+\left(U_{12}^{(1)}\right)^\ast
    \left(U^{(1)}_{12}\right)^{-1}\right)
    =\mathcal{O}(N^{-1}).
\end{aligned}
\end{equation}

It is quite natural to conjecture that this term is in fact identically $0_r$, which would be consistent with the result in the scalar case. The difficulty is that this requires knowledge of higher order terms in the expansion for $R_N(z)$, in particular symmetry of the $(1,2)$ block with respect to conjugation, which is not immediately clear from the analysis so far.


Now we study the term
$U^{(1)}_{11}-\left(U_{12}^{(2)}\right)^{\ast}
   \left(U^{(1)}_{12}\right)^{-1}$.
From \eqref{eq:U1U2}, we obtain
   \begin{equation}
   \begin{aligned}
        U^{(2)}_{12}
    &=
    \ee^{N\ell} 
    \left(R^{(2)}_{12}+P^{(2)}_{12}+G^{(2)}_{12}
+
\left(R^{(1)}P^{(1)})\right)_{12}
+
\left(R^{(1)}G^{(1)}\right)_{12}
+
\left(P^{(1)}G^{(1)}\right)_{12}\right)
\end{aligned}
\end{equation}   

   First, we observe that $G^{(2)}_{12}=0_r$, since this matrix is block diagonal. Also,
\begin{equation}
    \begin{aligned}
        P^{(2)}_{12}
        &=
        \frac{\ii}{2}D(\infty)\left(D^{(1)}\right)^\ast\\
        \left(R^{(1)}P^{(1)}\right)_{12}
        &=
        R^{(1)}_{11}P^{(1)}_{12}+R^{(1)}_{12}P^{(1)}_{22}
        =
        \frac{\ii}{2}R^{(1)}_{11}D(\infty)D(\infty)^\ast
        +
        R^{(1)}_{12}D(\infty)^{-\ast}\left(D^{(1)}\right)^\ast,\\
        \left(R^{(1)}G^{(1)}\right)_{12}
        &=
        R^{(1)}_{11}G^{(1)}_{12}+R^{(1)}_{12}G^{(1)}_{22}
        =
        N\mu_1  R^{(1)}_{12},\\
        \left(P^{(1)}G^{(1)}\right)_{12}
        &=
        \frac{\ii N}{2}\mu_1 D(\infty)D(\infty)^\ast.
    \end{aligned}
\end{equation}
We then have
\begin{equation}
    \begin{aligned}
        &U^{(1)}_{11}-\left(U_{12}^{(2)}\right)^{\ast}
   \left(U^{(1)}_{12}\right)^{-1}\\
   &=
   R^{(1)}_{11}\\
   &+
   \left(N\mu_1+D^{(1)}D(\infty)^{-1}\right)\\
   &\times 
   \left[-I_r+\left(I_r+2\ii \left(R^{(1)}_{12}\right)^\ast D(\infty)^{-\ast}D(\infty)^{-1}\right)\left(I_r-2\ii R^{(1)}_{12} D(\infty)^{-\ast}D(\infty)^{-1}\right)^{-1}\right]\\
   &+
   2\ii 
   \left(R_{12}^{(2)}\right)^\ast
   D(\infty)^{-\ast}D(\infty)^{-1}
   \left(I_r-2\ii R^{(1)}_{12}D(\infty)^{-\ast}D(\infty)^{-1}\right)^{-1}.
   \end{aligned}
\end{equation}
Furthermore,
\begin{equation}
    \begin{aligned}
&\left[-I_r+\left(I_r+2\ii \left(R^{(1)}_{12}\right)^\ast D(\infty)^{-\ast}D(\infty)^{-1}\right)\left(I_r-2\ii R^{(1)}_{12} D(\infty)^{-\ast}D(\infty)^{-1}\right)^{-1}\right]\\
&=
2\ii\left[R_{12}^{(1)}+\left(R_{12}^{(1)}\right)^\ast\right]D(\infty)^{-\ast}D(\infty)^{-1}+\ldots,
    \end{aligned}
\end{equation}
where by $\ldots$ we indicate terms that include the product of at least two blocks of the matrix $R^{(1)}$. Since these are of order $\mathcal{O}(N^{-1})$, and since $R_{12}^{(1)}+\left(R_{12}^{(1)}\right)^\ast=\mathcal{O}(N^{-2})$, we obtain
\begin{equation}
    \begin{aligned}
        U^{(1)}_{11}-\left(U_{12}^{(2)}\right)^{\ast}
   \left(U^{(1)}_{12}\right)^{-1}
   &=
   \mathcal{O}(N^{-1})+
   \left(N\mu_1+D^{(1)}D(\infty)^{-1}\right)\mathcal{O}(N^{-2})+\mathcal{O}(N^{-1})\\
   &=
   \mathcal{O}(N^{-1}).
   \end{aligned}
\end{equation}

Writing everything together, we obtain $B_{N,N}=dI_r+\mathcal{O}(N^{-1})$, which confirms \eqref{eq:asympBnCn_theorem}.

\section{Proof of Theorem \ref{th:Hn}}

The norm of the MVOPs can be written directly in terms of the entries of the solution to the Riemann--Hilbert problem. Namely, 
\begin{equation}
\mathcal{H}_{N,N}
=
-2\pi\ii Y_{N,12}^{(1)}
=
-2\pi\ii \lim_{z\to\infty}\left(z^{N+1} Y_{N,12}(z)\right).
\end{equation}
Then, undoing the transformations outside of the lens, we obtain
\begin{equation}
\begin{aligned}
Y_{N,12}(z)
=
c^{2N+1} \ee^{-N(g(z)-\ell)}\left(R_{11}(z)P^{(\infty)}_{12}(z)+R_{12}(z)P^{(\infty)}_{22}(z)\right)
\end{aligned}
\end{equation}

Therefore, using that $\ee^{ng(z)}=z^{n}\left(1+\mathcal{O}(z^{-2})\right)$ as $z\to\infty$, it follows that
\begin{equation}
\begin{aligned}
\mathcal{H}_{N,N}
&=
-2\pi\ii\,
c^{2N+1} \ee^{N\ell}
\lim_{z\to\infty}\left[z^{N+1}\ee^{-Ng(z)}\left(R_{11}(z)P^{(\infty)}_{12}(z)+R_{12}(z)P^{(\infty)}_{22}(z)\right)\right]\\
&=
-2\pi\ii\,
c^{2N+1} \ee^{N\ell}
\lim_{z\to\infty}\left[R_{11}(z)z P^{(\infty)}_{12}(z)+zR_{12}(z)P^{(\infty)}_{22}(z)\right]\\
\end{aligned}
\end{equation}
We have
\begin{equation}
    \lim_{z\to\infty}R_{11}(z)z P^{(\infty)}_{12}
    =
    \frac{\ii}{2}D(\infty)D(\infty)^\ast, \qquad
    \lim_{z\to\infty}z R_{12}(z) P^{(\infty)}_{22}
    =
    R_{12}^{(1)}.
\end{equation}
We use the fact that
\begin{equation}
R_{12}^{(1)}
=
\frac{\left(\Delta_{1,-1}\right)_{12}+\left(\widetilde{\Delta}_{1,-1}\right)_{12}}{N}+\mathcal{O}(N^{-2})
\end{equation}
as $N\to\infty$, together with the explicit formulas for $\Delta_{1,-1}$ and $\widetilde{\Delta}_{1,-1}$ in \eqref{eq:Delta1} and \eqref{eq:Deltat1}, and we arrive at \eqref{eq:asympHNN}, with an $\mathcal{O}(N^{-1})$ correction given by \eqref{eq:H1}.

\section{Conclusions and outlook}
\label{sec:conclusions}
We have presented asymptotic results for matrix orthogonal polynomials with exponential weights on the real line. One essential ingredient, that was already used in \cite{DKR2023}, is the matrix Szeg\H{o} function, that we can construct explicitly in the present work. This allows us to avoid the complications of eigenvalue/eigenvector decomposition of the weight and subsequent asymptotic analysis. Nevertheless, the two approaches should be compatible, and understanding this connection in detail remains as a possible topic for further research.

The general assumptions that we have made, in particular the fact that the matrix part of the weight is independent of the large parameter $N$, allows us to follow several of the steps in the nonlinear steepest descent for scalar OPs with minor changes. Other modifications pertaining to the scalar part of the weight (two-cut case, or cases of singular equilibrium measure) can in principle be extended to the matrix case by making the corresponding  changes in the analysis.


One remaining task that looks more substantial to the theory of MVOPs is how to incorporate a dependence on $N$ in the matrix part. In this sense, many of the examples in the literature do not appear easily tractable. For example, if we start with 
\begin{equation}
    W(x)=\ee^{-x^2}Q(x)Q(x)^\ast,
\end{equation}
and we introduce the scaling $x\mapsto \sqrt{N} x$, then we obtain
\begin{equation}
    W(x)=\ee^{-N x^2}Q(\sqrt{N}x)Q(\sqrt{N}x)^\ast.
\end{equation}

Treating the $N$ dependent matrix factor $Q(\sqrt{N}x)$ in an asymptotic analysis seems complicated in general, even in the case $Q(\sqrt{N}x)=\ee^{\sqrt{N}A x}$. 

In this direction, we highlight \cite{BCD2012} as source of additional examples. Unlike in our case, there the limit behavior of recurrence coefficients is not a multiple of the identity matrix, and this points at additional complications in the asymptotic analysis. It is not clear at this stage how (or if it is possible) to set up a suitable equilibrium problem without resorting to eigenvalues and eigenvectors. 

Another topic that we have not addressed here in detail is the behavior of the zeros of MVOPs, that are understood as zeros of $\det P_n(x)$. In the $2\times 2$ case, with $v(x)=x^2$, $Q(x)=\ee^{Ax}$ and $A$ of the form \eqref{eq:A}, we can calculate the leading term in the asymptotics inside the interval \eqref{eq:asymp_inner}: namely, we have 
\begin{equation}\label{eq:Dplus22}
\begin{aligned}
D_+(x)^{-1}
&=
\frac{1}{2\sqrt{a^2+4}}
\begin{pmatrix}
4+a^2(1+\varphi_-(x)^2) & -2a\varphi_-(x)\\
-2a\varphi_-(x) & 4
\end{pmatrix},
\end{aligned}
\end{equation}
which, using that $\varphi_-(x)=x-\ii\sqrt{1-x^2}$ for $x\in(-1,1)$, leads to 
\begin{equation}\label{eq:Dplus22ReIm}
\begin{aligned}
\textrm{Re}\left(D_+(x)^{-1}\right)
&=
\frac{1}{\sqrt{a^2+4}}
\begin{pmatrix}
a^2x^2+2 & -ax\\
-ax & 2
\end{pmatrix},\\
\textrm{Im}\left(D_+(x)^{-1}\right)
&=
\frac{a\sqrt{1-x^2}}{\sqrt{a^2+4}}
\begin{pmatrix}
-ax & 1\\
1 & 0
\end{pmatrix}.
\end{aligned}
\end{equation}

Using \eqref{eq:Dplus22ReIm}, we can write the determinant and simplify:
\begin{equation}\label{eq:detI0}
\det\left[\textrm{Re}\left(\ee^{\ii\psi(x)}D_+(x)^{-1}\right)\right]
=
\frac{4+a^2(2x^2-1)+(4+a^2)\cos(2\psi(x))}{2(4+a^2)},
\end{equation}
with $\psi(x)$ given by \eqref{eq:psi}. In Figures \ref{fig:inner2x2a} and \ref{fig:inner2x2b} we plot this function for different values of $N$. 

\begin{figure}[h!]
\centerline{
\includegraphics[scale=0.55]{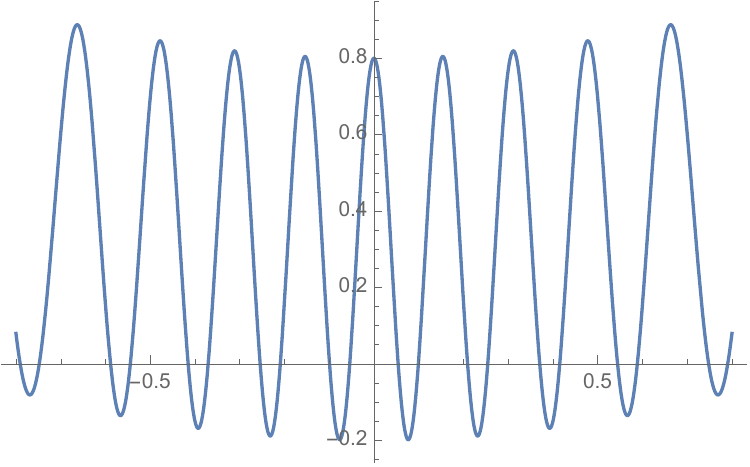}
\includegraphics[scale=0.55]{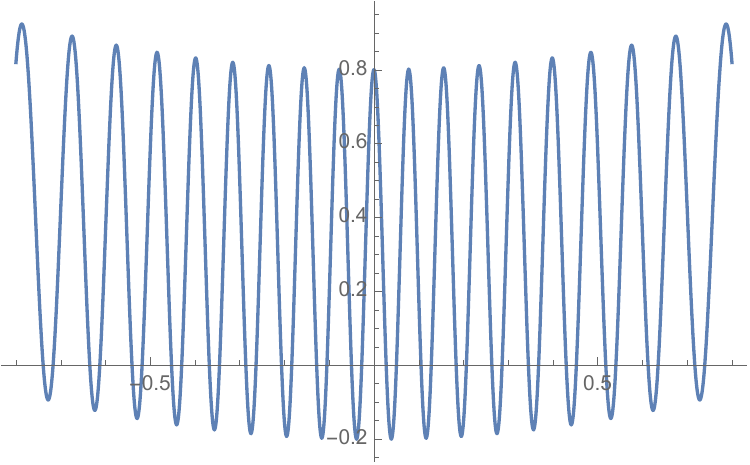}}
\caption{Leading term in the inner asymptotics in the $2\times 2 $ case, with $a=1$ and  $N=10$ (left) and $N=20$ (right).}
\label{fig:inner2x2a}
\end{figure}
\begin{figure}[h!]
\centerline{
\includegraphics[scale=0.55]{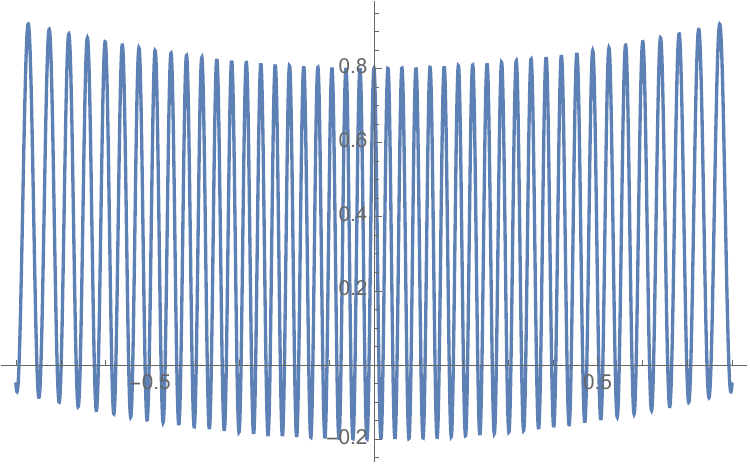}
\includegraphics[scale=0.55]{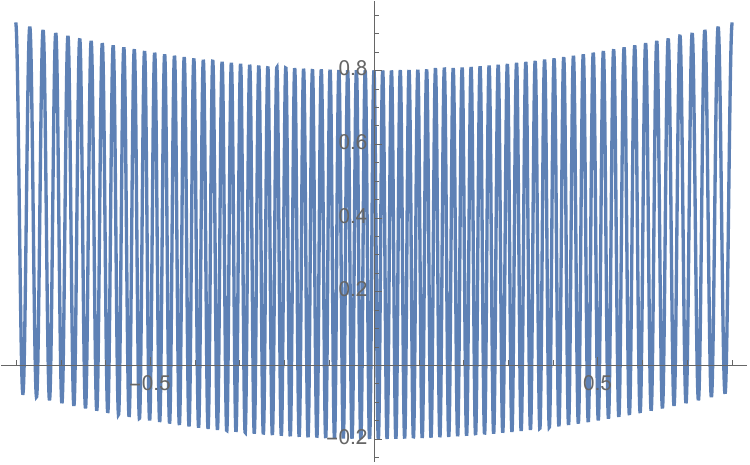}}
\caption{Leading term in the inner asymptotics in the $2\times 2 $ case, with $a=1$ and  $N=50$ (left) and $N=80$ (right).}
\label{fig:inner2x2b}
\end{figure}

In Figures \ref{fig:inner3x3a} and \ref{fig:inner3x3b} we plot the analogous leading term $\det\left[\textrm{Re}\left(\ee^{\ii\psi(x)}D_+(x)^{-1}\right)\right]$, with $v(x)=x^2$, $Q(x)=\ee^{Ax}$ and $A$ of the form \eqref{eq:A}, in the $3\times 3$ case, again for different values of $N$. It seems that the zeros of this determinant are naturally clustered in groups of $3$. A similar structure can be observed for higher values of $r$ (the size of the matrix part of the weight).

\begin{figure}[h!]
\centerline{
\includegraphics[scale=0.55]{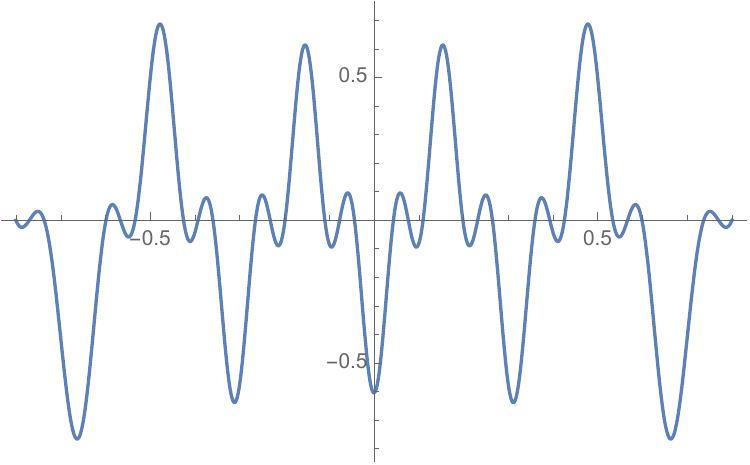}
\includegraphics[scale=0.55]{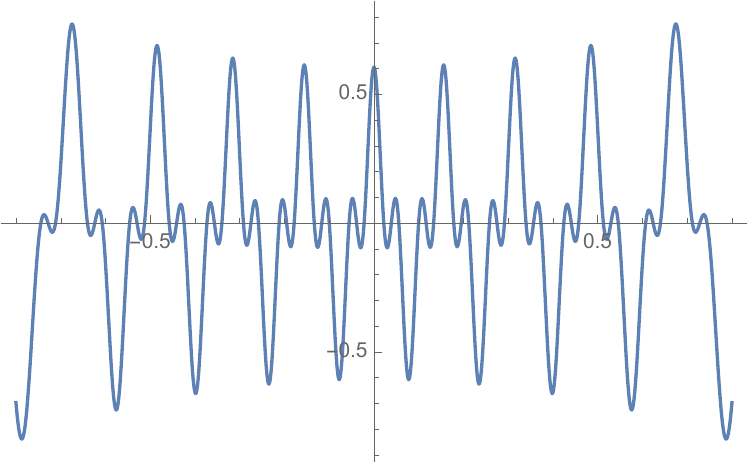}}
\caption{Leading term in the inner asymptotics in the $3\times 3 $ case, with $a=1$ and  $N=10$ (left) and $N=20$ (right).}
\label{fig:inner3x3a}
\end{figure}
\begin{figure}[h!]
\centerline{
\includegraphics[scale=0.55]{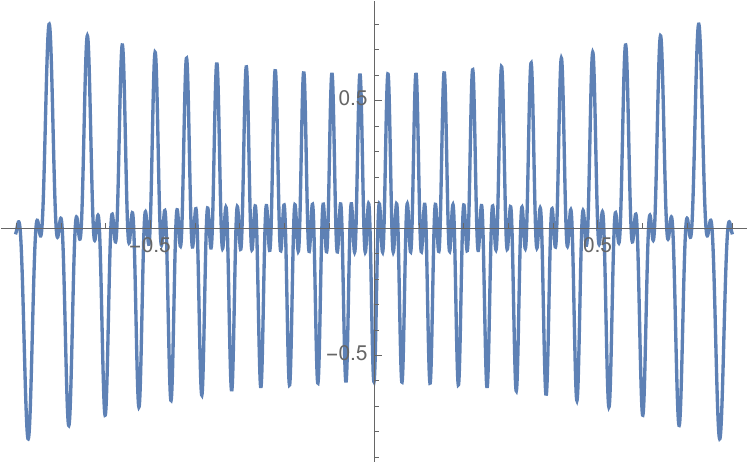}
\includegraphics[scale=0.55]{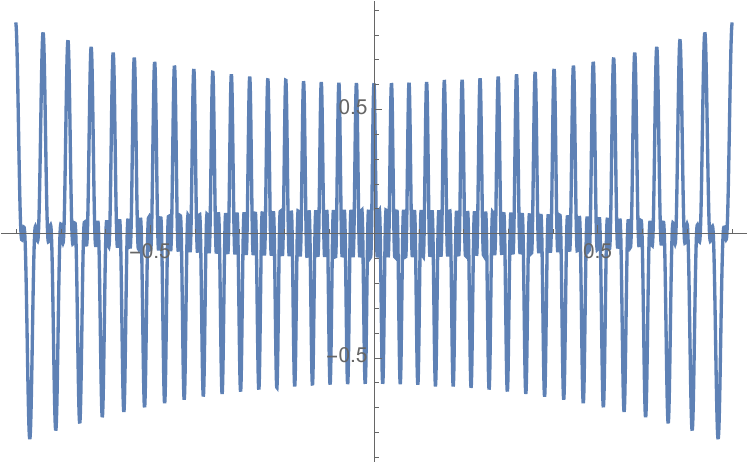}}
\caption{Leading term in the inner asymptotics in the $3\times 3 $ case, with $a=1$ and  $N=50$ (left) and $N=80$ (right).}
\label{fig:inner3x3b}
\end{figure}

\section*{Acknowledgements}
AD acknowledges financial support from Ministerio de Ciencia, Innovaci\'on y Universidades (Convocatoria de la Universidad Carlos III de Madrid de Ayudas para la recualificaci\'on del sistema universitario espa\~{n}ol para 2021-2023, de 1 de julio de 2021 en base al Real Decreto 289/2021, de 20 de abril de 2021), from Grant PID2021-123969NB-I00, funded by MCIN/AEI/ 10.13039/501100011033, from Grant PID2021-122154NB-I00 from Spanish Agencia Estatal de Investigaci\'on and from Grant PID2024-159024NB-C21 funded by
MICIU/AEI/10.13039/501100011033 and ERDF/EU.

The authors would like to thank the two anonymous referees for their comments, that have resulted in an improved version of the paper.

\bibliography{MatrixHermite}
\bibliographystyle{plain}
\end{document}